\documentclass{amsart}
\usepackage[a4paper,margin=1in]{geometry}

\usepackage{amsmath,amssymb}
\usepackage{amsthm}
\usepackage[abbrev]{amsrefs}
\usepackage{latexsym}
\usepackage[all,cmtip]{xy}
\usepackage{bm}
\usepackage{url}
\usepackage{tikz}
\usepackage{tikz-cd}

\numberwithin{equation}{section}

\newtheorem{thm}{Theorem}[section]
\newtheorem{prop}[thm]{Proposition}
\newtheorem{lem}[thm]{Lemma}
\newtheorem{cor}[thm]{Corollary}
\newtheorem{dfn}[thm]{Definition}
\theoremstyle{definition}
\newtheorem{ex}[thm]{Example}
\newtheorem{rem}[thm]{Remark}

\DeclareMathOperator{\A}{\mathcal{A}}
\DeclareMathOperator{\B}{\mathcal{B}}
\DeclareMathOperator{\D}{\mathcal{D}}
\DeclareMathOperator{\E}{\mathcal{E}}
\DeclareMathOperator{\M}{\mathcal{M}}
\DeclareMathOperator{\N}{\mathcal{N}}
\DeclareMathOperator{\K}{\mathcal{K}}
\DeclareMathOperator{\X}{\mathcal{X}}

\DeclareMathOperator{\Hom}{\mathcal{H}om}

\DeclareMathOperator{\str}{\mathcal{O}_\mathit{X}}
\DeclareMathOperator{\Sym}{Sym}

\begin{document}

\title{Koszul Duality for Symmetric Algebras and Derived Loop Spaces}
\author{Isshin Hisamichi}
\address{Mathematical Institute, Tohoku University, Sendai, Miyagi, 980-8578, Japan}
\email{hisamichi.isshin.p4@dc.tohoku.ac.jp}
\date{}{
\maketitle
\renewcommand{\thepage}{\arabic{page}}

\begin{abstract}
In this paper, we prove Koszul duality for symmetric algebras of connective complexes consisting of locally free sheaves of finite rank, which is not necessarily bounded below. As an application, we construct an equivalence between derived categories of derived loop spaces and the symmetric algebra of the tangent complex for arbitrary separated schemes of finite type over a field $k$ of characteristic zero. This can be regarded as a generalization of the classical duality between derived loop spaces and cotangent bundles by Kapranov in the smooth cases. We also give a geometrical interpretation of our equivalence in terms of derived formal stacks.
\end{abstract}

\tableofcontents

\section{Introduction}

Classically, Koszul duality is a well-known phenomenon which relates differtent two algebras. Typical example of this is the duality between the derived categories symmetric algebras and exterior algebras discovered by Bernshtein, Gel'fand and Gel'fand in \cite{BGG}. This kind of duality also appears in the context of a derived loop space, a derived algebraic analogue of free loop spaces. For a derived scheme $X$, the derived loop space $\mathcal{L}X$ of $X$ is defined to be $\operatorname{Map}(S^1,X)$ (in the sense of derived schemes). When $X$ is a smooth scheme of finite type over $k$ with $\operatorname{char}k=0$, $\mathcal{L}X$ is isomorphic to $\operatorname{Spec}_{\str} \Sym (\Omega_{X/k}[1])$ by Hochschild-Kostant-Rosenberg (HKR) isomorphism \cite{HKR}. Here, $\Sym(\Omega_{X/k}[1])$ is the exterior algebra over $\Omega_{X/k}$. From this, one can expect that there is a duality between sheaves on the derived loop space $\mathcal{L}X$, or dg-$\Sym(\Omega_{X/k}[1])$-modules, and sheaves on the cotangent bundle $T^*X$, or dg-$\Sym(\Omega_{X/k}^{\vee})$-modules. This is known to be true, which is essentially Kapranov's result \cite{Kap}.

We can obtain this Koszul duality for derived loop spaces by using Mirkovi\'{c} and Riche's Koszul duality for symmetric algebras of complexes consisting of locally free sheaves on a scheme \cite{MR2}. This result can be formulated as follows. Let $k$  be a field and let $X$ be a Noetherian scheme over $k$. Consider a complex
\begin{align*}
\X := (0 \to \E^{-n} \to \cdots \to \E^{-1} \to \E^0 \to 0)
\end{align*}
consisting of locally free sheaves of finite rank, where $\E_i$ has cohomological degree $i$. Then we take the symmetric algebra $\Sym \X$ of $\X$. This is a sheaf of dg-algebras on $X$. Besides, we define an additional grading on $\B$, other than cohomological one, by considering each $\E^i$ to have degree $1$. We call this internal grading. Then, the Koszul dual $\A$ of $\B$ is given by 
\begin{align*}
\A := \Sym(\X^{\vee}[-1]) = \Sym(0 \to (\E^0)^{\vee} \to (\E^{-1})^{\vee} \to \cdots \to (\E^{-n})^{\vee} \to 0),
\end{align*}
where $^\vee$ means dual as $\str$-modules, and each $(\E^{-i})^{\vee}$ has cohomological degree $i+1$ and internal degree $-1$. Under this setting, the Koszul duality is stated as follows:
\begin{thm} (Mirkovi\'{c}, Riche \cite{MR2}) \label{MRD}

There is an equivalence of triangulated categories
\begin{align*}
D_{gr}(\A \operatorname{-Mod}_-) \cong D_{gr}(\B \operatorname{-Mod}_-),
\end{align*}
where $\A \operatorname{-Mod}_-$ is the category of graded dg-$\A$-modules which is bounded above on its internal degree, and $D_{gr}(\A \operatorname{-Mod}_-)$ is its derived category.
\end{thm}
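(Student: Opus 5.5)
We would prove the equivalence by the classical Koszul-complex method, adapted to sheaves of graded dg-algebras. First we set up the two functors. Put $\K := \A \otimes_{\str} \B$, with the Koszul differential added to the internal differentials of $\A$ and $\B$. This differential is induced by the canonical element of $\X^{\vee}\otimes\X$, i.e.\ by the identity of $\X$: locally it is $\sum_j x_j^{\vee}\otimes x_j$ for dual local frames of the $\E^i$ and $(\E^{i})^{\vee}$. Since the $\E^i$ are locally free of finite rank, this element is globally well defined and squares to zero, because the relevant generators anticommute in $\A\otimes\B$. We then define
\[
\mathcal{F} : \A\operatorname{-Mod}_- \to \B\operatorname{-Mod}_-, \qquad M \mapsto \B \otimes_{\str} M \ \text{(with twisted differential)},
\]
and a right adjoint $\mathcal{G}$ given by $N \mapsto \A \otimes_{\str} N$, twisted in the same way. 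Both the totalizations and the graded pieces are taken internal-degree-wise. The boundedness condition $\operatorname{Mod}_-$ (bounded above in internal degree) is exactly what guarantees this: for fixed internal degree, each graded piece of the twisted tensor product is a \emph{finite} sum. Indeed, $\A$ lives in internal degrees $\le 0$ and $\B$ in $\ge 0$, so for fixed total internal degree $m$ and $M$ bounded above by $d$ only the finitely many pieces $\B^{j}\otimes M_{m-j}$ with $0\le j\le m-d$ contribute. This is also what makes the functors preserve quasi-isomorphisms, by a finite filtration argument degree-wise. Hence they descend to $D_{gr}$.

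Next we construct the unit and counit, $M \to \mathcal{G}\mathcal{F}M$ and $\mathcal{F}\mathcal{G}N \to N$. The composite $\mathcal{G}\mathcal{F}M = \A\otimes\B\otimes M$ is $M$ tensored with the Koszul complex $\K$ (twisted). We show that the augmentation $\K \to \str$ is a quasi-isomorphism. This is local on $X$, so we may assume the $\E^i$ are free, and then argue on the associated graded for the filtration by the internal differential of $\X$. After forgetting the differential $\X$ is a direct sum of shifted free modules, and $\Sym(V)\otimes\Sym(V^{\vee}[-1])$ with Koszul differential is acyclic in positive polynomial degree. In characteristic zero this follows from the Euler/de Rham homotopy $\Sym^p$ contraction divided by $p$. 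In general characteristic we instead use the standard exactness of Koszul complexes for free modules, handling even and odd degrees separately, which works because the Koszul complex for a single generator is exact. A spectral sequence argument, convergent because everything is finite in each internal degree, then gives the claim for the twisted version.

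Finally, we deduce that the unit and counit are quasi-isomorphisms by filtering $M$ (resp.\ $N$) by internal degree. Since modules are bounded above in internal degree, $M$ is an inverse/direct limit of its truncations, and in each fixed internal degree only finitely many steps matter. It therefore suffices to treat modules concentrated in a single internal degree, i.e.\ $\str$-complexes with trivial $\A$-action, and there the statement reduces to acyclicity of $\K$ tensored with a complex. We expect this reduction to be the main obstacle, because $\X$ is not assumed bounded below in the internal-degree sense relevant to the cohomological direction. The worry is that one must control convergence cohomologically as well, not just internally. The first thing to try is to use that for fixed internal degree $m$, $\Sym^m\X$ is a bounded complex, since $\X$ has finite length $n$. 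If that holds, every graded piece involved is a finite complex of finite sums and the filtration arguments are finite, so no completion issues arise.
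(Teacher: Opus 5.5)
Your proposal has a genuine gap. The two functors are mismatched, and the complex $\K=\A\otimes^K\B$ on which the argument rests is not a resolution of $\str$.

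First, $\mathcal{F}(M)=\B\otimes^K M$ does not land in $\B\operatorname{-Mod}_-$. Here $\B$ sits in internal degrees $\ge 0$. So if $M_j=0$ for $j>d$, the piece $\B_j\otimes M_{m-j}$ can be nonzero only for $j\ge m-d$; your inequality $j\le m-d$ is reversed. Infinitely many $j$ contribute, and the result is unbounded above: already $\mathcal{F}(\str)=\B$.

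Second, the claimed quasi-isomorphism between $\A\otimes^K\B$ and $\str$ is false. Take $X=\operatorname{Spec}k$ and $\X=k$ in degree $0$. Then $\B=k[x]$ and $\A=k[\xi]/(\xi^2)$ with $\xi$ of bidegree $(1,-1)$. The complex $\A\otimes^K\B$ is $k[x]\xrightarrow{\ \cdot x\ }\xi\, k[x]$, whose cohomology is $k\xi$ in bidegree $(1,-1)$. More generally, for $\X=V$ in degree $0$ and characteristic $0$, your Euler homotopy gives $dh+hd=\operatorname{rk}V+\deg_x-\deg_\xi$. This kills everything except $\wedge^{\mathrm{top}}V^\vee$ in cohomological degree $\operatorname{rk}V$. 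The Koszul complex that resolves $\str$ is $\B\otimes^K\A^\vee$ (e.g.\ $\Sym V\otimes\wedge V$), not $\B\otimes^K\A$. Consequently, already for $M=\str$ you get $\mathcal{G}\mathcal{F}(\str)=\A\otimes^K\B\not\simeq\str$, so the unit cannot be a quasi-isomorphism. Also, $\A\otimes^K-$ is not right adjoint to the free functor $\B\otimes^K-$; that right adjoint is the cofree functor $\A^\vee\otimes^K-$.

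The missing idea is that each algebra must be paired with the graded $\str$-dual of the other. Here $\A$ lives in internal degrees $\le 0$ and $\B$ in $\ge 0$. For modules bounded above in internal degree, the right functors are $F(\M)=\B^\vee\otimes^K\M\cong\K_{\A}\otimes_{\A}\M$ and $G(\N)=\A\otimes^K\N\cong\Hom_{\B}(\K_{\B},\N)$. Your $\mathcal{G}$ is this $G$, but it is the \emph{left} adjoint of $F$ (Lemma~\ref{lem:adj}).

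Since $\B^\vee$ and $\A$ both live in internal degrees $\le 0$, the degreewise sums really are finite. The composites are then governed by $\K_{\A}=\A\otimes^K\B^\vee\to\str$ and $\str\to\K_{\B}=\B\otimes^K\A^\vee$, which are the quasi-isomorphisms of Proposition~\ref{qis1}. In positive characteristic the duals involve divided powers, which is what makes the one-generator pieces exact.

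The pair $\B\otimes^K-$, $\A^\vee\otimes^K-$ used by Mirkovi\'{c} and Riche is adjoint in your direction. With these grading conventions, however, it is the right pair for modules bounded \emph{below} in internal degree.

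Once the functors are corrected, your reduction via the internal-degree filtration to modules with trivial action should go through. Everything then comes down to $\K_{\A}$ and $\K_{\B}$ resolving $\str$, with internal-degree pieces that are bounded complexes of locally free sheaves. This is a more hands-on route than the one the paper uses for its generalization, which goes through stalkwise K-projectivity of $\K_{\A}$ and $\K_{\B}$, $\operatorname{REnd}_{\B}(\K_{\A})\simeq\A$, ind-completion, and Lemma~\ref{lem:sur}.
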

If we take $\X=\Omega_{X/k}[1]$ in Theorem \ref{MRD}, then we obtain the following Koszul duality equivalence
\begin{align*}
D_{gr}(\Sym(\Omega_{X/k}[1])\operatorname{-Mod}_-) \cong D_{gr}(\Sym (\Omega_{X/k}^\vee[-2])\operatorname{-Mod}_-).
\end{align*}

Let us consider generalizing this equivalence to arbitrary separated schemes of finite type over $k$ with $\operatorname{char}k=0$. First, the HKR-isomorphism is generalized to the case when $X$ is not smooth by Ben-Zvi and Nadler \cite{BZ}. The derived loop space $\mathcal{L}X$ can be described by the cotangent complex $\mathbb{L}_{X/k}$ of $X$, which is a generalization of the K\"{a}hler differentials. More explicitly, we have an isomorphism $\mathcal{L}X \cong \operatorname{Spec} \Sym (\mathbb{L}_{X/k}[1])$ when $X$ is a separated, quasi-compact scheme over $k$. From this, if we define $\mathbb{T}_{X/k}:=\Hom_{\str}(\mathbb{L}_{X/k},\str)$, one can expect that there is duality between the derived category of the loop space $D(\mathcal{L}X) = D(\Sym (\mathbb{L}_{X/k}[1]))$ and the derived category of dg-$\Sym \mathbb{T}_{X/k}$-modules $D(\Sym \mathbb{T}_{X/k})$ (with some boundedness conditions). However, most of the cases of this duality cannot be deduced from Mirkovi\'c and Riche's Koszul duality due to the unboundedness of $\mathbb{L}_{X/k}$. Although $\mathbb{L}_{X/k}$ is always connective (that is, concentrated in non-positive degrees), when $X$ is not smooth, the cotangent complex is not bounded below in general. Indeed, Avramov \cite{Avr} proved that if the cotangent complex is bounded in cohomologies, then $X$ is locally complete intesection, so the boundedness of $\mathbb{L}_{X/k}$ is too restrictive.

To solve this problem, we extend Mirkovi\'{c} and Riche's result to more general situations. More specifically, we aim to give the following two directions of generalizations: 
\begin{itemize}
\item The ungraded context.
\item The case where the complex is not necessarily bounded below.
\end{itemize} 
In this paper, we prove the following generalization of Theorem \ref{MRD}.
\begin{thm} (cf. Theorem \ref{thm:main})

Let $X$ be a Noetherian scheme over $k$, and let $\mathcal{X} = (\cdots \to \mathcal{E}^{-n} \to \cdots \to \mathcal{E}^{-1} \to \mathcal{E}^0 \to 0$) be a complex (of possibly infinite length) of locally free sheaves of finite rank over X, where $\mathcal{E}_i$ has cohomological degree $i$. We define $\B:=\Sym \X$ and $\A :=\Sym (\X^{\vee}[-1])$. Then there is an equivalence of triangulated categories
\begin{align*}
D^{+}_{gr}(\mathcal{A}) \cong D^{+}_{gr}(\mathcal{B}).
\end{align*} 
If moreover $\E_0 = 0$, we also have an equivalence
\begin{align*}
D^{+}(\mathcal{A}) \cong D^{+}(\mathcal{B}).
\end{align*} 
\end{thm}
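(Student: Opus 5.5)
We will construct the equivalence by a Koszul-complex (BGG-type) adjunction along the lines of Mirkovi\'c--Riche, keeping track of boundedness carefully. The Koszul dual pair comes with the Koszul complex
\begin{align*}
\K := \A \otimes_{\str} \B, \qquad d_{\K} = d_{\A}\otimes 1 + 1\otimes d_{\B} + \delta,
\end{align*}
where $\delta$ is the canonical Koszul differential induced by the identity element of $\X^{\vee}\otimes\X$, namely $\sum_i e_i^{\vee}\otimes e_i$ taken locally in dual bases. Although $\X$ is unbounded, each term $\E^{-n}$ has finite rank and $\X^{\vee}[-1]$ is concentrated in degrees $\geq 1$, so this sum is locally finite in each bidegree.

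The functors are
\begin{align*}
\mathsf{A}(M) = \A\otimes_{\str} M \ \ (\text{twisted by }\delta), \qquad \mathsf{B}(N) = \B\otimes_{\str} N \ \ (\text{twisted}).
\end{align*}
Here $\mathsf{A}$ sends $\B$-modules to $\A$-modules and $\mathsf{B}$ sends $\A$-modules to $\B$-modules. Equivalently, one of the two can be written as $\Hom_{\B}(\K,-)$ with the appropriate duality. For $\B$-modules we use K-injective (or flat) resolutions that are bounded below, which exist in $D^+$. We first check three things:
\begin{itemize}
\item these twisted tensor products are well-defined complexes when $M$ or $N$ is bounded below in cohomological degree;
\item the functors are exact, so they descend to the derived categories $D^+_{gr}$;
\item they form an adjoint pair.
\end{itemize}

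The key step is to show that the unit and counit are quasi-isomorphisms. For this we filter by internal degree combined with cohomological degree. The associated graded of $\mathsf{B}\mathsf{A}(M)\to M$ reduces to $M\otimes_{\str}\K_0\to M$, where $\K_0$ is the Koszul complex with $d_{\A}=d_{\B}=0$. This is the classical Koszul complex of the graded $\str$-module $\X$ with its Koszul dual, and in characteristic zero (or in general, by the standard local argument) it is a resolution of $\str$.

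The main obstacle is the convergence of this spectral sequence, or filtration argument, when $\X$ is not bounded below. We would try the following. In a fixed internal degree $m$, $\Sym^m\X$ only involves finitely many $\E^{-i}$ in each cohomological degree. Moreover, since $\X^{\vee}[-1]$ lives in degrees $\geq 1$, $\Sym^m(\X^{\vee}[-1])$ lives in degrees $\geq m$. Hence for bounded-below $M$ the filtration is finite in each cohomological degree and each internal degree. Complete exhaustive arguments (Mittag-Leffler or $\lim^1$ vanishing) then give that the unit and counit are quasi-isomorphisms on $D^+_{gr}$.

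For the ungraded statement with $\E^0=0$, the same degree counting works without the internal grading. $\X$ is then concentrated in degrees $\leq -1$, and $\X^{\vee}[-1]$ in degrees $\geq 2$. Consequently the contribution of $\Sym^m$ to any fixed cohomological degree of $\K\otimes M$ is nonzero for only finitely many $m$ when $M$ is bounded below. This replaces the internal grading in the convergence argument and yields $D^+(\A)\cong D^+(\B)$.
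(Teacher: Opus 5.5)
You have a genuine gap: the functors you write down are not defined when $\X$ has infinite length, and the two you pair are not adjoint.

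On $\A\otimes_{\str}\B$ the Koszul term $\delta(1\otimes1)=\sum_\alpha x_\alpha^*\otimes x_\alpha$ has a nonzero component in $\A^{n+1}\otimes\B^{-n}$ for every $n$ with $\E^{-n}\neq0$. Finiteness in each bidegree does not help. All these components have total degree $1$, and their infinite sum is not an element of the direct-sum tensor product. This is the failure of $\Hom_{\str}(\X,\X)\cong\X\otimes\X^\vee$ for unbounded $\X$. Likewise $\mathsf{B}(N)=\B\otimes^K N$ needs $x_\alpha^*n=0$ for $|x_\alpha^*|\gg0$. That holds for $N$ bounded \emph{above}, not bounded below (take $N=\A$).

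For adjointness, note that graded $\A$-maps $\A\otimes_{\str}M\to N$ are $\str$-maps $M\to N$. These are $\B$-maps $M\to\Hom_{\str}(\B,N)\cong\B^\vee\otimes_{\str}N$. So the right adjoint of $\A\otimes^K-$ is $\B^\vee\otimes^K-$. Your $\B\otimes^K-$ is instead left adjoint to $\A^\vee\otimes^K-$ (the Mirkovi\'c--Riche pair), which need not preserve bounded-below objects.

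The paper's key move is to dualize one side of each Koszul complex: $\K_\A=\A\otimes^K\B^\vee$ and $\K_\B=\B\otimes^K\A^\vee$. Here $\otimes^K$ is defined only for $\B$-modules in $C^+$ or $\A$-modules in $C^-$, using the truncations $\X^{(N-|m|)}$, so every Koszul sum is finite. Then $F=\K_\A\otimes_\A-\cong\B^\vee\otimes^K-$ and $G=\Hom_\B(\K_\B,-)\cong\A\otimes^K-$ satisfy $G\dashv F$ on bounded-below complexes (Lemma \ref{lem:adj}). They descend to derived categories because $\K_\A$ and $\K_\B$ are stalkwise K-projective via explicit split filtrations. ``Exact'' is not automatic for twisted tensor products.

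Your convergence count fails for the same reason. In $\B\otimes\A\otimes M$ a fixed cohomological degree receives infinitely many summands, since arbitrarily negative degrees of $\B$ are compensated by $\A$ or $M$, whether or not $\E^0=0$. The count does work for the corrected objects when $\E^0=0$. Then $(\Sym^m\X)^\vee$ lies in degrees $\geq m$ and $\Sym^m(\X^\vee[-1])$ in degrees $\geq 2m$, so $\K_{\A,-s}$ lies in degrees $\geq s$. The filtration of $FG(\M)$ by internal degree of $\K_\A$ is therefore finite in each degree (Lemma \ref{lem:sur}). Its pieces $\K_{\A,-s}\otimes\M$ are acyclic for $s\neq0$ because the \emph{full} Koszul complex, with $d_\A,d_\B$ on, is acyclic in nonzero internal degree; this comes from reducing to $\X^{(n)}$ and applying Mirkovi\'c--Riche. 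Your reduction to $\K_0$ does not arise from a filtration by internal and cohomological degree, because $d_\X$ has the same bidegree $(1,0)$ as the Koszul differential.

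With these corrections, your plan of checking unit and counit directly is viable for $\E^0=0$. It would replace the paper's full-faithfulness argument via $\operatorname{REnd}_\B(\K_\A)\simeq\A$ and ind-completion. For the counit, filter $\A\otimes^K\B^\vee\otimes^K\N$ exhaustively by total polynomial degree; the pieces are again $\K_{\A,-s}\otimes\N$.

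No such count exists in the graded case with $\E^0\neq0$. There $\Sym^m\E^0$ sits in degree $0$ for every $m$, and bounded-below graded modules can have unbounded internal degree. Take $X=\operatorname{Spec}k$ and $\X=k$ in degree $0$. Then $G(\B)\simeq k$ in bidegree $(1,-1)$, so $FG(\B)\simeq\B^\vee$ moved into cohomological degree $1$, and the unit at $\B$ is not a quasi-isomorphism. That case needs an internal-degree bound as in Mirkovi\'c--Riche. The column spectral sequence in Lemma \ref{lem:sur} has the same convergence problem there.
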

In the statement above, the category $D^{+}(\mathcal{A})$ (resp. $D^{+}_{gr}(\mathcal{A})$ ) denotes the derived category of bounded-below (graded) dg-$\A$-modules.

The proof is done by modifying Mirokovi\'{c} and Riche's argument. First assume that the length of $\X$ is finite. Then, the most critical part of the proof is the construction of functors. In \cite{MR2}, the functors are given by
\begin{align}
D_{gr}(\A-\operatorname{Mod}_-) \to D_{gr}(\B-\operatorname{Mod}_-) ,\  &\M \mapsto \B \otimes_{\str} \M, \\
D_{gr}(\B-\operatorname{Mod}_-) \to D_{gr}(\A-\operatorname{Mod}_-) ,\  &\N \mapsto \A^{\vee} \otimes_{\str} \N,
\end{align}
where the tensor products involves some differentials other than the usual differential of tensor products, called the Koszul differential. To prove that these functors are well-defined, \cite{MR2} uses the internal grading in a critical way, so the proof in \cite{MR2} does not work in the ungraded context. 

To modify this,  we rewrite the functors in the follwing way:
\begin{align}
D^{+}(\mathcal{A}) \to D^{+}(\mathcal{B}),\  &\M \mapsto \K_{\A} \otimes_{\A} \M, \\
D^{+}(\mathcal{B}) \to D^{+}(\mathcal{A}),\  &\N \mapsto \Hom_{\B}(\K_{\B},\N),
\end{align}
where $\K_{\A}$ and $\K_{\B}$ are $(\A,\B)$-bimodule which give ``good'' resolutions of $\str$. Some calculations show that these functors coincide with the original ones (1.1) and (1.2). Then, to check that the funtors (1.3) and (1.4) are well-defined, it is enough to show that $\K_{\A}$ and $\K_{\B}$ are K-projective dg-modules. To prove the K-projectivity of them, we need to show that $\K_{\A}$ and $\K_{\B}$ have filtrations whose successive quotients are free, which can be constructed explicitly by studying the structures of $\K_{\A}$ and $\K_{\B}$.

On the other hand, when the length of $\X$ is infinite, we cannot even define the functors by the method of Mirkovi\'{c} and Riche \cite{MR2}. Specifically, we cannot define the Koszul differentials involved in (1.1) and (1.2). Howerver, we can naturally extend the definition of the Koszul differentials to the case when the length of $\X$ is infinite, by considering an appropriate truncation of $\X$. Then we can prove the equivalence using functors defined similarly as above (1.3), (1.4).

As a consequence, we will apply Theorem 1.2 to obtain the Koszul duality for derived loop spaces of arbitrary schemes of finite type over $k$. Since Theorem 1.2 does not require $\mathbb{L}_{X/k}$ to be bounded below, we can deduce the following Koszul duality for general schemes of finite type over $k$.
\begin{thm} (cf. Theorem \ref{thm:loop})

Let $k$ be a field of characteristic $0$. Let $X$ be a separated scheme of finite type over $k$. Let $\mathbb{L}_{X/k}$ be the cotangent complex of $X$ over $k$, and we set $\mathbb{T}_{X/k} :=\mathbb{R} \! \Hom_{\str}(\mathbb{L}_{X/k},\str)$. There is an equivalence of triangulated categories
\begin{align*}
D^+(\mathcal{L}X) = D^+(\Sym (\mathbb{L}_{X/k}[1])) \cong D^+(\Sym (\mathbb{T}_{X/k}[-2])).
\end{align*}
Moreover, if we consider $\Sym (\mathbb{L}_{X/k}[1])$ as a sheaf of graded dg-algebras by considering the sections of  $\mathbb{L}_{X/k}[1]$ to have internal degree $1$, then there is an equivalence of triangulated categories
\begin{align*}
D_{gr}^+(\mathcal{L}X) \cong D_{gr}^+(\Sym (\mathbb{T}_{X/k}[-2])) \cong D_{gr}^{\searrow}(\Sym \mathbb{T}_{X/k}).
\end{align*}
\end{thm}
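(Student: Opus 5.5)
The plan is to deduce the statement from Theorem 1.2 applied to a suitable model of the cotangent complex, and then to identify the two sides with the categories in the statement. Since $X$ is separated of finite type over $k$, it embeds locally into affine spaces. I would use this to produce a complex $\X = (\cdots \to \E^{-n} \to \cdots \to \E^{-1} \to \E^0 \to 0)$ of locally free sheaves of finite rank representing $\mathbb{L}_{X/k}$. On an affine piece one can take a simplicial polynomial resolution with finitely many generators in each degree. Globally the intended route is to resolve a coherent complex with coherent cohomology, bounded above, by locally free sheaves of finite rank. This is possible because $X$ is Noetherian and separated; I would assume $X$ has enough locally frees, or work with a quasi-projective cover and glue, if the resolution property is needed. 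Connectivity of $\mathbb{L}_{X/k}$ lets us choose $\E^i=0$ for $i>0$.

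Next I would set $\X' := \X[1]$, so that $\Sym \X' \simeq \Sym(\mathbb{L}_{X/k}[1])$. The right-hand side is quasi-isomorphic to $\mathcal{O}_{\mathcal{L}X}$ by the Ben-Zvi--Nadler HKR theorem, using characteristic $0$ so that $\Sym$ preserves quasi-isomorphisms. Now $\X'$ is concentrated in degrees $\le -1$, so its degree-$0$ term vanishes, and the ungraded part of Theorem 1.2 applies. This gives
\[
D^+(\Sym \X') \cong D^+(\Sym(\X'^{\vee}[-1])).
\]
Here $\X'^{\vee}[-1] = \X^{\vee}[-2]$. Since $\X$ consists of locally free sheaves of finite rank, $\X^{\vee}$ computes $\mathbb{R}\Hom_{\str}(\mathbb{L}_{X/k},\str)=\mathbb{T}_{X/k}$. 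One also has to check that $D^+$ of a sheaf of dg-algebras is invariant under quasi-isomorphism of the algebra, namely that restriction and extension of scalars give inverse equivalences preserving bounded-below objects. For the $\Sym$ algebras I would prove this by flatness over $\str$ in characteristic $0$.

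For the graded statement I would run the same argument with the graded version of Theorem 1.2, giving sections of $\X'$ internal degree $1$. This yields $D^+_{gr}(\mathcal{L}X)\cong D^+_{gr}(\Sym(\mathbb{T}_{X/k}[-2]))$. The last equivalence $D^+_{gr}(\Sym(\mathbb{T}_{X/k}[-2]))\cong D^{\searrow}_{gr}(\Sym \mathbb{T}_{X/k})$ is a regrading (``shearing'') trick. A graded module $M=\bigoplus_j M_j$ over $\Sym(\mathbb{T}[-2])$, with generators in internal degree $-1$, is sent to $\bigoplus_j M_j[2j]$, which is a module over $\Sym\mathbb{T}$. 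This functor is an isomorphism of abelian categories of graded dg-modules compatible with quasi-isomorphisms. The point is to check that the boundedness condition $+$ is transported exactly to the condition $\searrow$; I would take this as the definition or characterisation of $\searrow$, namely cohomological degree bounded below by a bound that shifts linearly with internal degree.

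The main obstacle I expect is the first step: obtaining a \emph{global} complex of finite-rank locally free sheaves quasi-isomorphic to $\mathbb{L}_{X/k}$, together with the compatibility of $\Sym$ with quasi-isomorphisms and with the HKR identification. I would first try to use the resolution property for a separated Noetherian scheme embedded in a smooth scheme $M$. If that is unavailable, I would use the truncated cotangent triangle from $X\hookrightarrow M$ locally, and define the derived categories by descent. In that case the equivalences must be shown compatible with restriction to opens, which follows since the functors in Theorem 1.2 are given by tensoring with explicit bimodules $\K_{\A}$, $\K_{\B}$.
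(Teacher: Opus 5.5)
\textbf{There is a genuine gap in your first step.} A global bounded-above complex of finite-rank locally free sheaves quasi-isomorphic to $\mathbb{L}_{X/k}$ does not follow from $X$ being Noetherian and separated. Pseudo-coherence of $\mathbb{L}_{X/k}$ only gives such models locally; this is exactly what Corollary \ref{cor:cotan} provides on affine opens. A global model needs something like the resolution property. That holds for quasi-projective (more generally divisorial) schemes, but it is not available for an arbitrary separated scheme of finite type, which is the generality of the statement.

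Where a global model $\X$ does exist, your argument is correct and shorter than the paper's. Theorem \ref{thm:main} applies directly to $\X[1]$, whose degree-$0$ term vanishes, and $(\X[1])^{\vee}[-1]=\X^{\vee}[-2]\simeq\mathbb{T}_{X/k}[-2]$.

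A minor sign point on the shearing. With the paper's convention $M[n]^i=M^{i+n}$ and generators of $\mathbb{T}_{X/k}[-2]$ in internal degree $-1$, the regrading must lower cohomological degree by $2$ per generator. So it is $\mu(\M)^i_j=\M^{i-2j}_j$, i.e.\ $\M_j[-2j]$, not $\M_j[2j]$. With this sign $H^i(\mu\M)_j=H^{i-2j}(\M)_j$, so the condition $+$ is carried exactly onto $\searrow$.

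\textbf{Your fallback does not close the gap.} The local models $\X_U$ on different affines are related on $U\cap V$ only by non-canonical quasi-isomorphisms, defined up to homotopy. Hence $\K_{\A_U}|_{U\cap V}$ and $\K_{\A_V}|_{U\cap V}$ are different bimodules over different, merely quasi-isomorphic, pairs of algebras. ``Tensoring with explicit bimodules'' therefore gives no coherent system of identifications on overlaps and higher overlaps. In any case triangulated categories do not glue, so the descent has to be carried out for $\infty$-categories.

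\textbf{The missing idea is to make the local functor independent of the chosen model.} The paper does this with Lurie's adjunction $\theta\dashv E$ between $\operatorname{Alg}_k$ and $(\operatorname{Pr}^L_k)_{\D(k)/}$.
\begin{itemize}
\item The assignment $U\mapsto(\operatorname{IndCoh}\B|_U,\str|_U)$ is an honest diagram, defined without choices.
\item The counit gives a natural transformation of diagrams $\D(\operatorname{REnd}_{\B|_U}(\str|_U))\to\operatorname{IndCoh}\B|_U$.
\item The local models enter only to compute $\operatorname{REnd}_{\B|_U}(\str|_U)\simeq\A|_U$ via $\K_{\B|_U}$.
\item They also identify each component with $\M\mapsto\K_{\A|_U}\otimes_{\A|_U}\M$, which is an equivalence by Theorem \ref{thm:main}.
\end{itemize}
One then passes to the limit, getting $\D(\A)\simeq\operatorname{IndCoh}\B$. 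Finally one restricts to bounded-below objects using $\operatorname{IndCoh}^+\B\simeq\D^+(\B)$.
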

Here, the category $D_{gr}^{\searrow}(\Sym \mathbb{T}_{X/k})$ is the derived category of dg-$\Sym \mathbb{T}_{X/k}$-modules $\M$ satisfying $H^i(\M)_j=0$ for $i-2j \ll 0$, where the lower index denotes the internal degree.

Though we can obtain Koszul duality for derived loop spaces using Theorem 1.2, its geometrical interpretation is not clear. Specifically, the dg-algebra $\Sym (\mathbb{T}_{X/k}[-2])$ is not connective, and thus the derived category $D(\Sym (\mathbb{T}_{X/k}[-2]))$ cannot be obtained from derived schemes. However, we can relate this category to derived formal stacks. To see this, we use Lurie's theory of a formal moduli problem \cite{DAGX}, which is generalized to arbitrary schemes by Hennion \cite{Henn}. They constructed a functor of $\infty$-categories
\begin{align*}
\mathcal{F}_X : \operatorname{dgLie}_X \to \operatorname{dSt}_X^f,
\end{align*}
where $\operatorname{dgLie}_X$ denotes the $\infty$-category of dg-Lie algebras over $X$, and $\operatorname{dSt}_X^f$ denotes the $\infty$-category of derived formal stacks over $X$. Moreover, they established equivalence of categories between the derived category of representations of a dg-Lie algebra $\mathfrak{g}$ and the derived category of a derived formal stack $\mathcal{F}_X(\mathfrak{g})$. Using a modified version of this result, we can relate $\Sym (\mathbb{T}_{X/k}[-2])$ and $\mathcal{F}_X(\mathbb{T}_{X/k}[-2])$, where $\mathbb{T}_{X/k}[-2]$ is regarded as a commutative dg-Lie algebra over $X$. As a consequence, we obtain the following equivalences.
\begin{thm}(cf. Corollary \ref{main3})\\
Let $k$ be a field of characteristic $0$. Let $X$ be a separated scheme of finite type over $k$. Let $Y=\mathcal{F}_X(\mathbb{T}_{X/k}[-2])$ be a derived formal stack correspoinding to the commutative dg-Lie algebra $\mathbb{T}_{X/k}[-2]$ over $X$. Then we have the following equivalences of $\infty$-categories:
\begin{align*}
\operatorname{IndCoh}\mathcal{L}X &\cong \operatorname{IndCoh}Y, \\
\operatorname{IndCoh}_{gr}\mathcal{L}X &\cong \operatorname{IndCoh}_{gr}Y,\\
\D^+(\mathcal{L}X) &\cong \D^+(Y),\\
\D^+_{gr}(\mathcal{L}X) &\cong \D^+_{gr}(Y).
\end{align*}
Moreover, if we define $Y'=\mathcal{F}_X(\mathbb{T}_{X/k})$ by regarding $\mathbb{T}_{X/k}$ as a commutative dg-Lie algebra over $X$, then we also have the following equivalence
\begin{align*}
\operatorname{IndCoh}_{gr}\mathcal{L}X &\cong \operatorname{IndCoh}_{gr}Y'.
\end{align*}
\end{thm}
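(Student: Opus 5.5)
The plan is to pass through the algebraic Koszul duality of Theorem 1.3 (Theorem \ref{thm:loop}) and identify both sides with categories attached to derived formal stacks. By Ben-Zvi--Nadler's HKR theorem, $\mathcal{L}X \cong \operatorname{Spec}\Sym(\mathbb{L}_{X/k}[1])$ for $X$ separated of finite type. Hence $\D^+(\mathcal{L}X)$ is the $\infty$-categorical enhancement of $D^+(\Sym(\mathbb{L}_{X/k}[1]))$, and the same holds in the graded version. First I would replace $\mathbb{L}_{X/k}$ by a connective complex $\X$ of locally free sheaves of finite rank. On a separated scheme of finite type such a resolution exists Zariski-locally, and globally after choosing an ample family or a suitable cover. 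Then $\mathbb{T}_{X/k}[-2]\simeq \X^\vee[-2]$. Since the Koszul functors of Theorem \ref{thm:main} are given by tensoring with, and taking $\Hom$ out of, the explicit K-projective bimodules $\K_\A,\K_\B$, they lift to dg-enhancements. This upgrades the triangulated equivalence $D^+(\mathcal{L}X)\cong D^+(\Sym(\mathbb{T}_{X/k}[-2]))$ to an equivalence of stable $\infty$-categories, and likewise in the graded case.

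The second step is the formal-stack side. I would use Hennion's globalization of Lurie's formal moduli theory: for a dg-Lie algebra $\mathfrak g$ over $X$, quasi-coherent or IndCoh sheaves on $\mathcal{F}_X(\mathfrak g)$ are identified with representations of $\mathfrak g$. In the bounded-below setting these are modules over the Chevalley--Eilenberg cochains, or equivalently over $U(\mathfrak g)$ via a further Koszul duality. For the abelian Lie algebra $\mathfrak g=\mathbb{T}_{X/k}[-2]$ we have $U\mathfrak g=\Sym(\mathbb{T}_{X/k}[-1])$ and $C^*(\mathfrak g)=\Sym(\mathbb{L}_{X/k}[1])$. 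I would take the modified statement announced in the introduction to be the following: $\D^+(Y)$ is equivalent to bounded-below $\Sym(\mathbb{T}_{X/k}[-2])$-modules, and $\operatorname{IndCoh}Y$ is the ind-completion of the coherent part. Combining this with the first step gives $\D^+(\mathcal{L}X)\cong\D^+(Y)$ and its graded analogue. The IndCoh statements then follow by restricting to compact or coherent objects, which the Koszul functors exchange (bounded coherent objects correspond to perfect-type objects), and taking $\operatorname{Ind}$.

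For $Y'=\mathcal{F}_X(\mathbb{T}_{X/k})$, I would use the grading. In the graded setting, shifting the cohomological degree by twice the internal degree identifies graded $\Sym(\mathbb{T}_{X/k}[-2])$-modules with graded $\Sym\mathbb{T}_{X/k}$-modules. This is the equivalence $D^+_{gr}(\Sym(\mathbb{T}_{X/k}[-2]))\cong D^{\searrow}_{gr}(\Sym\mathbb{T}_{X/k})$ of Theorem \ref{thm:loop}, and it is compatible with $\mathcal{F}_X$ applied to $\mathbb{G}_m$-equivariant Lie algebras.

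I expect the main obstacle to be the modified Lurie--Hennion comparison. The algebra $\Sym(\mathbb{T}_{X/k}[-2])$ is neither connective nor of finite type in the usual sense, so one has to check that the representation category of $\mathfrak g$ matches modules over it under the $+$-boundedness conditions. One also has to verify that these identifications glue over an affine cover of $X$. My first attempt would be to prove the comparison Zariski-locally, with explicit Chevalley--Eilenberg complexes, and then descend using separatedness.
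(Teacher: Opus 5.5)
Your first step is fine as a citation of Theorem \ref{thm:loop}. Note, though, that a global locally free model of $\mathbb{L}_{X/k}$ is not available in general, which is why that theorem is proved on affine opens and glued.

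The gap is in your second step, and that is where all the content of this corollary beyond Theorem \ref{thm:loop} lies. The paper's proof combines three facts:
\begin{itemize}
\item Theorem \ref{thm:loop} gives $\operatorname{IndCoh}\mathcal{L}X\cong\D(\Sym(\mathbb{T}_{X/k}[-2]))$ and its graded and bounded-below variants.
\item Theorem \ref{thm:main2} gives $\D(U_X(\mathfrak g))\cong\operatorname{IndCoh}\mathcal{F}_X(\mathfrak g)$ and $\D^+(U_X(\mathfrak g))\cong\D^+(\mathcal{F}_X(\mathfrak g))$.
\item For an abelian dg-Lie algebra, $U_X(\mathfrak g)=\Sym\mathfrak g$ with no shift, so $U_X(\mathbb{T}_{X/k}[-2])=\Sym(\mathbb{T}_{X/k}[-2])$.
\end{itemize}
You instead write $U\mathfrak g=\Sym(\mathbb{T}_{X/k}[-1])$. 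That is $\Sym(\mathfrak g[1])$, the Chevalley--Eilenberg chains, not the enveloping algebra. With that formula the representation side would be $\D(\Sym(\mathbb{T}_{X/k}[-1]))$, which does not match Theorem \ref{thm:loop}. You then declare the ``modified statement'' to be that $\D^+(Y)$ is bounded-below $\Sym(\mathbb{T}_{X/k}[-2])$-modules. This contradicts your own computation and assumes exactly what must be proved. Similarly, the claim that bounded-below sheaves on $Y$ are $C^*(\mathfrak g)=\Sym(\mathbb{L}_{X/k}[1])$-modules would by itself already give $\D^+(\mathcal{L}X)\cong\D^+(Y)$. It is an output of the argument, not an available input.

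What is missing is the proof of Theorem \ref{thm:main2}. $Y$ is a functor on $\operatorname{CAlg}^{sm}$, and $\operatorname{IndCoh}Y$ and $\D^+(Y)$ are defined as limits over small points. So ``explicit Chevalley--Eilenberg complexes of $\mathfrak g$'' do not compute them: $C^*(\mathfrak g)$ is not among the small algebras indexing those limits. The paper argues as follows.
\begin{itemize}
\item Locally, write $\mathfrak g$ as a sifted colimit of free dg-Lie algebras $L$ on finitely many positive-degree generators $M$. For these, $U_A(L)=T_A(M)$ and $C_A(L)=A\oplus M^\vee[-1]$ is a square-zero extension.
\item Prove a small Koszul duality $\D(U_A(L))\cong\operatorname{IndCoh}C_A(L)$ and $\D^+(U_A(L))\cong\D^+(C_A(L))$, using the explicit resolution $T$ of Lemma \ref{lem:reso}.
\item Pass to the limit over $L$, using $\D(U_A(\mathfrak g))\simeq\lim_L\D(U_A(L))$, and then over affine opens via the adjunction $\theta\dashv E$.
\end{itemize}

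Your IndCoh step also runs in the wrong direction. $\operatorname{IndCoh}Y$ is a limit, not a priori $\operatorname{Ind}$ of a coherent subcategory. So it does not follow from the $+$ equivalence by ``restricting to compact objects and taking $\operatorname{Ind}$''. The paper proves the unbounded IndCoh equivalences first ($\operatorname{Perf}\to\operatorname{Coh}$, then $\operatorname{Ind}$, then limits) and obtains the $+$ versions by restriction.

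Finally, for $Y'$ you need the unbounded shear $\D_{gr}(\Sym(\mathbb{T}_{X/k}[-2]))\cong\D_{gr}(\Sym\mathbb{T}_{X/k})$, together with Theorem \ref{thm:main2} for $\mathfrak g=\mathbb{T}_{X/k}$. The $D^{\searrow}_{gr}$ statement you cite is not what is needed: the shear does not preserve bounded-belowness, which is why no $\D^+_{gr}(Y')$ equivalence is claimed.
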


Let us explain the structure of this paper. In section 2, we collect basic definitions and related results we need to prove our main theorems. In section 3, we give a proof for Theorem 1.2. \S 3.1 and \S 3.2 are devoted to construct $\K_{\A}$ and $\K_{\B}$ and study the properties of them. More specifically, we prove that $\K_{\A}$ and $\K_{\B}$ are both quasi-isomorphic to $\str$ and they are K-projective at each stalk. Although in \S 3.1 we require the complex $\X$ is bounded, in \S 3.2 we treat the case when $\X$ is unbounded as the modified version of the content of \S 3.1. In \S 3.3, we construct the equivalences in Theorem 1.2. In section 4, we assume that $\operatorname{char}k=0$ and discuss applications to derived loop spaces. In \S 4.1, we apply Theorem 1.2 to derived loop spaces and deduce Koszul duality for derived loop spaces of arbitrary schemes of finite type over $k$. In \S 4.2, after reviewing the theory of derived formal stacks, we prove the relation between derived loop spaces and derived formal stacks.\\
\textbf{Acknowledgments.} We sincerely thank professor Isamu Iwanari for suggesting a lot of useful advice for our research.

\section{Preliminaries}

In this section, we give basic definitions and related results needed to prove our main theorems. Throughout this paper, we fix a field $k$ and a Noetherian scheme $X$ over $k$.

\subsection{Sheaves of dg-algebras}
The purpose of this section is to review the definition of dg-algebras and related concepts. Throughout this section, we fix a commutative ring $R$.
\begin{dfn}
A graded $R$-algebra $A=\bigoplus_{n \in \mathbb{Z}} A^n$ equipped with degree $1$ homomorphism of graded $R$-modules $d_A:A\to A$ satisfying the follwing conditions
\begin{itemize}
\item $d_A^2=0$.
\item $d_A(ab) =d_A(a)b + (-1)^pad_A(b)$, for $a \in A^p$ and $b \in A$.
\end{itemize}
is called a differential graded (abbriviated by dg) algebra (over $R$). We say that a dg-algebra $A$ is commutative if $ab = (-1)^{pq}ba$ for any $a, \in A^p$ and $b \in A^q$.

If furthermore dg-algebra $A$ is bigraded as $A =\bigoplus_{(i,j) \in \mathbb{Z}^2}A^i_j$ and the differential $d_A$ is of bidegree $(1,0)$ (i.e. $d_A(A^i_j)\subset A^{i+1}_j$), then we say that $A$ is a graded dg-algebra. For $A^i_j$, we call $i$ the cohomological grading and $j$ the internal grading of $A$.
\end{dfn}
\begin{dfn}
Let $A$ be a dg-algebra (resp. graded dg-algebra). A dg-$A$-module $M$ is a graded (resp. bigraded) left $A$-module endowed with degree $1$ (resp. (1,0)) homomorphism of graded $R$-modules satisfying $d_{M}^2=0$ and $d_{M}(am) = d_{A}(a)m+(-1)^{p}d_{M}(m)$ for $a \in A^p$ and $m \in M$.  A dg-$A$-module $M$ is said to be acyclic if its cohomology is zero, that is, $H(M):= \bigoplus_{i \in \mathbb{Z}}H^i(M)=0$, and $M$ is said to be connective if $M^i=0$ for $i \geq 0$.

Let $M$ and $N$ be dg-$A$-modules. A homomorphism of dg-$A$-modules $f:M \to N$ is a graded (resp. bigraded) $A$-module homomorphism satisfying $f \circ d_{M} = d_{\N}\circ f$.

For a dg-$A$-module $M$ and a homogeneous element $m \in M^p$, let $|m|$ denote the cohomological degree of $m$, that is, $|m|:=p$.
\end{dfn}
\begin{rem}
If $A$ is a commutative dg-algebra, then left dg-$A$-modules naturally have right action of $A$ by defining $ma:=(-1)^{|a||m|}am$. 
\end{rem}
\begin{rem}
Let $R$ be a commutative ring. We can consider $R$ as a dg-algebra concentrated at degree $0$. Then a dg-$R$-module is nothing but a chain complex of $R$-modules.
\end{rem}

Hereafter, we always assume that dg-algebras are commutative.

\begin{dfn}
A sheaf of dg-algebras on $X$ is a pair $(\A,d_{\A})$, where $\A=\bigoplus_{i \in \mathbb{Z}} \A^i$ is a sheaf of graded algebras on $X$, with each $\A^i$ quasi-coherent, and $d_{\A} :\A \to \A$  is a degree $1$ homomorphism of $\str$-modules satisfying the following conditions:
\begin{itemize}
\item $d_{\A}^2=0$.
\item For any local section $a \in \A^p$, $b \in \A^q$, $d_{\A}(ab)=d_{\A}(a)b+(-1)^pd_{\A}(b)$. 
\end{itemize}
A dg-$\A$-module is a pair $(\M,d_{\M})$, where $\M=\bigoplus_{i \in \mathbb{Z}}\M^i$ is a graded left $\A$-module with each $\M^i$ quasi-coherent, and $d_{\M} :\M \to \M$  is a degree $1$ homomorphism of $\str$-modules satisfying the following conditions:
\begin{itemize}
\item $d_{\M}^2=0$.
\item For any local section $a \in \A^p$, $m \in \M^q$, $d_{\M}(am)=d_{\A}(a)m+(-1)^pd_{\M}(m)$. 
\end{itemize}
If furthermore a sheaf of dg-algebras $\A$ is bigraded as $\A =\bigoplus_{(i,j) \in \mathbb{Z}^2}\A^i_j$ and the differential $d_{\A}$ is of bidegree $(1,0)$ (i.e. $d_{\A}(\A^i_j)\subset \A^{i+1}_j$), then we say that $\A$ is a graded dg-algebra. For $\A^i_j$, we call $i$ the cohomological grading and $j$ the internal grading of $A$.
\end{dfn}

We give some notations to related categories.
\begin{dfn}
\  

\begin{enumerate}
\item Let $C(\mathcal{A})$ denote the category of dg-$\mathcal{A}$-modules, and let $\operatorname{Gr-}\A$ be the category of graded $\A$-modules (namely, forgetting differentials). We define $D(\mathcal{A})$ to be a localization of $C(\mathcal{A})$ with respect to quasi-isomorphisms. 

\item The category $C^+(\A)$ (resp. $C^-(\A)$) is the full subcategory of $C(\A)$ consisting of bounded below (resp. bounded above) complexes (that is, dg-$\A$-modules $\M$ satisfying $\M^i=0$ for $i\ll 0$ (resp. $i\gg 0$)) and let $D^+(\mathcal{A})$ (resp. $D^-(\mathcal{A})$) be the full subcategory of $D(\A)$ consisting of objects which have bounded-below (resp. bounded-above) cohomologies.

\item When $\A$ is a sheaf of graded dg-algebras on $X$, let $C_{gr}(\A)$ and $D_{gr}(\A)$ denote the category of graded dg-$\A$-modules and its derived category, respectively. The categories $C_{gr}^+(\A)$ and $D_{gr}^+(\A)$ are defined in a similar way.
\end{enumerate}
\end{dfn}
We also use some related $\infty$-categories.
\begin{dfn}
\ 

\begin{enumerate}
\item Let $\D(\A)$ denote the $\infty$-category of dg-$\A$-modules, and let $\D^+(\mathcal{A})$ (resp. $\D^-(\mathcal{A})$) be the full subcategory of $\D(\A)$ consisting of objects which have bounded-below (resp. bounded-above) cohomologies.
\item The $\infty$-category $\operatorname{Coh}\A$ is the full subcategory of $\D(\A)$ consisting of objects which have bounded and coherent cohomologies.
\item Assume $\A$ is connective. Let $\operatorname{IndCoh} \A$ denote the ind-completion of $\operatorname{Coh}\A$.
\item When $\A$ is a graded dg-algebra, we define $\infty$-categories $\D_{gr}(\A)$, $\D^{\pm}_{gr}(\A)$, $\operatorname{Coh}_{gr}\A$ and $\operatorname{IndCoh}_{gr} \A$ similarly as above.
\end{enumerate}
\end{dfn}
For usual dg-algebras $A$, we also use same notations as above.
\begin{rem}
Assume that $X=\operatorname{Spec} R$ is an affine scheme and let $\A$ be a sheaf of (graded) dg-algebras. Then there are equivalences of categories
\begin{align*}
&C(\A) \cong C(\Gamma(X,\A)), \\
&C_{gr}(\A) \cong C_{gr}(\Gamma(X,\A)).
\end{align*}
Since sheaves of dg-algebras and dg-modules are required to be quasi-coherent in each term, this immediately follows from the equivalence
\begin{align*}
\Gamma(X,-) : \operatorname{Qcoh}(X) \to R\operatorname{-Mod},
\end{align*}
where $\operatorname{Qcoh}(X)$ is a category of quasi-coherent sheaves on $X$ and $R\operatorname{-Mod}$ is a category of $R$-modules.
\end{rem}

We recall some operations on dg-modules. 
\begin{dfn}
Let $A$ be a dg-algebra, and let $M$ and $N$ be dg-$A$-modules. 
\begin{enumerate}
\item Let $n \in \mathbb{Z}$. The $n$-shift $M[n]$ of $M$ is a dg-$A$-module defined as follows:
\begin{itemize}
\item $M[n]^i :=M^{i+n}$.
\item $d_{(M[1])}^i :=(-1)^nd_M^{i+n}$.
\end{itemize} 
\item The tensor product $M \otimes_A N$ has natural dg-$A$-modules strcuture as follows. For $m \in M^p$ and $n \in N^q$, $|m\otimes n| := p+q$ and its differrential $d_{M \otimes_A N}$ is defined by the Leibnitz rule
\begin{align*}
d_{M \otimes_A N}(m \otimes n) = d_M(m) \otimes n + (-1)^p m \otimes d_N(n).
\end{align*}
\item A dg-$A$-module $\operatorname{Hom}_A^{\bullet}(M,N)$ is defined as follows:
\begin{itemize}
\item$ \operatorname{Hom}_A^n(M,N) :=$ \{degree $n$ homomorphisms of graded $A$-modules $f:M \to N$ \}
\item For $f \in \operatorname{Hom}_A^n(M,N)$,
\begin{align*}
d_{\operatorname{Hom}_A^{\bullet}(M,N)}(f) := d_{N} \circ f -(-1)^{n} f \circ d_{M}.
\end{align*}
\end{itemize}
\end{enumerate}
\end{dfn}
As in the case of usual rings, we have the following adjunctions.
\begin{lem}
\ 

\begin{enumerate}
\item  Let $A$ be a dg-algebra, and $M$ be a dg-$A$-module. The functor $M\otimes_A - : C(A) \to C(A)$ is left adjoint to $\operatorname{Hom}_A^{\bullet}(M,-) : C(A) \to C(A)$. Namely, we have functorial isomorphisms
\begin{align*}
\operatorname{Hom}_{C(A)}(M \otimes_A N ,P) \cong \operatorname{Hom}_{C(A)} (N, \operatorname{Hom}_A^{\bullet}(M,P))
\end{align*}
for any $N$, $P \in C(A)$. 
\item Let $A$, $B$ be dg-algebras, and $f :A\to B$ be a homomorphism of dg-algebras. Then the functor $B \otimes_{A} - : C(A) \to C(B)$ is left adjoint to the restriction functor $C(B) \to C(A)$. Namely, we have functorial isomorphisms
\begin{align*}
\operatorname{Hom}_{C(B)} (B \otimes_A M ,N) \cong \operatorname{Hom}_{C(A)}(M,N)
\end{align*}
for any dg-$A$-module $M$ and dg-$B$-module $N$.
\end{enumerate}
\end{lem}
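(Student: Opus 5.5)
We prove both parts by writing down explicit unit and counit maps and checking that they are mutually inverse bijections. This is the classical argument for ordinary rings, but each map has to respect gradings, differentials and signs.

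For (1), let $\varphi : M\otimes_A N \to P$ be a morphism in $C(A)$, that is, a degree $0$ $A$-linear map commuting with differentials. We associate to it $\tilde\varphi : N \to \operatorname{Hom}_A^{\bullet}(M,P)$ given by
\begin{align*}
\tilde\varphi(n)(m) := (-1)^{|m||n|}\varphi(m\otimes n).
\end{align*}
Here the sign is dictated by the swap of $m$ past $n$; equivalently, one may use the right module structure of the Remark above. Conversely, to $\psi : N\to \operatorname{Hom}^{\bullet}_A(M,P)$ we associate $m\otimes n\mapsto (-1)^{|m||n|}\psi(n)(m)$. These two assignments are visibly inverse to each other. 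We then carry out three checks, in this order:
\begin{enumerate}
\item $\tilde\varphi(n)$ is a homogeneous $A$-linear map of degree $|n|$. This uses the graded commutativity of $A$: moving $a$ past $n$ produces the sign $(-1)^{|a||n|}$ required for a degree $|n|$ map.
\item $\tilde\varphi$ itself is $A$-linear.
\item $\tilde\varphi$ commutes with differentials, that is,
\begin{align*}
d_P\circ\tilde\varphi(n) - (-1)^{|n|}\tilde\varphi(n)\circ d_M = \tilde\varphi(d_N n).
\end{align*}
\end{enumerate}
Check (3) is the only place where $\varphi\circ d_{M\otimes N}=d_P\circ\varphi$ is used. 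Expanding with the Leibniz formula $d(m\otimes n)=dm\otimes n+(-1)^{|m|}m\otimes dn$ reduces it to comparing the signs $(-1)^{|m||n|}$, $(-1)^{(|m|+1)|n|}$ and $(-1)^{|m|(|n|+1)}$. Finally, naturality in $N$ and $P$ is immediate, since the bijection is given by a formula involving only precomposition and postcomposition.

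For (2), we send a morphism $g : B\otimes_A M\to N$ of $C(B)$ to $g\circ\eta$, where $\eta : M\to B\otimes_A M$, $m\mapsto 1\otimes m$, is a morphism of dg-$A$-modules; this uses $d_B(1)=0$. In the other direction, $h : M\to N$ goes to $b\otimes m\mapsto b\,h(m)$. We check three things:
\begin{enumerate}
\item The map $b\otimes m\mapsto b\,h(m)$ is well defined on the tensor product over $A$, because $h(am)=f(a)h(m)$.
\item It is $B$-linear.
\item It commutes with differentials: $d_N(b\,h(m))=d_B(b)h(m)+(-1)^{|b|}b\,d_N h(m)$, which is the image of $d(b\otimes m)$.
\end{enumerate}
The two constructions are inverse because every element of $B\otimes_A M$ is a sum of terms $b(1\otimes m)$, and naturality is clear.

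Over a sheaf of dg-algebras the same formulas apply verbatim on local sections and glue, so the statement also holds in that setting.

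The only real obstacle is the sign bookkeeping in check (3) of part (1), that is, confirming that the Koszul sign convention in $\tilde\varphi$ is compatible with the differential of $\operatorname{Hom}^{\bullet}$ as defined in the paper. If that sign fails to match, I would instead define $\tilde\varphi(n)(m)=\varphi(n\otimes m)$, using $N\otimes_A M$ via the symmetry isomorphism $M\otimes_A N\cong N\otimes_A M$, which absorbs the sign.
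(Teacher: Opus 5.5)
Your proposal is correct and is the standard argument the paper has in mind. The paper gives no proof of this lemma beyond the remark that these adjunctions hold ``as in the case of usual rings,'' and your explicit bijections (the Koszul-signed $\tilde\varphi$ in (1), and $h\mapsto\bigl(b\otimes m\mapsto b\,h(m)\bigr)$ in (2)) are exactly that argument. The sign you were worried about does match the paper's conventions: with $\tilde\varphi(n)(m)=(-1)^{|m||n|}\varphi(m\otimes n)$, both $\bigl(d_P\circ\tilde\varphi(n)-(-1)^{|n|}\tilde\varphi(n)\circ d_M\bigr)(m)$ and $\tilde\varphi(d_N n)(m)$ equal $(-1)^{|m||n|+|m|}\varphi(m\otimes d_N n)$, so the fallback in your last paragraph is unnecessary.
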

\begin{rem} \label{gr}
When $A$ is a graded dg-algebra, we can also obtain the same kind of adjunction for $C_{gr}(A)$. For graded dg-$A$-modules $M$ and $N$, if we define a graded Hom complex $\operatorname{Hom}_{A,gr}^{\bullet}(M,N)$ by
\begin{align*}
\operatorname{Hom}_{A,gr}^{n}(M,N)_m := \{&\text{bidegree $(n,m)$ homomorphisms of bigraded $A$-modules} \\
&\text{$f:M \to N$ (i.e. $f$ satisfies $f(M^i_j) \subset N^{i+n}_{j+m}$)} \}
\end{align*}
with the same differential as $\operatorname{Hom}_{A}^{\bullet}(M,N)$, then $M \otimes_A - :C_{gr}(A) \to C_{gr}(A)$ is left adjoint to the functor $\operatorname{Hom}_{A,gr}^{\bullet}(M,-) :C_{gr}(A) \to C_{gr}(A)$.
\end{rem} 
Then we will see that the same is true for sheaves of dg-algebras. Let $\A$ be a sheaf of dg-algebras on $X$, and $\M$, $\N$ be dg-$\A$-modules. Then we can define dg-$\A$-modules $\M[1]$ and $\M \otimes_{\A} \N$ in the same way as in the case of usual dg-algebras.

The internal Hom $\Hom_{\A}(\M ,\N)$ is defined as follows.
\begin{itemize}
\item $\Hom_{\A}^n (\M , \N)$ is a sheaf which assigns
\begin{align*}
\{ \text{degree $n$ homomorphisms $\M|_U \to \N|_U$ of $\A|_U$-modules} \}
\end{align*}
for each open set $U \subset X$. 
\item For a local section $f \in \Hom_{\A}^n (\M , \N)$,
\begin{align*}
d_{\Hom_{\A}(\M ,\N)}(f) =d_{\N} \circ f -(-1)^nf \circ d_{\M}.
\end{align*}
\end{itemize}

As in the case of usual dg-algebras, we also have the following adjunctions for sheaves. (cf. \cite[\S 1.2]{Ri})
\begin{lem}
\ 

\begin{enumerate}
\item Let $\A$ be a sheaf dg-algebras on $X$, and let $\M$ be a dg-$\A$-module. The functor $\M \otimes_{\A} - : C(\A) \to C(\A)$ is left adjoint to $\Hom_{\A}(\M,-) : C(\A) \to C(\A)$. Namely, we have functorial isomorphisms
\begin{align*}
\operatorname{Hom}_{C(\A)}(\M \otimes_{\A} \N ,\mathcal{P}) \cong \operatorname{Hom}_{C(\A)} (\N, \Hom_{\A}(\M,\mathcal{P}))
\end{align*}
for any $\N$, $\mathcal{P} \in C(\A)$. 
\item Let $\A$, $\B$ be dg-algebras, and let $f :\A \to \B$ be a homomorphism of dg-algebras. Then the functor $\B \otimes_{\A} - : C(\A) \to C(\B)$ is left adjoint to the restriction functor $C(\B) \to C(\A)$. Namely, we have functorial isomorphisms
\begin{align*}
\operatorname{Hom}_{C(\B)} (\B \otimes_{\A} \M ,\N) \cong \operatorname{Hom}_{C(\A)}(\M,\N)
\end{align*}
for any dg-$\A$-module $\M$ and dg-$\B$-module $\N$.
\end{enumerate}
\end{lem}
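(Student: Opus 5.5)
The plan is to reduce both adjunctions to the module-level statements already available, working locally and gluing. For (1), I would first define the candidate bijection directly at the level of sections. Given a morphism of dg-$\A$-modules $\phi:\M\otimes_{\A}\N\to\mathcal{P}$, I define $\psi:\N\to\Hom_{\A}(\M,\mathcal{P})$ on an open $U$ and a section $n\in\N^q(U)$ by letting $\psi(n)$ be the degree $q$ morphism $\M|_V\to\mathcal{P}|_V$ (for $V\subset U$) given by
\[
m\mapsto(-1)^{|m||n|}\phi(m\otimes n).
\]
The sign comes from the convention that $\Hom_{\A}(\M,-)$ is applied on the left and $\N$ acts through the right action of the remark on commutative dg-algebras. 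Conversely, given $\psi$, the presheaf map $m\otimes n\mapsto(-1)^{|m||n|}\psi(n)(m)$ is $\A$-balanced. It therefore induces a map on the presheaf tensor product, and hence, by the universal property of sheafification, a map on $\M\otimes_{\A}\N$.

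The verifications then split into three checks. First, $\psi(n)$ is $\A$-linear with the graded sign rule. Second, $\psi$ itself is $\A$-linear. Third, $\psi$ commutes with differentials, meaning
\[
d_{\Hom}(\psi(n))=\psi(d_{\N}n).
\]
This uses $d_{\Hom}(f)=d_{\mathcal{P}}\circ f-(-1)^{|f|}f\circ d_{\M}$ together with the Leibniz rule for $d_{\M\otimes\N}$. These are exactly the same sign computations as in the ring case of the earlier lemma, now carried out on sections over each open set. Compatibility with restrictions is automatic because everything is defined section-wise. The two constructions are mutually inverse on simple tensors, hence on the sheafification, and naturality in $\N$ and $\mathcal{P}$ is clear.

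For (2), I would argue analogously. The unit is $\M\to\B\otimes_{\A}\M$, $m\mapsto 1\otimes m$. To a $\B$-linear map $g:\B\otimes_{\A}\M\to\N$ I assign $g(1\otimes -)$. In the other direction, to an $\A$-linear chain map $f:\M\to\N$ I assign the map induced by $b\otimes m\mapsto bf(m)$, which is $\A$-balanced on presheaves and so descends through sheafification. Checking that $b\otimes m\mapsto bf(m)$ respects differentials uses $d_{\N}(bn)=d_{\B}(b)n+(-1)^{|b|}bd_{\N}n$.

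One must also confirm that the constructed objects are legitimate objects of $C(\A)$: their graded pieces should be quasi-coherent. For the tensor product this is standard. For $\Hom_{\A}(\M,\mathcal{P})$ I would sidestep the issue by noting that the adjunction isomorphism is an isomorphism of Hom sets and holds in the category of all sheaves of dg-modules. Alternatively, one can cite \cite[\S 1.2]{Ri}, as the paper does.

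The main obstacle is the bookkeeping of Koszul signs in (1), especially verifying that $\psi$ is a chain map. The sheaf-theoretic part is routine once one passes through the presheaf tensor product and sheafification. I would handle the signs by fixing the convention above and checking on homogeneous simple tensors only.
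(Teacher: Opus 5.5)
Your proposal is correct and is essentially the argument the paper has in mind. The paper gives no proof beyond ``as in the case of usual dg-algebras'' and a pointer to Riche (\S 1.2), and you carry out that standard ring-level argument section-wise, passing through the presheaf tensor product and sheafification; your sign conventions do make $\psi$ an $\A$-linear chain map. Your remark that $\Hom_{\A}(\M,\mathcal{P})$ need not have quasi-coherent terms is also a genuine point the paper's formulation glosses over: the adjunction should be read in the category of all sheaves of dg-$\A$-modules, as you note.
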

\begin{rem}
When $\A$ is a sheaf of graded dg-algebras, we can obtain the same kind of adjunction for $C_{gr}(\A)$ by replacing $\Hom_{\A}$ by graded Hom complex $\Hom_{\A,gr}$, which is defined in the same way as in Remark \ref{gr}.
\end{rem}

Now, we will give a brief review of the notion of the symmetric algebra of a complex.
\begin{dfn}
Let $\X$ be a complex of locally free $\str$-modules. The tensor algebra $T(\X)$ is defined to be $T(\X) = \bigoplus_{n=0}^\infty \X^{\otimes n}$ equipped with natural multiplication and differentials induced by the original complex $\X$, where the tensor product is taken over $\str$.

The symmetric algebra $\Sym \X$ of $\X$ is the quotient sheaf of $T(\X)$ by graded-commutative relations, namely, a sheaf of $T(\X)$ generated by local sections $x \otimes y -(-1)^{|x||y|}y\otimes x$ over $T(\X)$.
\end{dfn}

\begin{rem}
Let $\X$ and $\mathcal{Y}$ be complexes of locally free sheaves, and assume that there is a quasi-isomorphism $\varphi :\X \to \mathcal{Y}$. Then $\varphi$ induces a quasi-isomorphism betweem tensor algebras $T(\X) \to T (\mathcal{Y})$. If moreover $\operatorname{char}k=0$, $\varphi$ also induces a quasi-isomorphism between symmetric algebras $\Sym \X \to \Sym \mathcal{Y}$, since $\Sym \X$ can be expressed as $\Sym \X = \bigoplus_{n \geq 0} (\X^{\otimes n}/\Sigma_n)$, where $\Sigma_n$ denotes the $n$-th symmetric group.
\end{rem}

\begin{ex}
Consider $\X =( 0 \to \E^{-1} \to \E^0 \to 0$), where $\E^0$ is in cohomological degree $0$ and $\E^{-1}$ is in cohomological degree $-1$. Then sections of $\E^0$ are commutative, and those of $\E^{-1}$ are anti-commutative in $\Sym \X$. Hence $\Sym \X = \Sym \E^0 \otimes \wedge^{\bullet} \E^{-1}$, where $\Sym \E^0$ is concentrated in cohomological degree 0, and sections of $\E^{-1}$ is of cohomological degree $-1$. Thus as a complex of $\str$-modules, $\Sym \X$ is expressed as
\begin{align*}
\Sym \X = (0 &\to \Sym \E^0 \otimes \wedge^n \E^{-1} \to \Sym \E^0 \otimes \wedge^{n-1} \E^{-1} \to \cdots \to \Sym \E^0 \otimes \E^{-1} \to \Sym \E^0 \to 0).
\end{align*}

Its ``Koszul dual'' is given by $\Sym (\X^{\vee}[-1]) = \wedge^\bullet (\E^{0})^{\vee} \otimes \Sym ((\E^{-1})^{\vee})$, where $\E^{0}$ has cohomological degree 1, and $\E^{-1}$ has cohomological degree 2. As the classical Koszul duality, symmetric algebras are switched to exterior algebras and vice versa.
\end{ex}

\subsection{K-projective dg-modules and K-flat dg-modules}

To prove our main theorems, we will construct functors between derived categories. To check that these functors are well-defined, we need the notion of K-projectivity and K-flatness.

\begin{dfn}
Let $A$ be a dg-algebra, and $M$ be a dg-$A$-module. M is called K-projective if the functor $\operatorname{Hom}_A^{\bullet}(M,-)$ preserves acyclic dg-$A$-modules, that is, for any dg-$A$-module $N$ satisfying $H(M)=0$, we have $H(\operatorname{Hom}_A^{\bullet}(M,N))=0$.

We say that $M$ is K-flat if the functor $M\otimes_A -$ preserves acyclic dg-$A$-modules.
\end{dfn}
By the adjointness of tensor products and Hom, we can prove the following fact.
\begin{lem} ( \cite[Proposition 10.3.4]{Yek})\label{lem:pf}
Let $A$ be a dg-algebra and let $M$ be a K-projective dg-$A$-module. Then $M$ is a K-flat dg-$A$-module.
\end{lem}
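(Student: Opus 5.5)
The plan is to test K-flatness through the Hom-tensor adjunction, using the fact that $\operatorname{Hom}$ into an injective cogenerator detects acyclicity. Let $N$ be an acyclic dg-$A$-module; we must show $M\otimes_A N$ is acyclic. Regard $M\otimes_A N$ as a complex of $R$-modules (or of abelian groups). Take the character dual $(-)^* := \operatorname{Hom}_{\mathbb{Z}}^{\bullet}(-,\mathbb{Q}/\mathbb{Z})$ applied degreewise, with the usual sign conventions. Since $\mathbb{Q}/\mathbb{Z}$ is an injective cogenerator of abelian groups, a complex $C$ is acyclic if and only if $C^*$ is acyclic. Indeed, $\operatorname{Hom}_{\mathbb{Z}}(-,\mathbb{Q}/\mathbb{Z})$ is exact and faithful on modules, so $H^{-i}(C^*) \cong H^i(C)^*$, and $H^i(C)^* = 0$ forces $H^i(C)=0$.

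The key step is a Hom-tensor adjunction at the level of Hom complexes:
\begin{align*}
\operatorname{Hom}_{\mathbb{Z}}^{\bullet}(M\otimes_A N,\mathbb{Q}/\mathbb{Z}) \cong \operatorname{Hom}_A^{\bullet}\bigl(M,\operatorname{Hom}_{\mathbb{Z}}^{\bullet}(N,\mathbb{Q}/\mathbb{Z})\bigr).
\end{align*}
This is the enriched form of the adjunction in Lemma 2.9(1), with $N^*$ carrying its natural dg-$A$-module structure. Since $A$ is commutative, left and right structures agree via the sign rule in Remark 2.3. To establish the isomorphism, I would write down $f\mapsto (m\mapsto (n\mapsto f(m\otimes n)))$. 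I would then check that it is compatible with the $A$-balancing, the cohomological degrees, and the differentials. The main care needed is the Koszul signs $(-1)^{|a||m|}$ and the sign in $d_{\operatorname{Hom}}$; I expect this sign bookkeeping to be the only real obstacle, and it is routine.

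To finish: since $N$ is acyclic, $N^*$ is acyclic because $\mathbb{Q}/\mathbb{Z}$ is injective. K-projectivity of $M$ then gives that $\operatorname{Hom}_A^{\bullet}(M,N^*)$ is acyclic. By the isomorphism above, $(M\otimes_A N)^*$ is acyclic, and by the cogenerator property $M\otimes_A N$ is acyclic. Hence $M$ is K-flat.
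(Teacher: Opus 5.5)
Your proposal is correct and matches the paper's approach. The paper gives no argument beyond remarking that the lemma follows from the adjointness of tensor products and $\operatorname{Hom}$ (citing Yekutieli). Your argument makes this precise in the standard way: dualize with $\operatorname{Hom}_{\mathbb{Z}}^{\bullet}(-,\mathbb{Q}/\mathbb{Z})$, apply the enriched adjunction, and use that $\mathbb{Q}/\mathbb{Z}$ is an injective cogenerator.
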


Later, we need to verify that a given dg-module is K-projective. There is useful criterion to check the K-projectivity. In order to state this, we prepare some definitions.
\begin{dfn}
Let $A$ be a dg-algebra over $R$ and let M be a dg-$A$-module. We say that $M$ has a split filtration if there exists a filtration
\begin{align*}
0=F_{-1}M \subset F_0M \subset F_1M \subset \cdots \subset F_pM \subset \cdots 
\end{align*}
satisfying the following two conditions:
\begin{enumerate}
\item $\bigcup_p F_pM=M$,\\
\item For each $p \geq 0$, the exists a complex $K_p$ consisting of free $R$-modules satisfying $F_p/F_{p-1} \cong A \otimes_R K_p$.
\end{enumerate}
We say that $M$ has a cellularly split filtration if $M$ has a split filtration and complexes $K_p$ in the condition (2) has zero differentials for all $p$.
\end{dfn}

It is known that dg-modules which has this kind of filtrations are K-projective:

\begin{lem} \label{lem.spa} ( \cite[Theorem 9.10]{BMR}) Let $A$ be a dg-algebra over $R$, and let $M$ be a dg-$A$-module. If $M$ has a cellularly split filtration, $M$ is $K$-projective over $A$. Moreover if $M$ is bounded below and has a split filtration, then $M$ is K-projective over $A$.
\end{lem}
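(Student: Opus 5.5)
The plan is to prove the first assertion first and then reduce the second to it, taking the definition of K-projectivity head on. Let $M$ carry a cellularly split filtration $F_\bullet M$ with $F_p M/F_{p-1}M \cong A\otimes_R K_p$, where each $K_p$ is a graded free $R$-module with zero differential. Fix an acyclic dg-$A$-module $N$. We must show $H(\operatorname{Hom}_A^\bullet(M,N))=0$. The strategy is to compute $\operatorname{Hom}_A^\bullet(M,N)$ as an inverse limit of the complexes $\operatorname{Hom}_A^\bullet(F_pM,N)$ and control the limit with a Mittag-Leffler argument.

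\emph{Step 1: the free pieces.} Write $K_p=\bigoplus_\alpha R e_\alpha$ with homogeneous generators. Then $\operatorname{Hom}_A^\bullet(A\otimes_R K_p,N)\cong\prod_\alpha N[-|e_\alpha|]$, and the differential matches because $d(e_\alpha)=0$. This product is acyclic, since products of acyclic complexes of $R$-modules are acyclic.

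\emph{Step 2: induction on the filtration.} Each inclusion $F_{p-1}M\subset F_pM$ splits as graded $A$-modules, because the quotient is graded-free. Hence
\begin{align*}
0\to \operatorname{Hom}_A^\bullet(F_p/F_{p-1},N)\to \operatorname{Hom}_A^\bullet(F_pM,N)\to \operatorname{Hom}_A^\bullet(F_{p-1}M,N)\to 0
\end{align*}
is exact, with degreewise surjective restriction maps. By induction and the long exact sequence, every $\operatorname{Hom}_A^\bullet(F_pM,N)$ is acyclic.

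\emph{Step 3: passing to the limit.} Since $M=\bigcup_p F_pM$, we have $\operatorname{Hom}_A^\bullet(M,N)=\varprojlim_p \operatorname{Hom}_A^\bullet(F_pM,N)$. The tower consists of acyclic complexes with surjective transition maps, so it is Mittag-Leffler, including on cycles. The Milnor sequence $0\to\varprojlim^1 H^{n-1}\to H^n(\varprojlim)\to\varprojlim H^n\to0$ then shows the limit is acyclic. This is where the zero-differential hypothesis on $K_p$ is used.

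For the second assertion, suppose $M$ is bounded below with a split filtration, so the $K_p$ are complexes of free $R$-modules with nonzero differential. The idea is to refine the filtration into a cellularly split one. Filter each $A\otimes_R K_p$ by the brutal truncations $A\otimes_R \sigma_{\geq -q}K_p$. For a fixed $p$, the successive quotients are $A\otimes_R K_p^{j}[-j]$, which are free with zero differential. One must check that $K_p$ is bounded below, which holds in the intended applications, and that this exhausts $F_p/F_{p-1}$. The subquotients in the combined filtration are then of the form $A\otimes(\text{free, }d=0)$. The combined filtration is indexed by pairs $(p,q)$, so it has to be reindexed as a single $\mathbb{N}$-indexed exhaustive filtration. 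The cleanest route is to interleave the pieces so that each one is a dg-submodule. Where this is awkward, we can instead run Steps 2 and 3 directly: first show that each $A\otimes_R K_p$ with $K_p$ bounded below is K-projective, by a limit argument over the truncations, and then apply the same tower argument over $p$ with these K-projective pieces in place of the free ones.

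The main obstacle is this second assertion, in particular the possibility that $K_p$ is unbounded above. Truncating $K_p$ from below gives $A\otimes\sigma_{\geq -q}K_p$, which is not a dg-submodule unless the truncation is chosen compatibly. Here I would use the stupid filtration $\sigma_{\geq q}$, which gives subcomplexes, together with the bounded-below hypothesis on $M$ to order the pieces. If this fails, the fallback is to cite \cite[Theorem 9.10]{BMR} directly, as the paper does.
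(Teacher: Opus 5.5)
Your Steps 1--3 for the cellularly split case are correct. This is the standard cell-module argument; the paper gives no proof of its own and only cites \cite[Theorem 9.10]{BMR}. One misattribution: the zero-differential hypothesis is used only in Step 1, where it lets you identify $\operatorname{Hom}^\bullet_A(A\otimes_R K_p,N)$ with a product of shifts of $N$. Step 3 works for any tower of acyclic complexes with degreewise surjective transition maps. That observation is what settles the (corrected) second assertion, where your proposal has a genuine gap.

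\textbf{The gap in the second assertion.} Your refinement by stupid truncations does not work under the hypothesis as the paper states it.
\begin{itemize}
\item In the paper's cohomological convention, $A\otimes_R\sigma_{\ge q}K_p$ is always a dg-submodule, so that is not the issue.
\item The real issue is that an exhaustive increasing filtration $0\subset G_0\subset G_1\subset\cdots$ of $K_p$ by stupid truncations, with zero-differential quotients, exists only if $K_p$ is bounded \emph{above}.
\item The assumption $M^i=0$ for $i\ll0$ only makes $K_p$ bounded below. Your fallback ``limit over truncations'' fails for the same reason.
\end{itemize}
In fact no argument can work, because with the paper's conventions the second assertion is false. Take $A=R=k[x]/(x^2)$ and $M=(R\xrightarrow{x}R\xrightarrow{x}R\to\cdots)$ in degrees $0,1,2,\dots$, with the one-step split filtration $F_0M=M\cong A\otimes_R M$. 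Then $M$ is bounded below. Now let $N=(\cdots\xrightarrow{x}R\xrightarrow{x}R\xrightarrow{x}\cdots)$, which is acyclic. The inclusion $M\hookrightarrow N$ is not null-homotopic, since in degree $0$ every $dh+hd$ has image in $xR$. Hence $H^0\operatorname{Hom}^\bullet_R(M,N)\neq 0$, and $M$ is not K-projective.

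\textbf{The correct hypothesis and how your argument then goes through.} The right condition is $M^i=0$ for $i\gg 0$ in the present convention. This is what ``bounded below'' means in homological grading, the grading used for chain complexes in the cited source.
\begin{itemize}
\item With this hypothesis (and $A\neq 0$), each $K_p$ is bounded above, because $A^0\otimes_R K_p^i$ is a direct summand of $(A\otimes_R K_p)^i$.
\item Then $K_p$ has the cellular filtration $\sigma_{\ge N-j}K_p$, $j\ge 0$, starting from the single term $K_p^N[-N]$.
\item By your first part applied with $A=R$, the complex $\operatorname{Hom}^\bullet_A(A\otimes_R K_p,N)\cong\operatorname{Hom}^\bullet_R(K_p,N)$ is acyclic for every acyclic $N$.
\item Your Steps 2 and 3 then apply verbatim, with no two-parameter reindexing needed.
\end{itemize}
This corrected version covers the paper's use for $\K_{\A}$ when $\X$ has finite length, since the pieces $\B^{\vee}_{-(m+1)}$ are then bounded complexes. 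Its hypothesis fails for $\X$ of infinite length, where those pieces are unbounded above.
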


\section{Koszul Duality Equivalence}

\subsection{Resolutions of $\mathcal{O}_X$}
Let 
\begin{align*}
\X := ( 0 \to \E^{-n} \to \cdots \to \E^{-1} \to \E^0 \to 0 )
\end{align*}
be a complex consisting of locally free sheaves of finite rank on $X$, where $\E^{-i}$ has cohomological degree $-i$ and internal degree $1$. We consider a sheaf of graded dg-algberas 
\begin{align*}
\B := \Sym \X
\end{align*}
and we define its Koszul dual $\A$ to be
\begin{align*}
\A := \Sym (\X^{\vee}[-1]) = \Sym(0 \to (\E^0)^{\vee} \to (\E^{-1})^{\vee} \to \cdots \to (\E^{-n})^{\vee}\to 0),
\end{align*}
where $(\E^{-i})^{\vee}$ has cohomological degree $i+1$ and internal degree $-1$. In this section, we introduce two dg-modules $\K_{\A}$ and $\K_{\B}$ quasi-isomorphic to $\mathcal{O}_X$ which is K-projective at each stalk. We only treat the case when $\X$ has finite length in this section. The general case is discussed in the next section, since this case is a modifed version of this section.

First, for a (not nescessarily graded) dg-$\A$-module $\M$ and a dg-$\B$-module $\mathcal{N}$, we define a new $(\B, \A)$-bimodule $\M \otimes^K \mathcal{N}$. This is, as graded $\str$-module, defined to be $\M \otimes^{K} \mathcal{N} = \M \otimes_{\str} \mathcal{N} $. Its differential is the sum $d_{\M \otimes_{\str}\mathcal{N}} + d'$, where $d_{\M \otimes_{\str}\mathcal{N}}$ is the usual differential of the tensor product of complexes $\M \otimes_{\str} \mathcal{N}$. The second one is the composition
\begin{align*}
\M \otimes_{\str} \mathcal{N} \to \M \otimes_{\str} \mathcal{N} \to \M \otimes_{\str} \X \otimes_{\str} \X^{\vee} \otimes_{\str} \mathcal{N} \to \M \otimes_{\str} \mathcal{N},
\end{align*}
where the first map is sign adjustment, a section $m \otimes n$ of $\M \otimes \mathcal{N}$ is mapped to $(-1)^{|m|}m \otimes n$. The second one is induced by the natural map $\mathcal{O}_X \to \Hom_{\str}(\X,\X) \cong \mathcal{X} \otimes_{\str} \mathcal{X}^{\vee}$. The last one is multiplication, using the natural action of $\mathcal{X} \subset \Sym \X = \B$ on $\M$, and $\mathcal{X}^{\vee} \subset \Sym (\X^{\vee}[-1]) = \A$ on $\mathcal{N}$. To see $d_{\M \otimes_{\str} \N} + d'$ defines a differential, we describe $d'$ more explicitly at each stalk. Let $p \in X$ be any point and $\{x_{\alpha}\}_{\alpha}$ be a basis of $\X_p$ over $\mathcal{O}_{X,p}$ and $\{x_{\alpha}^*\}$ be its dual. Then, the morphism $\mathcal{O}_{X,p} \to \X_p \otimes \X_p^{\vee}$ is given by $1 \mapsto \sum_{\alpha}x_{\alpha} \otimes x_{\alpha}^*$. Hence for any $m \otimes n \in \M_p \otimes_{\str} \N_p$, the Koszul differential $d'$ on the stalk at $p$ can be described as
\begin{align*}
d'_p(m \otimes n) = (-1)^{|m|}\sum_{\alpha} mx_{\alpha} \otimes x_{\alpha}^* n.
\end{align*}
Using this formula, we can show that $(d_{\M \otimes_{\str} \N}+d')^2=0$ by direct calculation (for detail, see Lemma \ref{lem:diff}). Hence $d_{\M \otimes_{\str} \N}+d'$ defines a differential. We call $d'$ the Koszul differential of $\M \otimes^K \N$.

The $(\A,\B)$-bimodule $\N \otimes^K \M$ is defined similarly, switching the roles of $\M$ and $\N$ in the construction above, with slight adjustment of signs. More specifically, the Koszul differential of $\N \otimes^K \M$ is given by the composition
\begin{align*}
\N \otimes_{\str} \mathcal{M} \to \N \otimes_{\str} \mathcal{M} \to \N \otimes_{\str} \X^{\vee} \otimes_{\str} \X \otimes_{\str} \mathcal{M} \to \N \otimes_{\str} \mathcal{M},
\end{align*}
where the first map is sign adjustment, a section $n \otimes m$ of $\N \otimes \mathcal{M}$ is mapped to $-(-1)^{|n|}n \otimes m$. The second one is induced by the natural map $\mathcal{O}_X \to \Hom_{\str}(\X,\X) \cong \mathcal{X}^{\vee} \otimes_{\str} \mathcal{X}$, and the last one is multiplication.

Now let us define a dg-$\A$-module $\A^{\vee}$ and a dg-$\B$-module $\B^{\vee}$ as graded dual of $\str$-modules, that is,
\begin{align*}
&\A^{\vee} :=\Hom_{\str,gr}(\A,\str), \\
&\B^{\vee} :=\Hom_{\str,gr}(\B,\str).
\end{align*}
Its bigrading is given as follows:
\begin{align*}
&\A^{\vee} = \bigoplus_{i,j\in \mathbb{Z}}(\A^{\vee})^i_{j}, \ (\A^{\vee})^i_j := \Hom_{\str}(\A^{-i}_{-j},\str),\\
&\B^{\vee} = \bigoplus_{i,j \in \mathbb{Z}}(\B^{\vee})^i_{j}, \ \ (\B^{\vee})^i_j := \Hom_{\str}(\B^{-i}_{-j},\str),
\end{align*}
where the upper indices are cohomological grading, and the lower ones are internal gradings.
\begin{rem}
Since each term of $\A$ is locally free of finite rank, $\A^{\vee}$ coincides with the ungraded dual $\Hom_{\str}(\A,\str)$. On the other hand, in general $\B^{\vee}$ is a strict sub dg-$\B$-module of $\Hom_{\str}(\B,\str)$. However, if we assume $\E_0=0$, we have $\B^{\vee} = \Hom_{\str}(\B,\str)$.
\end{rem}

Then we define
\begin{align*}
&\K_{\A} := \A \otimes^K \B^{\vee},\\
&\K_{\B} := \B \otimes^K \A^{\vee}.
\end{align*}
Note that both $\K_{\A}$ and $\K_{\B}$ have an internal grading induced by $\A$ and $\B$. By this grading, we have $\K_{\A,p}=0$ for $p>0$ and $\K_{\B,p}=0$ for $p<0$.

Since $\K_{\A}^0 = (\A \otimes^K \B^{\vee})^0 =\str$, there is a natural homomorphism of dg-$\A$-modules $\K_{\A} \to \str$, which is the identity map at $0$-th degree, and zero map othewize. Similarly, since $\K_{\B}^0 = (\B \otimes^K \A^{\vee})^0 =\str$, there is a natural homomorphism of dg-$\B$-modules $\str \to \K_{\B}$ defined in the same way. These two homomorphisms are known to be indeed quasi-isomorphisms:

\begin{prop} \label{qis1} ( \cite[Proposition 1.3.1]{MR2})
\ 
\begin{enumerate}
\item The homomorphism of dg-$\A$-modules $\K_{\A} \to \str$ is a quasi-isomorphism.
\item The homomorphism of dg-$\B$-modules $\str \to \K_{\B}$ is a quasi-isomorphism.
\end{enumerate}
\end{prop}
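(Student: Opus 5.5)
Both statements are local, since quasi-isomorphism can be checked on stalks. So I would fix a point $p \in X$, shrink to an affine neighbourhood on which every $\E^{-i}$ is free, and choose a homogeneous basis $\{x_\alpha\}$ of $\X_p$ with dual basis $\{x_\alpha^*\}$. Then $\B_p$ is the free graded-commutative algebra on the $x_\alpha$, with internal differential $d_{\B}$ induced by $d_{\X}$, and $\A_p$ is the free graded-commutative algebra on the $x_\alpha^*$ (shifted by one). In this setting $\K_{\A,p}=\A_p\otimes\B_p^{\vee}$, with total differential
\begin{align*}
d_{\A}\otimes 1 + 1\otimes d_{\B^\vee} + d',
\end{align*}
is a twisted tensor product. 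The plan is to compare it with the classical Koszul complex.

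\textbf{Step 1: reduction to the case $d_{\X}=0$ via a filtration.} Filter $\K_{\A}$ by a degree that makes $d'$ the leading term and the internal differentials induced by $d_{\X}$ subleading. Concretely, use the internal grading together with the polynomial degree in $\A$: the pieces $F_q=\bigoplus_{j\le q}\A_{-j}\otimes\B^\vee$. The internal grading on $\K_{\A}$ is respected by all three pieces of the differential, since $d'$ has internal degree $1-1=0$. So $\K_{\A}=\bigoplus_{j\le 0}(\K_{\A})_j$ decomposes as a direct sum of complexes. It then suffices to show that $(\K_{\A})_0=\str$ and that $(\K_{\A})_j$ is acyclic for $j<0$.

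Each $(\K_{\A})_j=\bigoplus_{a-b=j}\Sym^a(\X^\vee[-1])\otimes(\Sym^b\X)^\vee$ is a finite sum, because $\X$ has finite length. On it, filter by $a$: $d'$ raises $a$ by one, while $d_{\A}$ and $d_{\B^\vee}$ preserve $a$. The associated spectral sequence, or equivalently a finite filtration argument, has $E_0$-differential equal to the part of the differential preserving $a$. To make $d'$ the leading term instead, I would rather use the filtration by cohomological degree of the $\X^\vee$-generators. The cleanest route is to first treat $d_\X=0$, where everything reduces to the acyclicity of $\bigoplus_{a+b=n}\Sym^a(V[-1])\otimes(\Sym^bV)^\vee$ under $d'$. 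Then deduce the general case by filtering by the "weight" $\sum i\cdot(\text{number of }\E^{-i}\text{ factors})$, which the $d_\X$-terms strictly change in a fixed direction while $d'$ preserves it. This filtration is finite in each internal degree, so convergence is automatic.

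\textbf{Step 2: the Koszul computation.} With $d_\X=0$ and $V=\X_p$ free, $(\Sym^{a}(V[-1])\otimes(\Sym^bV)^\vee, d')$ for fixed $a+b=n$ is the (dual) graded-commutative Koszul complex of $V$. It is acyclic for $n>0$ by the standard argument: over $\mathbb{Q}$ one uses the Euler-type homotopy $h(m\otimes n)=\sum \partial_{x_\alpha^*}m\otimes x_\alpha^\vee n$, for which $d'h+hd'=n\cdot\mathrm{id}$. In positive characteristic, I would instead split $V$ into rank-one summands and use the tensor-product decomposition, since $\Sym$ of a direct sum is the tensor product, and the Koszul complex of a line, $\Sym(L)\otimes\wedge(L)$ in one of the two parity cases, is directly seen to resolve the base. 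The $n=0$ piece is $\str$. Part (2) is handled identically, with $\K_{\B}=\B\otimes^K\A^\vee$ and the roles of symmetric and exterior factors swapped.

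\textbf{Main obstacle.} The hardest step is bookkeeping the signs and checking that the chosen filtration makes $d'$ exactly the associated graded differential, with the remaining terms strictly shifting the filtration, uniformly in characteristic. I would verify this first on the explicit stalk formula for $d'_p$ given above.
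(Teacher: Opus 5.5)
Your argument is correct. The paper gives no proof of its own here: it quotes [MR2, Proposition 1.3.1], and the Remark after it records the graded form $H^q(\K_{\A,p})=\str$ for $p=q=0$ and $0$ otherwise. You prove exactly this graded form in a self-contained way:
\begin{itemize}
\item split $\K_{\A}$ (resp.\ $\K_{\B}$) into internal-degree summands;
\item remove $d_{\X}$ with the weight filtration;
\item factor the $d_{\X}=0$ complex into rank-one Koszul complexes.
\end{itemize}
Compared with the citation, this shows where the finite length of $\X$ is used: the weight filtration is finite on each internal-degree piece. It is also characteristic-free, which matters because \S 3 imposes no condition on $\operatorname{char}k$.

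Three things to tidy up.

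\emph{Indexing.} Since $\A_{-a}=\Sym^a(\X^{\vee}[-1])$ and $(\B^{\vee})_{-b}=(\Sym^b\X)^{\vee}$, the internal-degree-$j$ piece of $\K_{\A}$ is $\bigoplus_{a+b=-j}$, not $\bigoplus_{a-b=j}$. The latter would not even be a finite sum. Your Step 2 already uses the correct indexing.

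\emph{The filtration.} Drop the filtration by $a$. Its associated graded carries $d_{\A}+d_{\B^{\vee}}$, and continuing would require computing $H(\Sym^a\X)$, which has no usable description in general. Commit to the weight filtration and record the check. Let $c_i$ count the tensor factors coming from $\E^{-i}$ on either side, and set $w=\sum_i i\,c_i$.
\begin{itemize}
\item $d'$ preserves every $c_i$.
\item $d_{\A}$ and $d_{\B^{\vee}}$ each raise $w$ by exactly one.
\item For $\K_{\B}$, $d_{\B}$ and $d_{\A^{\vee}}$ each lower $w$ by exactly one.
\end{itemize}
On the internal-degree-$j$ piece, $0\le w\le n|j|$, where $\E^{-n}$ is the last term of $\X$. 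So $\bigoplus_{w\ge p}$ is a finite filtration by subcomplexes whose graded pieces are literally the Koszul complexes for $d_{\X}=0$.

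\emph{Rank-one computation.} Make this the main line rather than a fallback.
\begin{itemize}
\item On the graded dual, $x\cdot(x^b)^{\vee}=\pm(x^{b-1})^{\vee}$ with unit coefficient. The dual of $\Sym$ is of divided-power type, so no factorials appear.
\item Hence each nonzero internal degree of a rank-one factor is $R\xrightarrow{\pm1}R$, which is contractible.
\item The reordering isomorphism $K(V_1\oplus V_2)\cong K(V_1)\otimes K(V_2)$, with Koszul sign $(-1)^{|a_2||f_1|}$, intertwines $d'$ with the tensor-product differential. This settles the sign worry you flagged.
\item A tensor product with a contractible factor is contractible.
\end{itemize}
So neither a K\"unneth argument nor the Euler homotopy is needed.

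A side remark on (2). With $\B$ acting on the left factor of $\K_{\B}=\B\otimes^K\A^{\vee}$, the inclusion $\str\to\K_{\B}$ is not $\B$-linear: $x\cdot(1\otimes1)=x\otimes1\neq0$ for $x\in\X$. The $\B$-linear map is the projection $\K_{\B}\to(\K_{\B})_0=\str$, and that is the one used later in $\operatorname{REnd}_{\B}(\str)\cong\operatorname{Hom}_{\B}(\K_{\B},\str)$.

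Similarly, the paper's identification $\K_{\A}^0=\str$ holds only for the internal-degree-$0$ summand; the cohomological degree-$0$ part is $(\Sym\E^0)^{\vee}$. You correctly work with internal degree $0$. Your decomposition shows $\str$ is a direct-summand subcomplex with acyclic complement, so both the inclusion and the projection are quasi-isomorphisms. Your argument therefore covers the statement under either reading.
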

\begin{rem}
In \cite{MR2}, this proposition is proved in the graded sense. More precisely, we have
\begin{align*}
H^q(\K_{\A,p}) =
\begin{cases}
\str\ \ \ &(p=q=0)\\
0 &(otherwize)
\end{cases}
\end{align*}
for any $p,q \in \mathbb{Z}$.
\end{rem}
\begin{prop} \label{prop:proj1}
Let $p \in X$ be any point. Then we have
\begin{enumerate}
\item $\K_{\A,p}$ is a K-projective dg-module over $\A_p$.
\item $\K_{\B,p}$ is a K-projective dg-module over $\B_p$.
\end{enumerate}
\end{prop}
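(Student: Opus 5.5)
We prove both parts by exhibiting split filtrations and applying Lemma \ref{lem.spa}. Fix $p \in X$ and set $R = \mathcal{O}_{X,p}$. Since $\X$ has finite length and each $\E^{-i}$ is locally free of finite rank, we may choose an $R$-basis of $\X_p$ adapted to the cohomological grading. Then $\A_p$ and $\B_p$ are free graded-commutative algebras on finitely many generators. Moreover, $\A^{\vee}_p$ and $\B^{\vee}_p$ are graded $R$-duals of $R$-modules that are free of finite rank in each bidegree, so they are free in each bidegree as well.

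For (1), write $\K_{\A,p} = \A_p \otimes^K \B^{\vee}_p$. As a graded $\A_p$-module, ignoring differentials, this is $\A_p \otimes_R \B^{\vee}_p$, which is free. The full differential is $d_{\A}\otimes 1 \pm 1\otimes d_{\B^{\vee}} + d'$, where
\begin{align*}
d'(a\otimes f) = \pm \sum_\alpha a x^*_\alpha \otimes x_\alpha f .
\end{align*}
Multiplication by $x_\alpha \in \B$ on $\B^{\vee}$ lowers the polynomial degree in $\B^{\vee}$ by one: it raises the internal degree of $\B^{\vee}$ by $1$, toward $0$. The internal differential $d_{\B^{\vee}}$ preserves the polynomial degree. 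We therefore filter by polynomial degree,
\begin{align*}
F_q := \A_p \otimes_R (\B^{\vee}_p)_{\ge -q},
\end{align*}
that is, by the part of $\B^{\vee}_p$ of internal degree at least $-q$. Each $F_q$ is stable under the differential, since $d'$ sends $\A_p\otimes(\B^\vee)_{-j}$ into $\A_p\otimes(\B^\vee)_{-j+1}$. The filtration is exhaustive, and the quotients are
\begin{align*}
F_q/F_{q-1} \cong \A_p \otimes_R (\B^{\vee}_p)_{-q},
\end{align*}
with differential $d_{\A}\otimes 1 \pm 1 \otimes d_{\B^{\vee}}$. Here $K_q = (\B^{\vee}_p)_{-q}$ is a complex of free $R$-modules. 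It is finite free, because $\B_q$ is a finite sum of tensor products of exterior and symmetric powers of the finitely many $\E^{-i}$.

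This gives a split filtration, but not a cellularly split one, so we still need bounded-belowness. The polynomial degree $q$ part of $\B$ lies in cohomological degrees $[-nq,0]$, so $(\B^{\vee})_{-q}$ lies in degrees $[0,nq]$. Also $\A$ is concentrated in degrees $\ge 0$, since its generators have degrees $1,\dots,n+1$. Hence $\K_{\A,p}$ is bounded below (in degrees $\ge 0$), and Lemma \ref{lem.spa} gives K-projectivity.

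For (2), write $\K_{\B,p} = \B_p \otimes^K \A^{\vee}_p$. The same filtration by internal degree of $\A^{\vee}_p$, i.e. by polynomial degree in $\A$, is exhaustive and stable, with free subquotients $\B_p \otimes_R (\A^{\vee}_p)_{q}$. The problem is that $\K_{\B,p}$ is not bounded below: $\B$ is concentrated in degrees $\le 0$, and $\A^\vee$ lies in degrees $\le 0$ as well. So this filtration alone only yields a split filtration, and we must refine it to a cellularly split one. I expect this to be the main obstacle.

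My plan is to refine each subquotient $\B_p \otimes_R (\A^{\vee}_p)_q$ further. Since $(\A^{\vee}_p)_q$ is a bounded complex of finite free $R$-modules, we filter it by its stupid (brutal) truncations $\sigma_{\ge m}$. This produces subquotients $\B_p \otimes_R (\A^\vee_p)^m_q$ with zero differential on the $R$-part. To make this one global filtration indexed by $\mathbb{N}$, we order the pairs $(q,m)$ lexicographically, with $m$ decreasing from $0$ down to its minimum $-(n+1)q$. This gives a well-ordered exhaustive filtration whose successive quotients are $\B_p\otimes_R$ (a free module with zero differential).

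The remaining point is to check stability under $d'$. The part $d'$ moves from polynomial degree $q$ to $q-1$, which is already in the earlier piece regardless of $m$. The part $d_{\A^\vee}$ raises $m$ within the same $q$, so it also lands in an earlier piece under the chosen order. With this checked, the first statement of Lemma \ref{lem.spa} applies and gives K-projectivity of $\K_{\B,p}$. By symmetry, the same refinement could be used for (1) as well, making the bounded-below argument unnecessary.
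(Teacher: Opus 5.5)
Your proof is correct and follows the paper's strategy. For $\K_{\A,p}$ the paper uses exactly your filtration by the internal degree of $\B^{\vee}$ plus bounded-belowness, and for $\K_{\B,p}$ it likewise filters by polynomial degree in $\A^{\vee}$ and refines to a cellularly split filtration, but it refines by ordering the summands $\K_{\B}(i_0,\dots,i_n)$ reverse-lexicographically within each total degree (using that $d_{\A^{\vee}}$ sends $(i_j,i_{j+1})$ to $(i_j+1,i_{j+1}-1)$), whereas your brutal truncation groups these summands by $\sum_j (j+1)i_j$ and needs only that $d_{\A^{\vee}}$ raises cohomological degree, which is slightly cleaner.
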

\begin{proof}
To simplify the notation, we will omit the symbol ``$p$'' in this proof and treat $\A$ and $\B$ as usual dg-algebras, and we define 
\begin{align*}
\K_{\B}(i_0,\cdots,i_n) :=\B \otimes_{\str} S^{i_0}((\E^{0})^{\vee})^{\vee} \otimes_{\str} \cdots \otimes_{\str} S^{i_n}((\E^{-n})^{\vee})^{\vee},
\end{align*}
where $S^i((\E^j)^{\vee})$ is defined by
\begin{align*}
S^i((\E^j)^{\vee}) :=
\begin{cases}
\Sym^i ((\E^j)^{\vee})\ &\text{if $j$ is odd,}\\
\wedge^i (\E^j)^{\vee}\ &\text{if $j$ is even.}
\end{cases}
\end{align*}
for $j=0,-1,\cdots -n$. Then, $\K_{B}$ is decomposed to a direct sum
\begin{align*}
\K_{\B} = \bigoplus_{i_0,\cdots,i_n} \K_{\B}(i_0,\cdots,i_n)
\end{align*}
as an $\str$-module.

In order to prove the claim, by Lemma \ref{lem.spa}, it is enough to find a filtration of dg-submodules of $\K_{\B}$
\begin{align*}
0=F_{-1} \subset F_0 \subset F_1 \subset \cdots \subset F_m \subset \cdots \subset \K_{\B}
\end{align*}
which satisfies that $\bigcup_m F_m = \K_{\B}$ and each subquotient $F_{m+1}/F_m$ is free as $\A$-dg modules.

In order to show how to construct such a filtration, first we demonstrate how the proof goes when $n=1$. In the case of $n=1$, $\K_{\B}$ is the direct sum of the form
\begin{align*}
\K_{\B} = \B \otimes_{\str} \left( \bigoplus_{i,j} \left(\wedge^i(\E^0)^\vee \right)^\vee \otimes_{\str} (\Sym^j(\E^{-1})^\vee)^\vee \right) = \bigoplus_{i,j}\K_{\B}(i,j)
\end{align*}
as a complex of $\mathcal{O}_X$-modules. The differential at $\K_{\B}(i,j)$ is the sum of the following two maps:\\
(1) Differtentials of the usual tensor product $\B \otimes_{\str} \A^{\vee}.$

Due to the Leibnitz rule, $\K_{\B}(i,j)$ is mapped to $\K_{\B}(i+1,j-1)$ by this type of differentials.\\
(2) Koszul differtentials.

By definition, Koszul differential is given by multiplication of certain sections of $\E^0$ and $\E^{-1}$. Hence $\K_{\B}(i,j)$ is mapped to $\K_{\B} (i-1,j) \oplus \K_{\B} (i,j-1).$

In total, differentials of $\K_{\B}$ can be depicted as the following diagram:
\begin{align*}
\xymatrix{
	\K_{\B}(0,0) \\%\ar[r] \ar@{^{(}->}[d] &\operatorname{Hom}_\bullet(X,Y)\\
	\K_{\B}(1,0) \ar@{..>}[u] & \K_{\B}(0,1) \ar[l] \ar@{..>}[lu]\\
	\K_{\B}(2,0) \ar@{..>}[u] & \K_{\B}(1,1) \ar@{..>}[u]\ar@{..>}[lu]\ar[l] & \K_{\B}(0,2) \ar@{..>}[lu]\ar[l]\\
	\ \ar@{..>}[u] & \ \ar@{..>}[u]\ar@{..>}[lu] & \ \ar@{..>}[u]\ar@{..>}[lu] & \ \ar@{..>}[lu]\\
	\  & \vdots & \  & \ 
}
\end{align*}
where the dotted arrows are Koszul differentials, and the solid arrows are the differentials induced by the tensor product. Then, we pick a filtration $\{F_m\}$ so that each $F_m$ are dg-submodules of $\K_{\A}$. Since $\A$-actions are closed at each term, it is enough to define $F_m$ so that there are no arrows going outside of $F_m$. More explicitly, in this case, if we define
\begin{align*}
&F_{-1} = 0, F_0=\K_{\B}(0,0), \\
&F_1=F_0 \oplus \K_{\B}(1,0), F_2=F_1\oplus \K_{\B}(0,1) \\
&F_3=F_2 \oplus  \K_{\B}(2,0), F_4=F_3 \oplus  \K_{\B}(1,1), F_5=F_4 \oplus  \K_{\B}(0,2), \cdots
\end{align*}
then clearly $F_m/F_{m-1}$ is a form of $\K_{\B}(i,j)$, which is a tensor product of $\B$ and some shifts of free $\str$-modules. Hence $\{F_m\}$ gives a desired filtration.

The general cases are entirely similar but rather complicated. As in the construction above, we will look at the differentials at direct summands $\K_{\B}(i_0,\cdots,i_n)$. By the same arguments of $n=1$, outgoing differential at $\K_{\B}(i_0,\cdots,i_n)$ is
\begin{align*}
\K_{\B}(i_0,\cdots,i_n) \to &\K_{\B}(i_0+1,i_1-1,\cdots,i_n) \oplus \K_{\B}(i_0,i_1+1,i_2-1,\cdots,i_n) \\
&\oplus \cdots \oplus \K_{\B}(i_0,\cdots,i_{n-1}+1,i_n-1) \\
&\oplus \K_{\B}(i_0-1,\cdots,i_n) \oplus \K_{\B}(i_0,i_1-1,\cdots,i_n) \\
&\oplus \cdots \oplus \K_{\B}(i_0,\cdots,i_n-1),
\end{align*}
and incoming differential at $\K_{\B}(i_0,\cdots,i_n)$ is
\begin{align*}
 &\K_{\B}(i_0-1,i_1+1,\cdots,i_n) \oplus \K_{\B}(i_0,i_1-1,i_2+1,\cdots,i_n) \\
&\oplus \cdots \oplus \K_{\B}(i_0,\cdots,i_{n-1}+1,i_n-1) \\
&\oplus \K_{\B}(i_0+1,\cdots,i_n) \oplus \K_{\B}(i_0,i_1+1,\cdots,i_n) \\
&\oplus \cdots \oplus \K_{\B}(i_0,\cdots,i_n+1)\\
&\to \K_{\B}(i_0,\cdots,i_n).
\end{align*}
Summarizing this, for two indices $(i_0,\cdots,i_n)$ and $(j_0,\cdots,j_n)$, there is no differential $\K_{\B}(i_0,\cdots,i_n) \to \K_{\B}(j_0,\cdots,j_n)$ if $\sum_{s} i_s = \sum_{s}j_s$ and $(i_0,\cdots,i_n) > (j_0,\cdots,j_n)$ in the lexicographic order, or $\sum_{s} i_s < \sum_{s}j_s$. 

Now we construct the filtration $\{F_m\}$. First, let $F_{-1}=0$ and $F_0 = \K_{\B}(0,\cdots,0)$. Consider the set of indices
\begin{align*}
I_{\ell}:=\{ (i_0,\cdots,i_n) | i_1+\cdots+i_n=\ell , \K_{\B}(i_0,\cdots,i_n) \neq 0\},
\end{align*}
and we define a total order on $\bigcup_{\ell=0}^\infty I_{\ell}$ as follows. For $\bm{i} \in I_{s}$ and $\bm{j} \in I_{t}$,
\begin{align*}
\bm{i} \geq \bm{j} \overset{\text{def.}}{\Longleftrightarrow}
\begin{cases}
s>t,\ \text{or} \\ 
s=t \text{ and } \bm{i} \leq \bm{j} \ \text{in the lexicographic order on}\  I_{s}.
\end{cases}
\end{align*}
Then, we take an order-preserving bijection $\mathbb{Z}_{\geq 0} \ni m \mapsto \bm{i}_m \in \bigcup_{\ell=0}^\infty I_{\ell}$ (such a map exists since each $I_{\ell}$ is finite), and define $F_m:=\bigoplus_{j=0}^m\K_{\B}(\bm{i}_j)$. Then, by the description of differentials, each $F_m$ forms a dg-$\B$-submodule of $\K_{\B}$ and clearly $\{F_m\}$ gives a desired filtration on $\K_{\B}$ since each $\K_{\B}(i_0,\cdots,i_n)$ is a tensor product of $\B$ and a locally free sheaf.

The proof for $\K_{\A}$ is much simpler, since $\K_{\A}$ is bounded below. By Lemma \ref{lem.spa}, in this case, it is enough to construct a filtration $\{F_m\}$ of $\K_{\A}$ such that $F_{m+1}/F_m \cong \A \otimes K_m$, where $K_m$ is a complex consisting of free $\mathcal{O}_X$-modules, which necesarily does not have zero differentials. Now, $\K_{\A}$ can be written as
\begin{align*}
\A \otimes \B_{0}^{\vee} \leftarrow \A \otimes \B_{-1}^{\vee} \leftarrow \A \otimes \B_{-2}^{\vee} \leftarrow \cdots
\end{align*}
where the horizontal arrows stand for Koszul differentials. If we take $F_m=\bigoplus_{i=0}^m \A \otimes \B_{-i}^{\vee}$, then $F_{m+1}/F_m\cong \A \otimes \B_{-(m+1)}^{\vee}$ and $\B_{-(m+1)}^{\vee}$ is a complex consisting of free $\mathcal{O}_X$-modules. Hence $\{F_m\}$ gives a desired filtration. Thus $\K_{\A}$ is K-projective over $\A$.
\end{proof}

\subsection{Complex of Infinite Length}

In this section, we treat the case when the length of $\X$ is infinite. In this case, we need a modification to the discussions in the last section. For example, if $\X$ has infinite length, we cannot obtain the isomorphism $\Hom(\X,\X) \cong \X \otimes_{\str} \X^{\vee}$ due to the unboundedness of $\X$, so the construction in the previous section does not work.

Let 
\begin{align*}
\X:=(\cdots \to \E^{-n} \to\cdots \to \E^{-1} \to \E^0 \to 0)
\end{align*}
be a complex of possibly infinite length consisting of locally free sheaves of finite rank on $X$, where $\E^i$ has cohomological degree $i$ and internal degree $1$. We define a sheaf of graded dg algebras $\B = \Sym \X$ and $\A=\Sym (\X^{\vee}[-1])$. We set $\X^{(n)}$ as the truncation of $\X$:
\begin{align*}
\X^{(n)}:= (0 \to \E^{-n} \to\cdots \to \E^{-1} \to \E^0 \to 0).
\end{align*}
As in the previous section, we will define a $(\B,\A)$-bimodule $\M \otimes^K \mathcal{\N}$ for a dg-$\B$-module $\M$ and a dg-$\A$-module $\N$, but we require that $\M \in C^+(\B)$ or $\N \in C^-(\A)$. 

First consider the case when $\M \in C^+(\B)$, and assume that $\M^i = 0$ for $i < N$. As a graded $\str$-module, we define $\M \otimes^K \mathcal{N}= \M \otimes_{\str} \mathcal{N}$. Its differential is the sum $d_{\M \otimes_{\str} \mathcal{N}} +d'$, but the definition of $d'$ is modified as follows. For a section $m\otimes n$ of $\M \otimes_{\str} \mathcal{N}$, $d'(m \otimes n)$ is defined to be the following compositions
\begin{align*}
\M \otimes_{\str} \mathcal{N} &\to \M \otimes_{\str} \mathcal{N} \to \M \otimes_{\str} \X^{(N-|m|)} \otimes_{\str} (\X^{(N-|m|)})^{\vee} \otimes_{\str} \mathcal{N} \to \M \otimes_{\str} \mathcal{N},
\end{align*}
where the first map is sign adjustment, a section $m \otimes n$ is mapped to $(-1)^{|m|}m \otimes n$. The second one is induced by the natural map 
\begin{align*}
\mathcal{O}_X \to \Hom_{\str}(\X^{(N-|m|)},\X^{(N-|m|)}) \cong \mathcal{X}^{(N-|m|)} \otimes_{\str} (\mathcal{X}^{(N-|m|)})^{\vee}.
\end{align*}
Here, note that $\X^{(N-|m|)}$ is bounded. The last one is multiplication, using the natural action of $\mathcal{X} \subset \Sym \X = \B$ on $\mathcal{M}$, and $\mathcal{X}^{\vee} \subset \Sym (\X^{\vee}[-1]) = \A$ on $\N$.
In fact, this defines a differential. Pick any point $p \in X$ and let $\{x_\alpha\}_{\alpha}$ be a basis of $\X_{p}$ and $\{x_\alpha^*\}_{\alpha}$ be its dual. Then, $d'$ can be written as follows:
\begin{align}
d'(m \otimes n) = (-1)^{|m|}\sum_{\alpha} mx_{\alpha} \otimes x_{\alpha}^* n.
\end{align} 
Here, note that the sum is a finite sum. In fact, since $\M^i=0$ for $i < N$, we have $mx_{\alpha}=0$ for $|x_{\alpha}| < N-|m|$. As in the case of finite length, we can check that this defines a differential by direct calculations. More precisely, the following holds:
\begin{lem} \label{lem:diff}
In the situation above, we have the following formulae.
\begin{enumerate}
\item $d'^2=0$. 
\item $d_{\M \otimes_{\str} \N} \circ d' + d' \circ d_{\M \otimes_{\str} \N}=0$.
\end{enumerate}
Consequently, we have $(d_{\M \otimes_{\str} \N}+d')^2=0$.
\end{lem}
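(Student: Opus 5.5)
The plan is to reduce to a stalk computation and work with the explicit formula (3.1). Both statements are local identities of $\str$-linear maps, so fix a point $p$, a homogeneous basis $\{x_\alpha\}$ of $\X_p$ and the dual basis $\{x_\alpha^*\}$. The relevant sums are finite, because $mx_\alpha=0$ once $|x_\alpha|<N-|m|$, where $N$ is the lower bound of $\M$. This lets us treat $d'$ as if $\X$ were the bounded truncation $\X^{(K)}$ for $K$ large relative to the degrees involved. One check is needed: the truncation index $N-|m|$ changes as $|m|$ changes, but terms with $|x_\alpha|<N-|m|$ vanish anyway. Hence on any given element, $d'$ may be computed with one fixed large truncation, and $d'$, $d'^2$ and the anticommutators all agree with the formulas for a bounded complex. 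Throughout, $|x_\alpha^*|=1-|x_\alpha|$, since $\X^\vee[-1]$ shifts degree.

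For (1), compute
\begin{align*}
d'^2(m\otimes n)=\pm\sum_{\alpha,\beta}(-1)^{\epsilon_{\alpha\beta}}\, m x_\alpha x_\beta\otimes x_\beta^* x_\alpha^* n,
\end{align*}
keeping track of the sign $(-1)^{|m|}$, the sign $(-1)^{|m|+|x_\alpha|}$ from the second application, and the Koszul sign from moving $x_\beta^*$ past $x_\alpha^*$ (these act on $n$ in the order $x_\beta^*x_\alpha^*$). In $\B$ we have $x_\alpha x_\beta=(-1)^{|x_\alpha||x_\beta|}x_\beta x_\alpha$. In $\A$, $x_\beta^*x_\alpha^*=(-1)^{(1-|x_\alpha|)(1-|x_\beta|)}x_\alpha^*x_\beta^*$. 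Swapping $\alpha\leftrightarrow\beta$ in the double sum should therefore produce an overall factor of $-1$, which is where the degree shift by one enters. The diagonal terms $\alpha=\beta$ vanish by the same antisymmetry: either $x_\alpha^2=0$ in $\B$ when $|x_\alpha|$ is odd, or $(x_\alpha^*)^2=0$ in $\A$ when $|x_\alpha|$ is even. This gives $d'^2=0$.

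For (2), expand $d\,d'+d'\,d$ on $m\otimes n$ using the Leibniz rules in $\M$ and $\N$. The terms $d_\M(m)x_\alpha\otimes x^*_\alpha n$ and $m x_\alpha\otimes x_\alpha^* d_\N(n)$ cancel pairwise against the corresponding terms of $d'd$ once the signs are tracked. What remains is
\begin{align*}
\sum_\alpha \pm\, m\, d_\X(x_\alpha)\otimes x_\alpha^* n \;+\; \sum_\alpha \pm\, m x_\alpha\otimes d_{\X^\vee[-1]}(x_\alpha^*)\, n .
\end{align*}
Writing $d_\X(x_\alpha)=\sum_\beta c_{\beta\alpha}x_\beta$, the dual differential satisfies $d(x_\beta^*)=\pm\sum_\alpha c_{\beta\alpha}x_\alpha^*$, with the sign coming from the definition of the dual complex and the shift $[-1]$. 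Reindexing, both sums become $\sum_{\alpha,\beta}c_{\beta\alpha}\, m x_\beta\otimes x_\alpha^* n$, with opposite signs. Invariantly, this expresses that the canonical element $\sum_\alpha x_\alpha\otimes x_\alpha^*$ is a cocycle in $\X\otimes\X^\vee$, i.e. that the identity is a chain map.

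The main obstacle is the sign bookkeeping in (2). In particular, the convention for the differential on $\X^\vee[-1]$ must make the identity element closed, and this has to be compatible with the chosen sign $(-1)^{|m|}$ in $d'$. I would first verify it in the case where $\X$ has one or two terms and then check that the general computation is degree-by-degree identical. The final claim follows because $(d+d')^2=d^2+(dd'+d'd)+d'^2$.
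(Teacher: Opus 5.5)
Your proposal is essentially the paper's proof. Both use the same stalkwise computation in which only finitely many terms survive. Both use the same $\alpha\leftrightarrow\beta$ pairing for (1), with the diagonal terms killed by $x_\alpha^2=0$ or $(x_\alpha^*)^2=0$. For (2), both reduce to the same remainder $\sum_\alpha\bigl(m\,d_{\B}(x_\alpha)\otimes x_\alpha^*n+(-1)^{|x_\alpha|}m x_\alpha\otimes d_{\A}(x_\alpha^*)n\bigr)$, which the paper kills by the same reindexing of matrix coefficients.

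The sign you leave open is fixed in the paper by taking $d_{\A}(x_\beta^*)=-(-1)^{|x_\beta|}\sum_\alpha c_{\beta\alpha}x_\alpha^*$, i.e.\ the differential of $\X^{\vee}$ with no extra sign from the shift. Your caution is warranted: if the shift convention $d_{M[n]}=(-1)^n d_M$ of \S 2 were applied literally to $\X^{\vee}[-1]$, the two sums would add rather than cancel.
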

\begin{proof}
Pick any point $p \in X$. It suffices to show $(1)$ and $(2)$ on the stalk at $p$ and let $\{x_{\alpha}\}_{\alpha \in A}$ be a basis of $\X_p$. Note that the following calucualtions are all finite sums due to the boundedness condition on $\M$.\\
(1) For a section $m \otimes n \in \M \otimes_{\str} \N$, we have
\begin{align*}
d'^2(m \otimes n) &= d'\left( (-1)^{|m|} \sum_{\alpha \in A} mx_{\alpha} \otimes x_{\alpha}^*n \right)\\
&=(-1)^{|m|} \sum_{\alpha \in A} d'( mx_{\alpha} \otimes x_{\alpha}^*n) \\
&=(-1)^{|m|} \sum_{\alpha \in A} (-1)^{|mx_{\alpha}|} \left( \sum_{\beta \in A} mx_{\alpha}x_{\beta} \otimes x_{\beta}^* x_{\alpha}^* n \right)\\
&=\sum_{\alpha,\beta \in A} (-1)^{|x_{\alpha}|}mx_{\alpha}x_{\beta} \otimes x_{\beta}^* x_{\alpha}^* n.
\end{align*}
This is written as
\begin{align*}
\sum_{\alpha,\beta \in A} (-1)^{|x_{\alpha}|}mx_{\alpha}x_{\beta} \otimes x_{\beta}^* x_{\alpha}^* n=&\sum_{\alpha \in A} (-1)^{|x_{\alpha}|}mx_{\alpha}x_{\alpha} \otimes x_{\alpha}^* x_{\alpha}^* n + \sum_{\substack{\alpha,\beta \in A \\ \alpha \neq \beta}} (-1)^{|x_{\alpha}|}mx_{\alpha}x_{\beta} \otimes x_{\beta}^* x_{\alpha}^* n.
\end{align*}
Consider the first term. By the definition of $\A$ and $\B$, if $|x_{\alpha}|$ is odd, then $|x_{\alpha}^*|$ is even and vice versa. Hence we always have $x_{\alpha}^2=0$ or $(x_{\alpha}^*)^2=0$. Thus we obtain
\begin{align*}
\sum_{\alpha \in A} (-1)^{|x_{\alpha}|}mx_{\alpha}x_{\alpha} \otimes x_{\alpha}^* x_{\alpha}^* n = 0.
\end{align*}
Consider the second term. First, this can be written as
\begin{align*}
&\sum_{\substack{\alpha,\beta \in A \\ \alpha \neq \beta}} (-1)^{|x_{\alpha}|}mx_{\alpha}x_{\beta} \otimes x_{\beta}^* x_{\alpha}^* n \\
&= \sum_{\substack{\{ \alpha,\beta \} \subset A \\ \alpha \neq \beta}} \left( (-1)^{|x_{\alpha}|}mx_{\alpha}x_{\beta} \otimes x_{\beta}^* x_{\alpha}^* n + (-1)^{|x_{\beta}|}mx_{\beta}x_{\alpha} \otimes x_{\alpha}^* x_{\beta}^* n \right)\\
&= \sum_{\substack{ \{ \alpha,\beta \} \subset A \\ \alpha \neq \beta}} ((-1)^{|x_{\alpha}|}+(-1)^{|x_{\beta}|+|x_{\alpha}||x_{\beta}|+|x_{\beta}^*||x_{\alpha}^*|})mx_{\alpha}x_{\beta} \otimes x_{\beta}^* x_{\alpha}^* n.
\end{align*}
Here, we used the graded commutative relation $x_{\alpha} x_{\beta} = (-1)^{|x_{\alpha}||x_{\beta}|}x_{\beta} x_{\alpha}$ and $x_{\alpha}^* x_{\beta}^* = (-1)^{|x_{\alpha}^*||x_{\beta}^*|}x_{\beta} ^*x_{\alpha}^*$. Then, as noticed above, we have
\begin{align*}
\ \ \ \ \ \ \ \ \ \ |x_{\beta}^*||x_{\alpha}^*| &\equiv (|x_{\beta}|-1)(|x_{\alpha}|-1) \equiv |x_{\beta}||x_{\alpha}| -|x_{\beta}| - |x_{\alpha}| +1 \pmod 2 .
\end{align*}
From this, we obtain
\begin{align*}
\sum_{\substack{\alpha,\beta \in A \\ \alpha \neq \beta}} (-1)^{|x_{\alpha}|}mx_{\alpha}x_{\beta} \otimes x_{\beta}^* x_{\alpha}^* n &=  \sum_{\substack{\{ \alpha,\beta \} \subset A \\ \alpha \neq \beta}} ((-1)^{|x_{\alpha}|}+(-1)^{|x_{\alpha}|+1})mx_{\alpha}x_{\beta} \otimes x_{\beta}^* x_{\alpha}^* n \\
 &= 0.
\end{align*}
This proves $d'^2=0$. \\
(2) First we calculate $d_{\M \otimes_{\str} \N} \circ d' (m\otimes n)$ and $d' \circ d_{\M \otimes_{\str} \N} (m \otimes n)$ for a section $m \otimes n \in \M \otimes_{\str} \N$:
\begin{align*}
&d_{\M \otimes_{\str} \N} \circ d' (m\otimes n) \\
&= d_{\M \otimes_{\str} \N} \left( (-1)^{|m|}\sum_{\alpha \in A} mx_{\alpha} \otimes x_{\alpha}^*n \right) \\
&= (-1)^{|m|}\sum_{\alpha \in A} d_{\M \otimes_{\str} \N} (mx_{\alpha} \otimes x_{\alpha}^*n) \\
&= (-1)^{|m|}\sum_{\alpha \in A} \left( d_{\M} (mx_{\alpha}) \otimes x_{\alpha}^* n + (-1)^{|mx_{\alpha}|} mx_{\alpha} \otimes d_{\N}(x_{\alpha}^* n) \right) \\
&=  (-1)^{|m|}\sum_{\alpha \in A} \left( d_{\M}(m)x_{\alpha} \otimes x_{\alpha}^* n +(-1)^{|m|}md_{\B}(x_{\alpha}) \otimes x_{\alpha}^* n \right) \\
&\ \ \ \ + (-1)^{|x_{\alpha}|} \sum_{\alpha \in A} \left( mx_{\alpha} \otimes d_{\A}(x_{\alpha}^*)n + (-1)^{|x_{\alpha}^*|} mx_{\alpha} \otimes x_{\alpha}^*d_{\N}(n) \right),\\
&d' \circ d_{\M \otimes_{\str} \N} (m \otimes n)  \\
&= d'(d_{\M}(m) \otimes n + (-1)^{|m|}m \otimes d_{\N}(n))\\
&=(-1)^{|m|+1} \sum_{\alpha \in A} d_{\M}(m)x_{\alpha} \otimes x_{\alpha}^*n + \sum_{\alpha \in A} mx_{\alpha} \otimes x_{\alpha}^*d_{\N}(n).
\end{align*}
Since $|x_{\alpha}|+|x_{\alpha}^*|$ is always odd, we obtain
\begin{align*}
&(d_{\M \otimes_{\str} \N} \circ d' + d' \circ d_{\M \otimes_{\str} \N} )(m \otimes n) \\
&= \sum_{\alpha \in A} \left( md_{\B}(x_{\alpha}) \otimes x_{\alpha}^* n + (-1)^{|x_{\alpha}|} mx_{\alpha} \otimes d_{\A}(x_{\alpha}^*)n \right).
\end{align*}
Let us show that this is indeed zero. To see this, we will write $d_{\B}(x_{\alpha})$ and $d_{\A}(x_{\alpha}^*)$ in more explicit way. Let $\{ x_1^{(N)}, \cdots,  x_{\ell_N}^{(N)} \}$ be a basis of $\E_{-N,p}$ and $\{ (x_1^{(N)})^*, \cdots,  (x_{\ell_N}^{(N)})^* \}$ be its dual. We write
\begin{align*}
d_{\B}(x_i^{(N)}) =\sum_{j=1}^{\ell_{N-1}} a_{ij}^{(N)}x_{j}^{(N-1)},
\end{align*}
for some $a_{ij}^{(N)} \in \mathcal{O}_{X,p}$. Then we have
\begin{align*}
d_{\A}((x_i^{(N)})^*) =-(-1)^n\sum_{j=1}^{\ell_{N+1}} a_{ji}^{(N+1)}(x_{j}^{(N+1)})^*.
\end{align*}
Now, by definition, we have $\{x_{\alpha}\} = \{x_{i}^{(N)}\}$. Hence we have
\begin{align*}
&\sum_{\alpha \in A} \left( md_{\B}(x_{\alpha}) \otimes x_{\alpha}^* n + (-1)^{|x_{\alpha}|} mx_{\alpha} \otimes d_{\A}(x_{\alpha}^*)n \right) \\
&=\sum_{i,N}\left( \sum_{j=1}^{\ell_{N-1}} ma_{ij}^{(N)}x_{j}^{(N-1)} \otimes (x_{i}^{(N)})^* n - \sum_{j=1}^{\ell_{N+1}} mx_{i}^{(N)} \otimes a_{ji}^{(N+1)}(x_{j}^{(N+1)})^*n \right) \\
&=\sum_{i,N}\left( \sum_{j=1}^{\ell_{N-1}} ma_{ij}^{(N)}x_{j}^{(N-1)} \otimes (x_{i}^{(N)})^* n - \sum_{j=1}^{\ell_{N}} mx_{i}^{(N-1)} \otimes a_{ji}^{(N)}(x_{j}^{(N)})^*n \right)\\
&=\sum_{i,N}\left( \sum_{j=1}^{\ell_{N-1}} a_{ij}^{(N)}( mx_{j}^{(N-1)} \otimes (x_{i}^{(N)})^* n) - \sum_{j=1}^{\ell_{N}} a_{ji}^{(N)}( mx_{i}^{(N-1)} \otimes (x_{j}^{(N)})^*n) \right)\\
&=\sum_{N}\left( \sum_{i,j} a_{ij}^{(N)}( mx_{j}^{(N-1)} \otimes (x_{i}^{(N)})^* n) - \sum_{i,j} a_{ji}^{(N)}( mx_{i}^{(N-1)} \otimes (x_{j}^{(N)})^*n) \right) \\
&=0.
\end{align*}
This proves $d_{\M \otimes_{\str} \N} \circ d' + d' \circ d_{\M \otimes_{\str} \N}=0$.
\end{proof}

When $\N \in C^-(\A)$, $\M \otimes^K \N$ is defined similarly. Assume $\N^i =0$ for $i >N'$. Then we set $\M \otimes^K \N = \M \otimes_{\str} \N$ as a graded $\str$-module, and its differential is $d_{\M \otimes_{\str} \N}+d''$, where $d''$ is the Koszul differential defined as follows. For a section $m\otimes n$ of $\M \otimes_{\str} \mathcal{N}$, $d''(m \otimes n)$ is defined to be the following compositions
\begin{align*}
\M \otimes_{\str} \mathcal{N} &\to \M \otimes_{\str} \mathcal{N} \to \M \otimes_{\str} \X^{(N'-|n|)} \otimes_{\str} (\X^{(N'-|n|)})^{\vee} \otimes_{\str} \mathcal{N} \to \M \otimes_{\str} \mathcal{N},
\end{align*}
where the morphisms are the same ones as defined above. Then, $d''$ can be written as
\begin{align}
d''(m \otimes n) = (-1)^{|m|}\sum_{\alpha} mx_{\alpha} \otimes x_{\alpha}^* n
\end{align} 
at each stalk. Here, note that this is a finite sum since $x_{\alpha}^* n$ = 0 if $|x_{\alpha}^*| > N'-|n|$ in this case. Hence $d''$ is defined by the same formula as (3.1), and it follows that $(d_{\M \otimes_{\str} \mathcal{N}}+d'')^2=0$.

An $(\A, \B)$-bimodule $\N \otimes^K \M$ is defined similarly, with the sign modification as in \S 3.1, under the condition $\M \in C^+(\B)$ or $\N \in C^-(\A)$. Then, as in the case of finite length, we define $\K_{\A}$ and $\K_{\B}$ as follows:
\begin{align*}
\K_{\A} := \A \otimes^K \B^{\vee},\\
\K_{\B} := \B \otimes^K \A^{\vee},
\end{align*}
where duals are taken in the graded sense. As in the case of finite length, there are a homomorphism of dg-$\A$-modules $\K_{\A} \to \str$ and a homomorphism of dg-$\B$-modules $\str \to \K_{\B}$. We will show that these homomorphisms give K-projective resolutions of $\str$ at each stalk.
\begin{prop}
\ 

\begin{enumerate}
\item The homomorphism of dg-$\A$-modules $\K_{\A} \to \str$ is a quasi-isomorphism.
\item The homomorphism of dg-$\B$-modules $\str \to \K_{\B}$ is a quasi-isomorphism.
\end{enumerate}
\end{prop}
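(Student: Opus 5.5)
The plan is to reduce to the finite-length case, Proposition \ref{qis1}, by working one internal degree at a time. Both $\K_{\A}$ and $\K_{\B}$ decompose by internal degree, and the differential (usual part plus Koszul part $d'$) preserves internal degree, since $x_\alpha$ has internal degree $1$ and $x_\alpha^*$ has internal degree $-1$. So it suffices to prove the statement on each piece $\K_{\A,p}$ and $\K_{\B,p}$: we need $H(\K_{\A,0})=\str$, $H(\K_{\A,p})=0$ for $p<0$, and the analogous statements for $\K_{\B}$. The question is also local, so we may pass to stalks and work with explicit bases as in Lemma \ref{lem:diff}.

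\textbf{Key step: identify each internal-degree piece with a truncated Koszul complex in each cohomological degree.} Fix an internal degree $p$ and a cohomological degree $q$. An element of $(\K_{\B})^q_p$ lies in $\B_a\otimes (\A^\vee)_{-b}$ with $a-b=p$. The generators of $\B$ and $\A$ have cohomological degrees $\le 0$ and $\ge 1$ respectively, and the cohomological degree of an element of $(\A^\vee)_{-b}$ is at most $-b$. Hence in each fixed bidegree only finitely many of the $\E^{-i}$ can contribute. I would make this precise by showing that the components of bidegree $(q,p)$ and $(q\pm1,p)$ of $\K_{\B}$ coincide with the corresponding components of $\K_{\B}^{(n)}:=\Sym\X^{(n)}\otimes^K (\Sym((\X^{(n)})^\vee[-1]))^\vee$ for $n\gg 0$ (with $n$ depending on $q$ and $p$). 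The same should hold for the differentials, because the truncated Koszul differential in the definition only uses $\X^{(N-|m|)}$, and so agrees with that of $\X^{(n)}$ in this range. Then $H^q(\K_{\B,p})=H^q(\K^{(n)}_{\B,p})$, and the graded form of Proposition \ref{qis1} (the Remark after it) gives $\str$ if $p=q=0$ and $0$ otherwise. The argument for $\K_{\A}$ is identical, using the components of $\A$ built from $(\E^{-i})^\vee$, which sit in cohomological degree $i+1$.

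\textbf{Main obstacle.} The hardest point is the finiteness claim itself: that in a fixed bidegree, generators from $\E^{-i}$ with $i$ large cannot appear. The degrees have opposite signs, and elements of internal degree $a$ in $\B$ can have arbitrarily negative cohomological degree. So the claim for $\K_{\B}$ is not automatic, and the bound must come from pairing against the $\A^\vee$ factor. The naive count is as follows. An element of $\B_a\otimes(\A^\vee)_{-b}$ built from $\E^{-i}$ factors has cohomological degree roughly $\sum(-i_k) + (\text{negative of degrees } \ge b)$, which is $\le -i$ whenever some $\E^{-i}$ factor is present. Fixing $q$ therefore bounds $i\le -q$, and this gives the finiteness. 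I would verify this inequality carefully first, including for $\K_{\A}$, where the $\B^\vee$ factor has nonnegative degrees. If some direction fails, the fallback is to filter by cohomological degree of the $\B$-part and use a spectral sequence argument, comparing with $\X^{(n)}$ via the natural inclusion $\K^{(n)}\to\K$ and a colimit over $n$. Cohomology commutes with this filtered colimit, so it would suffice to know that each $\K^{(n)}\to\K^{(n+1)}$ is a quasi-isomorphism, which Proposition \ref{qis1} supplies.
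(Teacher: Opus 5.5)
Your proposal is correct and is essentially the paper's argument: the paper also observes that, in a bounded range of cohomological degrees, $\K_{\A}$ and $\K_{\B}$ together with their differentials coincide with $\A^{(n)}\otimes^K(\B^{(n)})^{\vee}$ and $\B^{(n)}\otimes^K(\A^{(n)})^{\vee}$ for $n$ large, and then invokes Proposition \ref{qis1}. As your own count shows, the bound $i\le |q|$ from the cohomological degree alone already gives the needed finiteness, so splitting by internal degree is harmless but unnecessary (and in that bookkeeping the internal degrees of the $\B$-factor and the $\A^{\vee}$-factor add rather than subtract, since $\A^{\vee}$ lives in nonnegative internal degrees).
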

\begin{proof}
First we define
\begin{align*}
\B^{(n)} &:= \Sym \X^{(n)} = \Sym (0 \to \E^{-n} \to \cdots \to \E^0 \to 0), \\
\A^{(n)} &:= \Sym ((\X^{(n)})^{\vee}[-1]) =\Sym (0 \to (\E^{0})^{\vee} \to \cdots \to (\E^{-n})^{\vee} \to 0).
\end{align*}
Then we have 
\begin{align*}
(\K_{\A})_{\leq n} = \left(\A^{(n)} \otimes^K (\B^{(n)})^{\vee} \right)_{\leq n}, \\
(\K_{\B})_{\geq -n} = \left(\B^{(n)} \otimes^K (\A^{(n)})^{\vee} \right)_{\geq -n}.
\end{align*}
Hence we obtain, for any $p < n$, 
\begin{align*}
H^p(\K_{\A}) &= H^p((\K_{\A})_{\leq n})\\
&=H^p((\A^{(n)} \otimes^K (\B^{(n)})^{\vee} )_{\leq n})\\
&=H^p(\A^{(n)} \otimes^K (\B^{(n)})^{\vee}),\\
H^{-p}(\K_{\B}) &= H^p((\K_{\B})_{\geq -n})\\
&=H^{-p}((\B^{(n)} \otimes^K (\A^{(n)})^{\vee})_{\geq -n})\\
&=H^{-p}(\B^{(n)} \otimes^K (\A^{(n)})^{\vee}).
\end{align*}
Since $n$ is arbitrary, by Proposition \ref{qis1}, we get the desired result.
\end{proof}

\begin{prop}
Let $p \in X$ be any point. Then we have
\begin{enumerate}
\item $\K_{\A,p}$ is K-projective dg-module over $\A_p$.
\item $\K_{\B,p}$ is K-projective dg-module over $\B_p$.
\end{enumerate}
\end{prop}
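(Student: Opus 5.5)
We adapt the filtration argument of Proposition \ref{prop:proj1} to the case where $\X$ has infinite length. The main change is that the index sets used there become infinite, so we must choose the filtration more carefully. As before, we work at a stalk, drop the symbol $p$, and use Lemma \ref{lem.spa}.

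\emph{Step 1: $\K_{\A}$.} As in the finite case, decompose $\K_{\A}$ by the internal degree of the $\B^{\vee}$-factor:
\begin{align*}
\K_{\A} = \bigoplus_{i \geq 0} \A \otimes_{\str} \B^{\vee}_{-i}.
\end{align*}
The internal differential of $\A \otimes \B^{\vee}$ preserves $i$, and the Koszul differential lowers $i$ by one. Hence $F_m := \bigoplus_{i=0}^m \A \otimes \B^{\vee}_{-i}$ is a filtration by dg-$\A$-submodules with $F_m/F_{m-1} \cong \A \otimes_{\str} \B^{\vee}_{-m}$. Each $\B^{\vee}_{-m}$ is a complex of finite free modules: the internal-degree-$m$ part of $\Sym \X$ only involves $m$ factors of $\X$, so its terms are finite sums of tensor products of the $\E^{-j}$. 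This is a split filtration. Since $\X$ is unbounded, however, $\B^{\vee}_{-m}$ need not be bounded, so we cannot simply cite boundedness below as in \S3.1.

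To repair this, refine each step. $\B^{\vee}_{-m}$ is concentrated in cohomological degrees $\geq 0$, because $\B$ is connective. So $\A \otimes \B^{\vee}_{-m}$ is bounded below, since $\A$ lives in degrees $\geq 0$ with $\A^0=\str$. Filter it further by the stupid truncations $\sigma_{\leq q}\B^{\vee}_{-m}$. These have bounded free pieces and the successive quotients have zero differential. Splicing the two filtrations gives a cellularly split filtration, indexed by pairs $(m,q)$ ordered lexicographically; the index set is countable and well-ordered of type $\omega^2$. Lemma \ref{lem.spa} is stated only for $\mathbb{Z}_{\geq 0}$-indexed filtrations, so we prove K-projectivity in two stages. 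Within a fixed $m$, the refinement is a $\mathbb{Z}_{\geq 0}$-indexed cellularly split filtration, so each $F_m/F_{m-1}$ is K-projective. Then $\K_{\A}=\operatorname{colim} F_m$ with termwise split inclusions and K-projective successive quotients, which gives K-projectivity by the standard argument that the proof of \cite[Theorem 9.10]{BMR} uses. Alternatively, since $\K_{\A}$ is bounded below (its total degree is $\geq 0$), the second half of Lemma \ref{lem.spa} applies directly to the split filtration $F_m$.

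\emph{Step 2: $\K_{\B}$.} Write
\begin{align*}
\K_{\B} = \bigoplus_{\bm{i}} \K_{\B}(\bm{i}),
\end{align*}
where $\bm{i}=(i_0,i_1,\dots)$ runs over finitely supported sequences. Exactly as in Proposition \ref{prop:proj1}, the tensor-product differential sends $\bm{i}$ to $\bm{i}+e_s-e_{s+1}$, and the Koszul differential (3.1) sends $\bm{i}$ to $\bm{i}-e_s$. We reuse the order from that proof: first by total weight $\sum_s i_s$, then by reverse lexicographic order. The obstacle is that $I_\ell$ is now infinite, since the index $s$ is unbounded, so there is no order-preserving bijection with $\mathbb{Z}_{\geq 0}$.

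To fix this, use a finer invariant that has finite level sets. The internal degree of the $\A^{\vee}$-factor is $-\sum i_s$, and its cohomological degree is $-\sum_s (s+1) i_s$. Set $w(\bm{i}) := \sum_s (s+1)i_s$. The set $\{\bm{i} : w(\bm{i}) = w\}$ is finite. The tensor-product differential changes $w$ by $-1$, and the Koszul differential changes $w$ by $-(s+1)$; in both cases $w$ strictly decreases. So the summands with $w \leq m$ form a dg-submodule $F_m$. Its successive quotient $F_m/F_{m-1}$ is a finite direct sum of pieces $\B \otimes (\text{free})$ in which the differentials between the pieces vanish modulo $F_{m-1}$. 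This gives a cellularly split filtration indexed by $\mathbb{Z}_{\geq 0}$, and Lemma \ref{lem.spa} yields K-projectivity.

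The main point to verify carefully is that the Koszul differential, now defined via the truncations $\X^{(N-|m|)}$, still has the form (3.1) on $\K_{\B}$. Then each summand maps to summands of strictly smaller $w$. This holds because $\A^{\vee}$ is bounded above, so the second construction ($\N \in C^-$) applies with a finite sum.
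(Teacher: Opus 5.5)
Your Step 1, for $\K_{\A}$, has a genuine gap. You were right to worry about the unboundedness of $\B^{\vee}_{-m}$, but neither fix works.

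The stupid truncations $\sigma_{\le q}\B^{\vee}_{-m}$ are quotient complexes, not subcomplexes, because the differential raises degree. The subcomplexes are the $\sigma_{\ge q}$. They form a decreasing filtration, which says nothing about K-projectivity. So the refined filtration you describe does not exist.

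Your fallback is ``$\K_{\A}$ is bounded below, so the second half of Lemma~\ref{lem.spa} applies''. This is exactly how the paper handles $\K_{\A}$: it says the argument of \S3.1 carries over. But that criterion is only valid when each split piece $K_m$ is itself K-projective over $\str$, for example bounded above in cohomological degree. In \S3.1 each $\B^{\vee}_{-m}$ is bounded, so this holds there. Here $\B^{\vee}_{-m}$ is unbounded above. With $A=\mathcal{O}_{X,p}$ and $M=\X^{\vee}$ from the example below, the criterion as literally stated fails.

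In fact the gap cannot be closed, because (1) is false in this generality. Take $R=k[\epsilon]/(\epsilon^2)$ and $\X=(\cdots\xrightarrow{\epsilon}R\xrightarrow{\epsilon}R\to 0)$, with $\E^{-n}=R$ for $n\ge 1$ and $\E^0=0$. Let $N$ be the acyclic complex with $R$ in every degree and differential $\epsilon$. View $N$ as a dg-$\A$-module through the augmentation $\A\to R$. Then every $x_\alpha^*$ acts by $0$, so the Koszul term drops out of the Hom complex. Since $\K_{\A}$ is graded-free over $\A$ on $\B^{\vee}$, this gives
\[
\operatorname{Hom}^{\bullet}_{\A}(\K_{\A},N)\cong\prod_{m\ge 0}\operatorname{Hom}^{\bullet}_{R}(\B^{\vee}_{-m},N).
\]
The factor $m=1$ is $\operatorname{Hom}^{\bullet}_R(\X^{\vee},N)$, where $\X^{\vee}$ has $R$ in each degree $\ge 1$ and differential $\pm\epsilon$. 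Consider the chain map $\X^{\vee}\to N$ given by $\pm 1$ in each degree. It is not null-homotopic: in degree $1$ a homotopy would force $1=\epsilon h^1(1)\pm\epsilon h^2(1)\in\epsilon R$. So $H^0\operatorname{Hom}^{\bullet}_{\A}(\K_{\A},N)\neq 0$ while $N$ is acyclic, and $\K_{\A}$ is not K-projective. Statement (1) needs an extra hypothesis making each $\B^{\vee}_{-m}$ K-projective over $\mathcal{O}_{X,p}$, for example $\X$ bounded as in \S3.1.

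Your Step 2, for $\K_{\B}$, is correct and takes a different, cleaner route than the paper.

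The paper's route has two stages:
\begin{enumerate}
\item It exhausts $\K_{\B}$ by the finite dg-submodules $\K_n=\bigoplus_{i_0+\cdots+i_n\le n}\K_{\B}(i_0,\dots,i_n)$, which bound both the support and the total weight.
\item It then inserts each finite difference $I_{n+1}\setminus I_n$ in order of total weight and reverse lexicographic order.
\end{enumerate}

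Your weight $w(\bm{i})=\sum_s(s+1)i_s$ is minus the cohomological degree of the $\A^{\vee}$-factor, so your filtration is just $F_m=\B\otimes_{\str}(\A^{\vee})^{\ge -m}$. Both the tensor-product part and the Koszul part of the differential strictly raise the $\A^{\vee}$-degree. The quotient $F_m/F_{m-1}\cong\B\otimes_{\str}(\A^{\vee})^{-m}$ has $(\A^{\vee})^{-m}$ finite free in a single degree. This gives a $\mathbb{Z}_{\ge 0}$-indexed cellularly split filtration in one step, with no induction or lexicographic bookkeeping. Both routes use only the unconditional (cellular) half of Lemma~\ref{lem.spa}, so part (2) stands.
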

\begin{proof}
For (2), the same proof as Proposition \ref{prop:proj1} (2) can be applied, so we will concentrate on (1). Note that the proof for (1) in Proposition \ref{prop:proj1} cannot be applied to this case since the index set $I_{\ell}$ appearing in the proof of Proposition \ref{prop:proj1} is infinite.

In this proof, we omit the symbol ``$p$'' for simplicity and as in the proof of Proposition \ref{prop:proj1}, we define
\begin{align*}
\K_{\B}(i_0,\cdots,i_n) := \B \otimes_{\str} S^{i_0}((\E^{0})^{\vee})^{\vee} \otimes_{\str} \cdots \otimes_{\str} S^{i_n}((\E^{-n})^{\vee})^{\vee},
\end{align*}
where $S^i((\E^j)^{\vee})$ is defined by
\begin{align*}
S^i((\E^j)^{\vee}) :=
\begin{cases}
\Sym^i ((\E^j)^{\vee})\ &\text{if $j$ is odd},\\
\wedge^i (\E^j)^{\vee}\ &\text{if $j$ is even}.
\end{cases}
\end{align*}
for $i \in \mathbb{Z}_{\geq 0}$ and $j \in \mathbb{Z}_{\leq 0}$. Then, $\K_{B}$ is decomposed to a direct sum
\begin{align*}
\K_{\B} = \bigoplus \K_{\B}(i_0,\cdots,i_n)
\end{align*}
as an $\str$-module, where the direct sum is taken over indices $(i_0, \cdots ,i_n,\cdots) \in \mathbb{Z}_{\geq 0}^{\oplus \mathbb{N}}$ with $i_n = 0$ for all but finitely many $n$. 

To prove the K-projectivity by using Lemma \ref{lem.spa}, we will construct a filtration $\{F_{\lambda}\}$ of $\K_{\B}$ inductively. First, for $n \in \mathbb{Z}_{\geq 0}$ we define a dg-submodule $\K_n$ of $\K_{\B}$ by
\begin{align*}
\K_n := \bigoplus_{i_0 + \cdots + i_n\leq n} \K_{\B}(i_0, \cdots , i_n).
\end{align*}
By the description of differentials in Proposition \ref{prop:proj1}, this is indeed a dg-submodule of $\K_{\B}$. We assume that we can pick a finite filtration of $\K_n$ $\{F_{\lambda}\}_{\lambda=-1}^{m}$ consisting of dg-submodules of $\K_{\B}$ which satisfies the following two conditions:
\begin{itemize}
\item $F_{-1}=0$ and $\bigcup_{\lambda=-1}^{m}F_{\lambda} = \K_n$,
\item Each subquotient $F_{\lambda}/F_{\lambda-1}$ is isomorphic to a tensor product of $\B$ and a free $\str$-module.
\end{itemize}
Then, we will construct a finite filtration of $\K_{n+1}$ which satisfies the same conditions as above.

Let $I_n :=\{(i_0,\cdots, i_n) | i_0+\cdots +i_n \leq n \}$ be the set of indices appearing in $\K_n$. We define a total order on $I_{n+1} \setminus I_n$ by the same way as in the proof Proposition \ref{prop:proj1}. More precisely, for $\bm{i}=(i_0,\cdots,i_{n+1}) \in I_{n+1} \setminus I_n$ and $\bm{j}=(j_0,\cdots,j_{n+1}) \in I_{n+1} \setminus I_n$,
\begin{align*}
\bm{i} \geq \bm{j} \overset{\text{def.}}{\Longleftrightarrow}
\begin{cases}
\sum_{s=0}^{n+1}i_s>\sum_{s=0}^{n+1}j_s,\ \text{or} \\ 
\sum_{s=0}^{n+1}i_s=\sum_{s=0}^{n+1}j_s \text{ and } \bm{i} \leq \bm{j} \ \text{in the lexicographic order}.
\end{cases}
\end{align*}
Then we write $I_{n+1} \setminus I_n =\{\bm{i}_1, \bm{i}_2, \cdots , \bm{i}_\ell \}$ so that $\bm{i}_s < \bm{i}_t$ if $s<t$. We set $F_{m+s} := F_m \oplus \bigoplus_{j=0}^s \B(\bm{i}_j)$ for $s=1,\cdots, \ell$. Then by the descripiton of differentials in Proposition \ref{prop:proj1}, each $F_{m+j}$ is a dg-submodule of $\K_{\B}$ and hence $\{F_\lambda \}_{\lambda=-1}^{m+\ell}$ gives a desired filtration of $\K_{n+1}$. Repeating this process, we can obtain a filtration of $\K_{\B}$ which satisfies the conditions in Lemma \ref{lem.spa}. Hence $\K_{\B}$ is a K-projective dg-module over $\B$.
\end{proof}

\subsection{Proof of the Equivalences}
In this section, we construct the functors between $\D(\A)$ and $\D(\B)$, and prove that this induces equivalences $\D(\A) \cong \operatorname{IndCoh}\B$ and $D^+(\A) \cong D^+(\B)$. From now on, we fix a connective complex (of possibly infinite length) of locally free sheaves of finite rank
\begin{align*}
\X = (\cdots \to \E^{-n} \to \cdots \to \E^{-1} \to \E^{0} \to 0),
\end{align*}
where sections of $\E^{-n}$ have cohomological degree $-n$, and we set $\B :=\Sym \X$. We define the Koszul dual $\A$ of $\B$ to be $\A := \Sym (\X^{\vee}[-1])$, and we give an internal grading on $\A$ and $\B$ by considering sections of $\E^i$ and $(\E^i)^{\vee}$ to have internal degree $1$, $-1$, respectively. 

Our goal in this section is to prove the following Koszul duality equivalences: 
\begin{thm}\label{thm:main}
There is an equivalence of $\infty$-categories
\begin{align*}
\D_{gr}(\A) \cong \operatorname{IndCoh}_{gr} \B,
\end{align*}
and if we restrict this equivalence to the bounded-below subcategories, we obtain an equivalence of triangulated categories
\begin{align*}
D_{gr}^+(\A) \cong D_{gr}^+ (\B).
\end{align*} 
If moreover $\E_0=0$, we also have the following equivelences in the ungraded context:
\begin{align*}
\D(\A) &\cong \operatorname{IndCoh} \B,\\
D^+(\A) &\cong D^+ (\B).
\end{align*}
\end{thm}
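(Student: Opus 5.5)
We construct the adjoint pair of functors from the introduction and show that unit and counit are quasi-isomorphisms, testing everything stalkwise through the resolutions $\K_{\A}$ and $\K_{\B}$ of \S3.1--3.2. Define
\begin{align*}
\Phi:=\K_{\A}\otimes_{\A}(-) : C(\A)\to C(\B),\qquad \Psi:=\Hom_{\B}(\K_{\B},-): C(\B)\to C(\A),
\end{align*}
with the graded versions obtained by using $\Hom_{\B,gr}$. We regard $\K_{\A}$ as an $(\B,\A)$-bimodule and $\K_{\B}$ as a $(\B,\A)$-bimodule via the $\otimes^K$ construction. The first step is an unwinding computation. Up to the sign conventions for $\otimes^K$, one checks
\begin{align*}
\K_{\A}\otimes_{\A}\M\cong \B^{\vee}\otimes^K\M,\qquad \Hom_{\B}(\B\otimes^K\A^{\vee},\N)\cong \A\otimes^K\N
\end{align*}
(the second uses $\Hom_{\str}(\A^{\vee},\str)\cong\A$ termwise, which holds because each bigraded piece of $\A$ is locally free of finite rank). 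These are the functors (1.1)--(1.2) of Mirkovi\'c--Riche, now made sense of for unbounded $\X$ by the truncated Koszul differentials of \S3.2. The boundedness hypotheses ($\M\in C^+$ or $\N\in C^-$) are exactly what make these Koszul differentials finite sums.

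Next, $\Phi$ and $\Psi$ preserve quasi-isomorphisms. By the proposition on K-projectivity, $\K_{\A,p}$ and $\K_{\B,p}$ are K-projective at every stalk, hence K-flat by Lemma \ref{lem:pf}. So $\Phi$ preserves acyclicity stalkwise. For $\Psi$ I would restrict to bounded below $\N$ and use the explicit description $\A\otimes^K\N$. The filtration of $\A$ by internal degree is finite in each cohomological degree once $\N$ is bounded below: this holds because $(\E^{-i})^\vee$ has cohomological degree $i+1\geq 1$ when $\E^0=0$, and in the graded setting it holds by the internal grading. A spectral-sequence argument then shows acyclicity is preserved and that $D^+$ is mapped to $D^+$. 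I expect this boundedness bookkeeping to be the main obstacle: it is precisely where the hypothesis $\E^0=0$ enters in the ungraded case. Without it, $\Sym((\E^0)^\vee)$ sits in degree $1$ but contributes infinitely in the relevant direction, and $\B^{\vee}\neq\Hom_{\str}(\B,\str)$. So I would first verify the convergence (a Mittag-Leffler/completeness check) degree by degree.

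Third, adjunction and the unit and counit. The tensor--Hom adjunction gives $\Phi\dashv\Psi'$ with $\Psi'=\Hom_{\B}(\K_{\A},-)$. The quasi-isomorphism $\str\to\K_{\B}$ together with $\K_{\A}\to\str$ relates $\Psi'$ to $\Psi$; alternatively, we can write down the unit and counit directly:
\begin{align*}
\M\to \A\otimes^K\B^{\vee}\otimes^K\M,\qquad \B\otimes^K\A^{\vee}\otimes^K\N\to\N.
\end{align*}
Both are $\K_{\A}\otimes^K(-)$ and $\K_{\B}\otimes^K(-)$ composed with the augmentations. Filtering by the cohomological (or internal) degree of $\M$, resp. $\N$, reduces the claim to the statement that $\K_{\A}\to\str$ and $\str\to\K_{\B}$ are quasi-isomorphisms. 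This is the proposition of \S3.2, deduced from Proposition \ref{qis1} by truncation $\X^{(n)}$, and it gives $D^+_{gr}(\A)\cong D^+_{gr}(\B)$, and $D^+(\A)\cong D^+(\B)$ when $\E^0=0$.

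Finally, for the $\infty$-categorical statement, $\Phi$ sends $\A$-modules to objects that are compact-generated by images of coherent objects. The functor is continuous since it is a tensor functor, and $\Psi$ sends $\operatorname{Coh}\B$ into compact objects of $\D(\A)$ (with $\str\mapsto\A$-type generators, e.g.\ $\Psi(\str)\simeq\A^{\vee}$-dual). So the $D^+$ equivalence restricts to $\operatorname{Coh}\B\simeq\D(\A)^{c}$. Ind-completing then yields $\D_{gr}(\A)\cong\operatorname{IndCoh}_{gr}\B$, and its ungraded analogue when $\E^0=0$. Compatibility with the $D^+$ equivalence follows because $\operatorname{IndCoh}^+\simeq\D^+$ for the Noetherian connective algebra $\B$.
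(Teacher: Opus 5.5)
The passage to $\operatorname{IndCoh}$ is a genuine gap. Affine-locally $\D(\A)^c=\operatorname{thick}\langle\A\rangle$, and $\Phi(\A)=\K_{\A}\simeq\str$ as $\B$-modules (via $1\mapsto 1\otimes 1^*$). So, given your $D^+$ equivalence, the claim that $\Psi$ sends $\operatorname{Coh}\B$ into compact objects amounts to saying that every coherent $\B$-module lies in $\operatorname{thick}\langle\str\rangle\subset D(\B)$. Postnikov towers (with $H^0\B=\str$ when $\E^0=0$) give this when coherent $\str$-modules are perfect, e.g.\ for $X$ regular. Your sketch (``$\str\mapsto\A$-type generators'') does not prove it, and it is false in general. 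For $\X=0$ you get $\A=\B=\str$ and $\Psi=\operatorname{id}$, and $k(p)$ at a closed singular point $p$ is coherent but not compact in $\D(\str)$. The paper's proof makes the same leap: it establishes only $\operatorname{REnd}_{\B}(\K_{\A})\simeq\A$, i.e.\ full faithfulness of $\operatorname{Perf}\A\to\operatorname{Coh}\B$, and then asserts an equivalence. So this step cannot be repaired by appeal to the paper; what survives is full faithfulness of $\D(\A)\to\operatorname{IndCoh}\B$.

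For $D^+$ your route differs from the paper's. The paper gets full faithfulness on all of $\D(\A)$ from that Morita computation and then needs only the map $\N\to\Phi\Psi\N$ (Lemma \ref{lem:sur}). Your substitute, the map $\Psi\Phi\M\to\M$, does work for bounded-below complexes. Filter $\A\otimes^K\B^{\vee}\otimes^K\M$ by the cohomological degree of $\M$; the Koszul term multiplies $\M$ by $x_\alpha^*$ of degree $\ge 1$, so this is a filtration by subcomplexes. It is finite in each degree, and the associated graded map is the augmentation $\K_{\A}\otimes_{\str}\M^q\to\M^q$. However, $\A$ is coconnective, so you cannot truncate. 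You therefore still owe an argument that objects of $D^+(\A)$ are represented by bounded-below complexes; the Morita step avoids this.

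On the $\B$-side your description is wrong in three ways:
\begin{enumerate}
\item $\Phi\Psi\N\cong\K_{\A}\otimes^K\N=\B^{\vee}\otimes^K\A\otimes^K\N$, not $\B\otimes^K\A^{\vee}\otimes^K\N$.
\item The natural map goes $\N\to\Phi\Psi\N$, since on $C^+$ one has $\Psi\dashv\Phi$ (Lemma \ref{lem:adj}).
\item The cohomological degree of $\N$ does not give subcomplexes: the Koszul term multiplies $\N$ by $x_\alpha$ of degree $\le 0$, while $d_{\N}$ raises degree. Ungraded $\N$ has no internal degree at all.
\end{enumerate}
You must instead filter by the internal degree of the $\K_{\A}$ factor. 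Convergence then uses that $\K_{\A,-j}$ lives in cohomological degrees $\ge j$, which requires $\E^0=0$.

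Without $\E^0=0$ the step fails even in the graded setting. Take $X=\operatorname{Spec}k$ and $\X=k$ in degree $0$, so $\B=k[x]$ and $\A=k[\epsilon]/(\epsilon^2)$. Every $\Phi(\M)=\B^{\vee}\otimes^K\M$ is termwise $x$-power torsion, so $k[x]\in D^+_{gr}(\B)$ is not in the essential image of $\Phi$. Hence your graded claim for arbitrary $\E^0$, which the paper also asserts, does not follow. Your explanation of why $\E^0=0$ is needed is also partly off. $(\E^0)^{\vee}$ sits in odd degree $1$ and contributes only an exterior algebra; the actual obstruction is $\B^0=\Sym\E^0$.
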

First we explain the reason for the latter condition $\E_0=0$. If $\E^0 \neq 0$, $\B^p$ is not coherent over $\str$ since $\B^0=\Sym \E^0$, for example. Due to this, we cannot obtain the isomorphism in Lemma \ref{lem:fun}, and the proof does not work.

However, this problem can be ignored when the additional grading on $\X$ is considered. In this case, all $\B^p_{q}$ are coherent over $\str$, and hence all calculations can be done by simply replacing $\Hom$'s by graded one, so we will give a proof only when $\A$ and $\B$ are ungraded.

To prove our equivalences, first we define two functors $F$ and $G$ between $C(\A)$ and $C(\B)$ as follows:
\begin{align*}
F:C(\A) \to C(\B), \ &\M \mapsto \K_{\A} \otimes_{\A} \M ,\\
G:C(\B) \to C(\A), \ &\M \mapsto \Hom_{\B}(\K_{\B} , \M).
\end{align*}
We will show that $F$ induces equivalence in Theorem \ref{thm:main}. To see  this, first we will show that these functors give adjouint functors between $C^+(\A)$ and $C^+(\B)$.

\begin{lem} \label{lem:F}
For any $\M \in C^{+}(\A)$, $F(\M) \in C^{+}(\B)$. Consequently, the restriction of $F$ gives a functor $C^+(\A) \to C^+(\B)$.
\end{lem}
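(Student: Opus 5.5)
The plan is to compute $F(\M)$ explicitly as a graded $\str$-module and then read off a lower bound on its cohomological degrees from the degrees of $\B^{\vee}$ and of $\M$. Since $\K_{\A} = \A \otimes^K \B^{\vee}$ is, as a graded $\str$-module, $\A \otimes_{\str} \B^{\vee}$, we have an isomorphism of graded $\str$-modules
\begin{align*}
F(\M) = \K_{\A} \otimes_{\A} \M \cong \B^{\vee} \otimes_{\str} \M ,
\end{align*}
using $\A \otimes_{\A} \M \cong \M$. Under this identification the differential is the tensor product differential plus a Koszul-type term. The $\B$-module structure comes from the action of $\B$ on $\B^{\vee}$, so $F(\M)$ lies in $C(\B)$. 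To make the definition of the Koszul differential legitimate in the infinite-length case, I will use the $C^{-}$ or $C^+$ variant from \S 3.2 as appropriate; this does not affect the underlying graded module.

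The key degree observation is as follows. Since $\X$ is connective, $\B = \Sym \X$ is concentrated in non-positive cohomological degrees, so $\B^{\vee}$, whose degree-$i$ part is $\Hom_{\str}(\B^{-i},\str)$ (graded dual), is concentrated in non-negative degrees. Suppose $\M^j = 0$ for $j < N$. Then
\begin{align*}
(\B^{\vee} \otimes_{\str} \M)^k = \bigoplus_{i+j=k} (\B^{\vee})^i \otimes_{\str} \M^j
\end{align*}
vanishes whenever $k < N$, because every summand has $i \ge 0$ and $j \ge N$. Hence $F(\M)^k = 0$ for $k < N$, so $F(\M) \in C^+(\B)$. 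The same bound shows that each $F(\M)^k$ is a finite direct sum over $i = 0, \dots, k-N$, so the degreewise structure is under control. Functoriality of $F$ is immediate from its definition as a tensor product, so its restriction gives a functor $C^+(\A) \to C^+(\B)$.

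The step that needs care is justifying the identification $\K_{\A} \otimes_{\A} \M \cong \B^{\vee} \otimes^K \M$ as dg-$\B$-modules. One has to check that the Koszul differential on $\K_{\A}$, which uses $\A$-multiplication by $x_\alpha^*$ on the $\A$-factor, transfers under $\otimes_{\A}\M$ to the action of $x_\alpha^*$ on $\M$, with the signs of \S 3.1. One also has to check that the relevant sums are locally finite. That finiteness is exactly what the boundedness below of $\M$ (together with the truncations $\X^{(n)}$) guarantees. I expect this bookkeeping to be routine, since only the underlying graded module matters for the degree bound.
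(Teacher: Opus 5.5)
Your proof is correct and follows the paper's argument. The paper also identifies $F(\M)=\K_{\A}\otimes_{\A}\M\cong\B^{\vee}\otimes^K\M$ and concludes from $\B^{\vee}\in C^+(\B)$ together with $\M$ being bounded below. You make the degree count explicit, and you rightly note that only the underlying graded module matters for the bound, so the dg identification is not needed here.
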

\begin{proof}
First, for any $\M \in C^+({\A})$, we have isomorphisms
\begin{align*}
F(\M) &= \K_{\A} \otimes_{A} \M \\
&= (\A \otimes^K \B^{\vee} ) \otimes_{\A} \M \\
&\cong \B^{\vee} \otimes^K \M.
\end{align*}
Since $\B^{\vee} \in C^+(\B)$, this shows that $F(\M) \in C^{+}(\B)$. Hence the restriction of $F$ defines a functor $C^{+}(\A) \to C^{+}(\B)$.
\end{proof}

The functor $G$ can be described as follows.

\begin{lem} \label{lem:fun}
For $\M \in C^{+}(\B)$, there is a functorial isomorphism
\begin{align*}
G(\M) \cong \A \otimes^K \M.
\end{align*}
In particular, $G(\M) \in C^+(\A)$ and hence the restriction of $G$ gives a functor $C^+(\B) \to C^+(\A)$.
\end{lem}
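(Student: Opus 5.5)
We construct the isomorphism by identifying $\K_{\B} = \B \otimes^K \A^{\vee}$ as a graded $\B$-module with the free module $\B \otimes_{\str} \A^{\vee}$. A degree $n$ $\B$-linear map $\K_{\B} \to \M$ is then determined by its restriction to $1 \otimes \A^{\vee}$. This gives an isomorphism of graded $\str$-modules
\begin{align*}
\Hom_{\B}(\K_{\B},\M) \cong \Hom_{\str}(\A^{\vee},\M),
\end{align*}
compatible with the $\A$-action: $\A$ acts on $\K_{\B}$ through its action on $\A^{\vee}$, hence by precomposition on the right-hand side.

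The second step compares $\Hom_{\str}(\A^{\vee},\M)$ with $\A \otimes_{\str} \M$. Each $\A^i$ is locally free of finite rank, because $\E^0 = 0$ in the ungraded case. In the graded case each bigraded piece is finite, and we work with graded Hom. There is then a natural map
\begin{align*}
\A \otimes_{\str} \M \to \Hom_{\str}(\A^{\vee},\M), \quad a \otimes m \mapsto (\varphi \mapsto \pm \varphi(a) m).
\end{align*}
To see that it is an isomorphism, fix a cohomological degree $n$ and compare
\begin{align*}
\prod_i \Hom_{\str}((\A^{\vee})^{i}, \M^{i+n}) = \prod_i \A^{-i} \otimes \M^{i+n}
\qquad\text{with}\qquad
\bigoplus_i \A^{-i}\otimes \M^{i+n}.
\end{align*}
Since $\A$ is concentrated in degrees $\geq 0$ and $\M^{j}=0$ for $j \ll 0$, only finitely many $i$ contribute, so the product equals the sum. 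This is exactly where the hypothesis $\M \in C^+(\B)$ and the finiteness (coherence) of each $\A^i$ enter. The identification is $\A$-linear for the left multiplication on $\A \otimes \M$, and functorial in $\M$.

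The third step, which I expect to be the main obstacle, is checking that the transported differential agrees with that of $\A \otimes^K \M$. The differential $d_{\Hom}(f) = d_{\M}\circ f - (-1)^n f \circ d_{\K_{\B}}$ has three pieces: $d_{\M}$, $d_{\A^{\vee}}$ (dual to $d_{\A}$), and the Koszul part of $d_{\K_{\B}}$. On the tensor side, the first two become $d_{\A\otimes\M}$. The Koszul part $b\otimes\varphi \mapsto \pm\sum_\alpha b x_\alpha \otimes x_\alpha^*\varphi$ transports, for $f\in\Hom$, to $\varphi \mapsto \pm\sum_\alpha x_\alpha f(x_\alpha^*\varphi)$. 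Under the identification, $x_\alpha^*\varphi$ corresponds to contracting $\varphi$ against multiplication by $x_\alpha^*$ in $\A$, so $f = a \otimes m$ is sent to $\pm\sum_\alpha a x_\alpha^*\otimes x_\alpha m$. This is precisely the Koszul differential of $\A\otimes^K\M$, given stalkwise by the basis formula of \S3.2; the sum is finite since $\M\in C^+(\B)$. To carry this out I would work at a stalk with the dual bases $\{x_\alpha\},\{x_\alpha^*\}$ and track the signs $(-1)^{|m|}$ and $-(-1)^{|n|}$ fixed in the definitions of $\otimes^K$, using $|x_\alpha|+|x_\alpha^*|$ odd as in Lemma \ref{lem:diff}.

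Finally, $\A\otimes^K\M$ is bounded below, since $\A$ is in nonnegative degrees and $\M$ is bounded below. Hence $G$ restricts to a functor $C^+(\B)\to C^+(\A)$.
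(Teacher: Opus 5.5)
Your proposal is correct and follows the paper's proof step for step. You identify $G(\M)$ with $\Hom_{\str}(\A^{\vee},\M)$ using freeness of $\K_{\B}$ over $\B$, replace the product by $\A\otimes_{\str}\M$ because $\A$ lives in degrees $\geq 0$ and $\M$ is bounded below, and check stalkwise with dual bases that the Koszul part of $d_{\K_{\B}}$ transports to the Koszul differential of $\A\otimes^K\M$. The paper carries out the sign bookkeeping you defer, with the map given on $1\otimes\A^{\vee}$ by $a\otimes m\mapsto\bigl(f\mapsto(-1)^{|a|+|a||m|}f(a)m\bigr)$; the extra $(-1)^{|a|}$, which comes from $\A\cong\A^{\vee\vee}$, is needed already for compatibility with the untwisted differentials.

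One small correction: each $\A^i$ is locally free of finite rank whether or not $\E^0=0$, since all generators $(\E^{-j})^{\vee}$ sit in degree $j+1\geq 1$. So that hypothesis is not what makes your second step work.
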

\begin{proof}
First, as graded $\str$-modules (that is, forgetting differentials), there are isomorphisms
\begin{align*}
G(\M) = \Hom_{\B}(\K_{\B},\M) = \Hom_{\B}(\B \otimes_{\str} \A^\vee,\M) \cong \Hom_{\str} (\A^\vee,\M) 
\end{align*}
Now, let us prove that the natural morphism $\A \otimes_{\str} \M \to \Hom_{\str} (\A^\vee,\M)$ gives an isomorphism. The $n$-th degree of $\Hom_{\str} (\A^\vee,\M)$ is
\begin{align*}
\Hom_{\str}^n (\A^\vee,\M) &= \prod_{q-p=n} \Hom_{\str} ((\A^{\vee})^p,\M^q) = \bigoplus_{q-p=n} \Hom_{\str} ((\A^{-p})^{\vee},\M^q).%\\
\end{align*}
Here, since $\A^{-p} =0$ for $p \ll 0$ and $\M^q =0$ for $q \ll 0$, the direct product $\prod_{q-p=n} \Hom_{\str} ((\A^{\vee})^p,\M^q)$ is finite, and hence it is the same thing as a direct sum. Therefore the $n$-th degree of $\Hom_{\str} (\A^\vee,\M)$ coincides with that of $\A \otimes_{\str} \M$, and the natural morphism induces isomorophisms at each degree. Thus we obtain the isomorphism of graded $\str$-modules $\Hom_{\B}(\K_{\B},\M) \cong \A \otimes^K \M$. Moreover, this gives an isomorphism as dg-$\A$-modules. It is obvious that this isomorphism preserves $\A$-action, so we will show that $\varphi : \A \otimes^K \M \to \Hom_{\B}(\K_{\B}, \M)$ commutes with the differentials. To see this, for a section $a \otimes m$ of $\A \otimes^K \M$, we should check that
\begin{align*}
\varphi ((d_{\A \otimes_{\str} \M} + d')(a \otimes m)) = d_{\Hom_{\B}(\K_{\B},\M)}(\varphi (a \otimes m)),
\end{align*}
where $d'$ is the Koszul differential. Now, the right hand side is
\begin{align*}
&d_{\Hom_{\B}(\K_{\B},\M)}(\varphi (a \otimes m)) \\
&= d_{\M} \circ \varphi (a \otimes m) -(-1)^{|\varphi (a \otimes m)|}\varphi (a \otimes m) \circ d_{\K_{\B}}\\
&= d_{\M} \circ \varphi (a \otimes m) -(-1)^{|a|+|m|} \varphi (a \otimes m) \circ (d_{\B \otimes_{\str} \A^{\vee}}+d''),
\end{align*}
where $d''$ is the Koszul differential of $\K_{\B}$. Since we have
\begin{align*}
\varphi(d_{\A \otimes_{\str} \M}(a\otimes m)) = d_{\M} \circ \varphi (a \otimes m) -(-1)^{|a|+|m|} \varphi (a \otimes m) \circ d_{\B \otimes_{\str} \A^{\vee}},
\end{align*}
it is enough to show that $\varphi(d'(a\otimes m)) = -(-1)^{|a|+|m|}\varphi (a \otimes m) \circ d''$. Then we can check this by direct calculations, using the fact that $\varphi$ is given by
$\varphi(a\otimes m) (b \otimes f) =(-1)^{|a|+|a||m|+|b||m|+|b||f|}f(1)bm$ for a local section $b \otimes f$ of $\K_{\B}$ and the explicit formula of Koszul differentials (3.2). In fact, let $\{x_{\alpha}\}$ be a basis of $\X_p$ for a point $p\in X$, then we have
	\begin{align*}
	&\varphi(d'(a\otimes m))(b \otimes f) \\
	&= \varphi \left( -(-1)^{|a|}\sum_{\alpha} ax_{\alpha}^* \otimes x_{\alpha}m \right) (b\otimes f)\\
	&=-(-1)^{|a|}\sum_{\alpha} (-1)^{|ax_{\alpha}^*|+|ax_{\alpha}^*||x_{\alpha}m|+|b||x_{\alpha}m|+|b||f|} b f(ax_{\alpha}^*) x_\alpha m,
	\end{align*}
	and
	\begin{align*}
	&-(-1)^{|a|+|m|} \varphi(a \otimes m) \circ d''(b \otimes f) \\
	&=-(-1)^{|a|+|m|} \varphi(a\otimes m) \left( (-1)^{|b|}\sum_{\alpha} bx_{\alpha} \otimes x_{\alpha}^*f \right) \\
	&=-(-1)^{|a|+|m|}(-1)^{|b|} \sum_{\alpha}  (-1)^{|a|+|a||m|+|bx_{\alpha}||m|+|bx_{\alpha}||x_{\alpha}^*f|} bx_{\alpha} x_{\alpha}^*f(a)m\\
	&=-(-1)^{|a|+|m|+|b|} \sum_{\alpha}  (-1)^{|a|+|a||m|+|bx_{\alpha}||m|+|bx_{\alpha}||x_{\alpha}^*f|+|x_{\alpha}^*||f|}bx_{\alpha} f(x_{\alpha}^*a)m\\
	&=-(-1)^{|a|+|m|+|b|} \sum_{\alpha}  (-1)^{|a|+|a||m|+|bx_{\alpha}||m|+|bx_{\alpha}||x_{\alpha}^*f|+|x_{\alpha}^*||f|+|x_{\alpha}^*||a|}bx_{\alpha} f(ax_{\alpha}^*)m.
	\end{align*}
	Hence it is enough to compare the signs of each term. Here, note that $f(ax_{\alpha}^*)$ is zero unless $|f|=-|ax_{\alpha}^*|$, so we may assume that $|f|=-|ax_{\alpha}^*|$. Let us calculate the exponents of $-1$ in both hands. The following caluculation is always done in modulo $2$ and notice that $|x_{\alpha}|$ and $|x_{\alpha}^*|$ always have different parity; we frequently use equations such as $|x_{\alpha}||x_{\alpha}^*| \equiv 0$, $|x_{\alpha}|+1 \equiv |x_{\alpha}^*|$, and $|x_{\alpha}|+|x_{\alpha}^*| \equiv 0$. First, the exponent in the left hand side is,
	\begin{align*}
	&1+|a| + |ax_{\alpha}^*|+|ax_{\alpha}^*||x_{\alpha}m| +|b||x_{\alpha}m| +|b||f|\\
	&\equiv 1+ |a| + |a| + |x_{\alpha}^*| + (|a|+|x_{\alpha}^*|)(|x_{\alpha}|+|m|) + |b||x_{\alpha}m| + |b||f| \\
	&\equiv 1+|x_{\alpha}^*| + |a||x_{\alpha}| + |a||m| + |x_{\alpha}^*||m| +|b||x_{\alpha}|+|b||m|+ |b||f|.
	\end{align*}
	One in the right hand side is
	\begin{align*}
	&1+|a| + |m| + |b| +|a|+|a||m|+|bx_{\alpha}||m| +|bx_{\alpha}||x_{\alpha}^*f|+|x_{\alpha}^*||f| +|x_{\alpha}^*||a|\\
	&\equiv 1+|m|+|b|+|a||m| +|b||m|+|x_{\alpha}||m|  \\
	&\ \ \ \ \ \  \  \ \ \ \ \ \  \ \ \ \ \ \ \ \ \ \ \ +(|b|+|x_{\alpha}|)(|x_{\alpha}^*|+|f|)+ |x_{\alpha}^*||f| +|x_{\alpha}^*||a|\\
	&\equiv 1+|m|+|b|+|a||m| +|b||m|+|x_{\alpha}||m| \\
	&\ \ \ \ \ \  \  \ \ \ \ \ \  \ \ \ \ \ \ \ \ \ \ \ +|b||x_{\alpha}^*|+|x_{\alpha}||f|+|b||f|+ |x_{\alpha}^*||f| +|x_{\alpha}^*||a|\\
	&\equiv 1+(1+|x_{\alpha}|)|m|+(1+|x_{\alpha}^*|)|b|+|a||m| \\
	&\ \ \ \ \ \  \  \ \ \ \ \ \  \ \ \ \ \ \ \ \ \ \ \ +|b||m|+|b||f|+(|x_{\alpha}|+|x_{\alpha}^*|)|f| + |x_{\alpha}^*||a|\\
	&\equiv 1+|x_{\alpha}^*||m|+|b||x_{\alpha}|+|a||m|+|b||m|+|b||f|+|f|+|x_{\alpha}^*||a|\\
	&\equiv 1+|x_{\alpha}^*||m|+|b||x_{\alpha}|+|a||m|+|b||m|+|b||f|+|ax_{\alpha}^*|+|x_{\alpha}^*||a|\\
	&\equiv 1+|x_{\alpha}^*||m|+|b||x_{\alpha}|+|a||m|+|b||m|+|b||f|+|a|+|x_{\alpha}^*|+|x_{\alpha}^*||a|\\
	&\equiv 1+|x_{\alpha}^*||m|+|b||x_{\alpha}|+|a||m|+|b||m|+|b||f|+(1+|x_{\alpha}^*|)|a|\\
	&\equiv 1+|x_{\alpha}^*|+|x_{\alpha}^*||m|+|b||x_{\alpha}|+|a||m|+|b||m|+|b||f|+|a||x_{\alpha}|.
	%&=1+|m|+|b|+|a||m| + |b||x_{\alpha}^*| + |b||f| + |x_{\alpha}||f| + |x_{\alpha}^*||f| + |x_{\alpha}^*||a|
	\end{align*}
Hence we have $\varphi(d'(a\otimes m)) = -(-1)^{|a|+|m|}\varphi (a \otimes m) \circ d''$, and the claim follows.
\end{proof}

\begin{lem}\label{lem:adj}
The functor $G:C^+(\B) \to C^+(\A)$ is left adjoint to $F:C^+(\A) \to C^+(\B)$. Namely, for any $\M \in C^+(\A)$ and $\N \in C^+(\B)$, there is a functorial isomorphism
\begin{align*}
\operatorname{Hom}_{\A} (G(\M) , \N) \cong \operatorname{Hom}_{\B} (\M , F(\N)).
\end{align*}
\end{lem}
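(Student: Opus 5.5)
The plan is to identify both sides with one and the same set of $\str$-linear maps $f:\M\to\N$, where $\M\in C^+(\B)$ and $\N\in C^+(\A)$, and then to check that the two dg-compatibility conditions cut out the same subset. For concreteness I work with $G(\M)\cong\A\otimes^K\M$ from Lemma \ref{lem:fun} and with $F(\N)\cong\B^{\vee}\otimes^K\N$ from the proof of Lemma \ref{lem:F}. As in Lemma \ref{lem:fun}, I assume either $\E^0=0$ or work throughout with graded Homs; otherwise the identification in the second step fails.

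\textbf{The left-hand side.} Forgetting differentials, $\A\otimes^K\M=\A\otimes_{\str}\M$ is induced from $\str$. Hence a degree-$0$ map of graded $\A$-modules $\A\otimes_{\str}\M\to\N$ is the same thing as a degree-$0$ $\str$-linear map $f:\M\to\N$, via $a\otimes m\mapsto af(m)$. Such an $\A$-linear map commutes with the total differential $d_{\A\otimes\M}+d'$ iff
\begin{align*}
d_{\N}f(m)-f(d_{\M}m)=\pm\sum_{\alpha}x_{\alpha}^{*}f(x_{\alpha}m)
\end{align*}
for all local sections $m$, checked on stalks with a basis $\{x_\alpha\}$ of $\X_p$. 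The sum is finite because $\M$ is bounded below. This reduction uses only $\A$-linearity and the Leibniz rule, so it suffices to test the condition on elements $1\otimes m$.

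\textbf{The right-hand side.} Forgetting differentials, I want $\B^{\vee}\otimes_{\str}\N\cong\Hom_{\str}(\B,\N)$ as graded $\B$-modules, so that $\B^{\vee}\otimes\N$ is coinduced. The argument is the same degreewise finiteness as in Lemma \ref{lem:fun}. In each fixed degree only finitely many pairs $(\B^{-p},\N^q)$ contribute, since $\N$ is bounded below and $\B$ is connective, so products equal sums. Moreover each $\B^{-p}$ is locally free of finite rank when $\E^0=0$, or when one passes to graded pieces, so $(\B^{-p})^{\vee}\otimes\N^q\cong\Hom(\B^{-p},\N^q)$. Then $\operatorname{Hom}^{\mathrm{gr}}_{\B}(\M,\Hom_{\str}(\B,\N))\cong\operatorname{Hom}^{\mathrm{gr}}_{\str}(\M,\N)$ via $g\mapsto(m\mapsto g(m)(1))$. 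I then translate compatibility with $d_{\B^{\vee}\otimes\N}+d''$ into a condition on $f$, again tested on the generator $1\in\B$ using $\B$-linearity. This should give exactly the displayed equation, with the Koszul term now coming from $d''(\varphi\otimes n)=\pm\sum_\alpha \varphi x_\alpha\otimes x_\alpha^* n$.

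\textbf{Finishing, and the main obstacle.} The hard step is the sign bookkeeping that shows the two Koszul terms agree. I would handle it exactly as in the proof of Lemma \ref{lem:fun}: work on stalks, use that $|x_\alpha|$ and $|x_\alpha^*|$ always have opposite parity, and compare exponents of $-1$ modulo $2$. A secondary point is to note that all sums are finite by the boundedness hypotheses. Once the two conditions coincide, the bijection $\operatorname{Hom}_{\A}(G(\M),\N)\cong\operatorname{Hom}_{\B}(\M,F(\N))$ follows. Functoriality in $\M$ and $\N$ is clear, since both identifications are given by composition with the evident natural maps.
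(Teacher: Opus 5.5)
Your proposal is correct and follows the paper's proof: the paper likewise identifies both sides with $\operatorname{Hom}_{\operatorname{Gr-}\str}(\M,\N)$ via $\hat f(a\otimes m)=af(m)$ and $\tilde f(m)(b)=f(bm)$, using boundedness to get $\B^{\vee}\otimes_{\str}\N\cong\Hom_{\str}(\B,\N)$. It then checks on stalks that both dg-conditions, tested on $1\otimes m$ and by evaluating at $1\in\B$, reduce to $f(d_{\M}m)=d_{\N}f(m)+\sum_\alpha x_\alpha^*f(x_\alpha m)$. The sign bookkeeping is also much lighter than you fear: at $1\otimes m$ and $b=1$ almost all degree-dependent signs drop out, so you do not need a parity computation like the one in Lemma \ref{lem:fun}.
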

\begin{proof}
First, forgetting differentials, we have the isomorphisms
\begin{align*}
\operatorname{Hom}_{\operatorname{Gr-}\A} (G(\M) , \N) &= \operatorname{Hom}_{\operatorname{Gr-}\A}(\A \otimes_{\str} \M, \N) \\
& \cong \operatorname{Hom}_{\operatorname{Gr-}\str} (\M,\N), \\
\operatorname{Hom}_{\operatorname{Gr-}\B} (\M , F(\N)) &= \operatorname{Hom}_{\operatorname{Gr-}\B}(\M ,\B^{\vee}\otimes_{\str} \N) \\
&\cong \operatorname{Hom}_{\operatorname{Gr-}\B}(\M ,\Hom_{\str}(\B,\N))\\
&\cong \operatorname{Hom}_{\operatorname{Gr-}\str}(\M,\N).
\end{align*}
Here, note that we need the fact that $\B \in C^-(\B)$ and $\N \in C^+(\A)$ to deduce that $\B^{\vee}\otimes_{\str} \N \cong \Hom_{\str}(\B,\N).$ By the calculation above, both $\operatorname{Hom}_{\A} (G(\M) , \N)$ and $\operatorname{Hom}_{\B} (\M , F(\N))$ can be considered as subsets of $\operatorname{Hom}_{\operatorname{Gr-}\str}(\M,\N)$. Then what we need to prove is these subsets are indeed the same.

Let $f \in \operatorname{Hom}_{\operatorname{Gr-}\str}(M,N)$, and let $\tilde{f} \in \operatorname{Hom}_{\operatorname{Gr-}\B} (\M , F(\N))$ and $\hat{f} \in \operatorname{Hom}_{\operatorname{Gr-}\A} (G(\M) , \N)$ be corresponding morphisms. Then we need to show that 
\begin{align*}
\tilde{f} \in \operatorname{Hom}_{\B} (\M , F(\N)) \Longleftrightarrow \hat{f} \in \operatorname{Hom}_{\A} (G(\M) , \N).
\end{align*}
More explicitly, this condition means $\tilde{f}$ satisfies 
\begin{align*}
d_{F(\N)} \circ \tilde{f} = \tilde{f} \circ d_{\M}
\end{align*} 
if and only if $\hat{f}$ satisfies 
\begin{align*}
d_{\N} \circ \hat{f} = \hat{f} \circ d_{G(\M)}.
\end{align*}
However, this condition can be checked locally, so we may consider each stalk. Let $p \in X$ and $\{x_{\alpha}\}$ be a basis of $\X_p$. First consider $F(\N)$. There is an isomorophism as a graded $\str$-module
\begin{align*}
F(\N) = \B^{\vee} \otimes_{\str} \N \cong \Hom_{\str}(\B,\N).
\end{align*}
Then, a differential on $\Hom_{\str}(\B,\N)$ induced by $\B^{\vee} \otimes_{\str} \N$ is $d_{\Hom_{\str}(\B,\N)} +d'$, where $d'$ is defined by
\begin{align*}
d'(\varphi)(b) = (-1)^{|b|} \sum_{\alpha} x_{\alpha}^* \varphi(x_{\alpha}b),
\end{align*}
for $\varphi \in \Hom_{\str}(\B,\N).$ Hence, through the isomorphism above, what we need to show is $\tilde{f}$ satisfies 
\begin{align}
(d_{\Hom_{\str}(\B,\N)}+d') \circ \tilde{f} = \tilde{f} \circ d_{\M},
\end{align} 
if and only if $\hat{f}$ satisfies 
\begin{align}
d_{\N} \circ \hat{f} = \hat{f} \circ d_{G(\M)}.
\end{align}

Now, $\tilde{f}$ is given by $\tilde{f}(m)(b)=f(bm)$. Since $\tilde{f}$ is a homomorphism of graded $\B$-modules, (3.3) is equivalent to $((d_{\Hom_{\str}(\B,\N)}+d') \circ \tilde{f}(m))(1) = (\tilde{f} \circ d_{\M}(m))(1)$ for all $m \in \M$. Then we have
\begin{align*}
(\tilde{f}\circ d_{\M}(m))(1) = f(d_{\M}(m)),
\end{align*}
and
\begin{align*}
((d_{\Hom_{\str}(\B,\N)}+d') \circ \tilde{f}(m))(1) = d_{\N} (f(m)) +\sum_{\alpha}x_{\alpha}^*f(x_{\alpha}m).
\end{align*}
Hence (3.3) is equivalent to
\begin{align}
f(d_{\M}(m))=d_{\N} (f(m)) +\sum_{\alpha}x_{\alpha}^*f(x_{\alpha}m).
\end{align}
Similarly, $\hat{f}$ is given by $\hat{f}(a \otimes n) = af(n)$, and hence we have
\begin{align*}
d_{\N} \circ \hat{f} (1 \otimes m) &=d_{\N}(f(m))
\end{align*}
and
\begin{align*}
\hat{f} \circ d_{G(\M)} (1 \otimes m ) &= \hat{f}((d_{\A \otimes \M}+d'')(a\otimes m))\\
&=\hat{f} \left(1 \otimes d_{\M}(m) - \sum_{\alpha}x_{\alpha}^* \otimes x_{\alpha}m \right)\\
&=f(d_{\M}(m)) - \sum_{\alpha} x_{\alpha}^*f(x_{\alpha}m),
\end{align*}
where $d''$ denotes the Koszul differential of $G(\M) =\A \otimes^K \M$.Hence (3.4) is equivalent to
\begin{align*}
d_{\N}(f(m))=f(d_{\M}(m)) - \sum_{\alpha} x_{\alpha}^*f(x_{\alpha}m),
\end{align*}
which is the same equation as (3.5). Thus (3.3) and (3.4) are equivalent and this proves the claim.
\end{proof}
\begin{lem}\label{lem:sur}
Let $\M \in D^+(\B)$. Then there is a quasi-isomorphism of dg-$\B$-modules $\M \cong F \circ G (\M)$.
\end{lem}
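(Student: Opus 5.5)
Let $\M \in C^+(\B)$ be a bounded-below representative; any object of $D^+(\B)$ admits one after truncation, since $\B$ is connective. By Lemma \ref{lem:fun} we have $G(\M) \cong \A \otimes^K \M$, and by Lemma \ref{lem:F} we have $F(G(\M)) \cong \B^{\vee} \otimes^K (\A \otimes^K \M)$. The plan is to identify this with $(\B^{\vee} \otimes^K \A) \otimes_{\B} \M$, or equivalently with $\Hom_{\B}(\K_{\B},\M)$ pushed back through $\K_{\A}$. The most economical route is to produce a natural map $\M \to F G(\M)$ and check on stalks that it is a quasi-isomorphism.

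The natural map is the unit of the adjunction of Lemma \ref{lem:adj}. Concretely, one sends $m \mapsto \sum \pm\, b^{*} \otimes (1 \otimes b m)$, where $b$ runs over a local basis of $\B$ and $b^{*}$ over the dual basis of $\B^{\vee}$. Equivalently, in the description $F(\N) \cong \Hom_{\str}(\B,\N)$ used in the proof of Lemma \ref{lem:adj}, this is $m \mapsto (b \mapsto 1 \otimes bm)$. Compatibility with the differentials is exactly the identity (3.5) with $f = \mathrm{id}$, so this map is a morphism of dg-$\B$-modules by the computation already carried out.

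To show it is a quasi-isomorphism, I would rewrite $FG(\M) \cong (\K_{\B}' ) \otimes_{\B} \M$, where
\[
\K_{\B}' := \B^{\vee} \otimes^K \A \otimes^K \B,
\]
viewed as a $(\B,\B)$-bimodule, and filter by the internal degree and cohomological degree of $\M$. Since $\M$ is bounded below, I take the stupid filtration $\sigma_{\geq q}\M$. Each graded piece is an $\str$-module $\M^q$ on which $\B$ acts through $\B \to \str$ modulo lower-order terms. The associated graded of $FG(\M)$ then becomes
\[
\B^{\vee} \otimes^K \A \otimes_{\str} \M^q = F(\A) \otimes_{\str} \M^q .
\]
Here $F(\A) = \K_{\A} \otimes_{\A} \A$, up to the identification $\K_{\A} = \A \otimes^K \B^{\vee}$. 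By the Proposition of \S3.2, $\K_{\A} \to \str$ is a quasi-isomorphism, so each graded piece receives a quasi-isomorphism from $\M^q$. Boundedness below makes the filtration exhaustive and degreewise finite, so a spectral sequence argument, or induction on $\sigma_{\geq q}$ combined with the fact that $\varinjlim$ commutes with cohomology, gives the claim.

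The main obstacle is making the filtration argument legitimate. The Koszul differential on $\A \otimes^K \M$ involves the $\B$-action on $\M$, so $\sigma_{\geq q}\M$ is not a $\B$-submodule unless $\B$ is concentrated in degree $0$. I would therefore first try filtering instead by the internal degree coming from $\A \otimes \B^{\vee}$, using weight in $\A$ minus weight in $\B^{\vee}$ together with the degree of $\M$. The Koszul differentials strictly raise a suitable combined degree, and the associated graded has only the ordinary tensor differential. On that associated graded the acyclicity of the $\A \otimes \B^{\vee}$ part away from weight $0$ is Proposition \ref{qis1} in its graded form, as in the Remark following it. The delicate points are checking that this combined filtration is bounded in each cohomological degree, which uses $\M \in C^+$ together with $\A^{i}=0$ for $i<0$ and $(\B^{\vee})^i=0$ for $i<0$, and tracking the finiteness conditions in the infinite-length case via the truncations $\X^{(n)}$, as was done in \S3.2.
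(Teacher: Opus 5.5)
Your skeleton matches the paper's. You compare $\M$ with $F G(\M)\cong \K_{\A}\otimes^K\M$ via the unit $\varphi$ of Lemma~\ref{lem:adj}, and you compute by a spectral sequence in internal weight using the graded form of Proposition~\ref{qis1}. The gap is in the filtration you finally settle on: as you describe it, it does not work.

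If both Koszul differentials move the filtration index, the associated graded carries only the ordinary tensor differential. Then the $E_1$-term is the cohomology of $\A\otimes_{\str}\B^{\vee}\otimes_{\str}\M$ with the plain differential. This is not concentrated in weight $0$, and Proposition~\ref{qis1} says nothing about it. The acyclicity of $\K_{\A,p}$ for $p\neq 0$ comes precisely from the Koszul differential internal to $\K_{\A}=\A\otimes^K\B^{\vee}$. With the paper's convention $(\B^{\vee})_j=(\B_{-j})^{\vee}$, ``weight in $\A$ minus weight in $\B^{\vee}$'' is exactly such an index: the internal Koszul term changes it by $-2$ and the coupling term by $-1$.

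Nor can the cohomological degree of $\M$ enter the index. The term coupling $\K_{\A}$ to $\M$ changes $|m|$ by $|x_\alpha|$, which is unbounded below when $\X$ has infinite length, while $d_{\M}$ raises it by $1$. So no combination involving $|m|$ is monotone along the differential.

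The missing point is to filter by the total internal weight $p$ of $\K_{\A}$ alone. This weight is preserved by $d_{\A}$, $d_{\B^{\vee}}$, $d_{\M}$ and the Koszul differential inside $\K_{\A}$. It is lowered by exactly one by the coupling term $k\otimes m\mapsto\pm\sum_\alpha kx_\alpha^*\otimes x_\alpha m$.

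So $F G(\M)$ is the totalization of $\K_{\A,0}\otimes_{\str}\M\to\K_{\A,-1}\otimes_{\str}\M\to\cdots$. Here $\operatorname{gr}_p=\K_{\A,p}\otimes_{\str}\M$ carries $d_{\K_{\A}}\otimes 1+1\otimes d_{\M}$. By the graded form of Proposition~\ref{qis1}, it is acyclic for $p\neq0$ and quasi-isomorphic to $\M$ for $p=0$.

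This is the paper's argument. The paper also streamlines the comparison with the retraction $\psi$, defined as projection to weight $0$ followed by $\K_{\A,0}\to\str$. Since $\psi\circ\varphi=\mathrm{id}_{\M}$, it suffices to show that $\psi$ is a quasi-isomorphism.

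On convergence, your justification is not enough. The vanishing of $\A^i$ and $(\B^{\vee})^i$ for $i<0$ does not by itself bound the filtration in each cohomological degree. What bounds it is that weight $p$ forces cohomological degree $\geq -p$ in $\K_{\A}$. This holds for the $\A$ factor always, and for the $\B^{\vee}$ factor when $\E^0=0$.
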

\begin{proof}
Let $\M \in D^+(\B)$. First we define a homomorphism $\varphi : \M \to F \circ G (\M) \cong \K_{\A} \otimes_{\str}^K \M$ as the unit map of adjoint functors in Lemma \ref{lem:adj}. To prove that $\varphi$ is a quasi-isomorphism, consider a homomorphism of dg-$\str$-modules
\begin{align*}
\psi :\K_{\A} \otimes^K \M \to \M,
\end{align*}
which is defined by the projection onto $0$-th degree. Then, $\psi \circ \varphi = \operatorname{id}_{\M}$, so it is enough to show that $\psi$ is a quasi-isomorphism. Now, $ \K_{\A} \otimes^K \M$ is the total complex of a double complex
\begin{align*}
0 \to \K_{\A,0} \otimes_{\str} \M \to \K_{\A,-1} \otimes_{\str} \M \to \K_{\A,-2} \otimes_{\str} \M \to\cdots.
\end{align*}
where the horizontal differentials are given by the Koszul differentials, and note that each $\K_{\A,p} \otimes_{\str} \M$ forms a bounded-below complex with the usual differential of tensor products. From this, we obtain a covergent spectral sequence (see e.g. \cite[\S 5.6]{Wei})
\begin{align}
E^{p,q}_2 = H^q(\K_{\A,p} \otimes_{\str} \M) \Rightarrow H^{p+q} (\K_{\A} \otimes^K \M).
\end{align}
Let us calculate $H^q(\K_{\A,p} \otimes_{\str} \M)$. Note that $\K_{\A} \to \str$ gives a quasi-isomorphism of graded dg-algebras, and hence $\K_{\A,p}$ is acyclic for $p \neq 0$ and $\K_{\A,0} \to \str$ is a quasi-isomorphism. Since $\K_{\A,p}$ is a complex consisting of locally free sheaves, for $p \neq 0$ we have
\begin{align*}
H^q(\K_{\A,p} \otimes_{\str} \M) = 0,
\end{align*}
and
\begin{align*}
H^q(\K_{\A,0} \otimes_{\str} \M) &= H^q(\K_{\A,0} \otimes_{\str}^{\mathbb{L}} \M)\\
&\cong H^q(\str \otimes_{\str}^{\mathbb{L}} \M)\\
&\cong H^q(\M).
\end{align*}
Thus we get $H^p(\K_{\A} \otimes^K \M) \cong H^p(\M)$ for all $p \in \mathbb{Z}$ by the spectral sequence (3.6).
Moreover, this isomorphism is induced by a morphism of double complexes
\begin{align*}
\xymatrix{
	0 \ar@{>}[r] \ar@{>}[d]&\K_{\A,0}\otimes \M \ar@{>}[r] \ar@{>}[d] &\K_{\A,-1}\otimes \M \ar@{>}[r] \ar@{>}[d]&\K_{\A,-2} \otimes \M \ar@{>}[r] \ar@{>}[d]& \cdots \\
	0 \ar@{>}[r]&\ \ \M \ \ \ar@{>}[r] &\ \ 0\ \  \ar@{>}[r] &\ \ 0 \ \  \ar@{>}[r] & \cdots
}
\end{align*}
Moreover, taking total complexes of these double complexes we obtain $\psi$. Hence $\psi$ is a quasi-isomorphism, and so is $\varphi$.
\end{proof}

\begin{lem}\label{lem:bound}
The functor $F:D(\A) \to D(\B)$ preserves bounded-below complexes. In other words, $F$ induces a functor
\begin{align*}
F:D^+(\A) \to D^+ (\B).
\end{align*}
\end{lem}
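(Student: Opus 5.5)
We show that the derived functor of $F$ sends objects with bounded-below cohomology to objects with bounded-below cohomology. By Lemma \ref{lem:pf} and the K-projectivity of $\K_{\A}$ at each stalk, $F=\K_{\A}\otimes_{\A}-$ preserves quasi-isomorphisms; indeed, acyclicity can be tested stalkwise and tensor products commute with taking stalks. Hence $F$ descends to $D(\A)\to D(\B)$ without deriving. It therefore suffices to show the following: every $\M\in D(\A)$ with $H^i(\M)=0$ for $i\ll 0$ is quasi-isomorphic to an honest bounded-below dg-module $\M'\in C^+(\A)$. Once we have this, Lemma \ref{lem:F} gives $F(\M)\cong F(\M')\cong \B^{\vee}\otimes^K\M'\in C^+(\B)$, and we are done.

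To produce $\M'$, we use the smart truncation $\tau^{\geq N}$, where $N$ is chosen with $H^i(\M)=0$ for $i<N$. The key point is that $\A$ is coconnective with $\A^0=\str$ and $\A^i=0$ for $i<0$. Indeed, the generators $(\E^{-i})^{\vee}$ sit in cohomological degree $i+1\geq 1$, so $\A=\str\oplus\A^{\geq 1}$. For such a dg-algebra, the subcomplex
\[
\tau^{\geq N}\M := \bigl(0\to \M^N/d(\M^{N-1})\to \M^{N+1}\to \cdots\bigr)
\]
is a quotient dg-$\A$-module of $\M$. To check this, note that the kernel $\tau^{<N}\M=(\cdots\to\M^{N-2}\to\M^{N-1}\to d\M^{N-1}\to 0)$ is an $\A$-submodule. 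This holds because $a\in\A^{j}$ with $j\geq 1$ sends $\M^{N-1}$ into degrees $\geq N$. Thus for $m\in\M^{N-1}$ we need $am\in d(\M^{N-1})$ when $j=1$, and more generally that the degree-$\geq N$ components of the submodule generated by $\tau^{<N}\M$ stay inside it. This is the step requiring care, since $\M^{<N}$ itself is not closed under the $\A$-action.

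Because of this, I would instead use the cotruncation in the other direction, which behaves well for coconnective algebras. The submodule
\[
\M^{\geq N}_{\mathrm{naive}}\supset \ker(d)\cap \M^{N},
\]
that is $\tau'\M:=(0\to Z^N(\M)\to \M^{N+1}\to\cdots)$, is an $\A$-submodule, since $\A$ only raises degree and preserves cocycles by the Leibniz rule. However, it has the wrong cohomology, killing the classes below $N$. Since those classes vanish, the inclusion $\tau'\M\to\M$ would be a quasi-isomorphism provided $H^i(\M)=0$ for $i<N$ and the lower part is acyclic, which is exactly our hypothesis. So the concrete plan is as follows. First, check that $\tau'\M=Z^N\oplus\M^{>N}$ is a dg-$\A$-submodule, using $\A^{<0}=0$, $\A^0=\str$ and $d(a z)=d(a)z\pm a\,dz$. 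Second, check that the inclusion induces isomorphisms on $H^i$ for $i\geq N$ and that both sides have vanishing $H^i$ for $i<N$. Third, conclude via Lemma \ref{lem:F}.

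The main obstacle is the first step: making sure $\A$-stability genuinely holds. This is where coconnectivity of $\A$ enters, and it is also the reason the analogous statement for $G$ requires the separate argument of Lemma \ref{lem:fun}. In the graded setting, the same argument applies verbatim, degree by degree in the internal grading.
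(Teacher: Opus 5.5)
Your first step (descending $F$ to derived categories via the stalkwise K-projectivity of $\K_{\A}$) and your final appeal to Lemma \ref{lem:F} are fine. The $\A$-stability of $\tau'\M=Z^N(\M)\oplus\M^{>N}$, which you flagged as the main obstacle, is also fine. There is a genuine gap, but it is in your second step: the inclusion $\tau'\M\hookrightarrow\M$ is not a quasi-isomorphism. Since $\tau'\M$ has nothing below degree $N$ and its differential vanishes on $Z^N(\M)$, you get $H^N(\tau'\M)=Z^N(\M)$ and $H^{N+1}(\tau'\M)=Z^{N+1}(\M)$. You need $Z^N(\M)/d(\M^{N-1})$ and $Z^{N+1}(\M)/d(\M^N)$ instead. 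Equivalently, the quotient $\M/\tau'\M$ has $H^{N-1}\cong d(\M^{N-1})$ and $H^N\cong d(\M^N)$, and the hypothesis $H^{<N}(\M)=0$ does not control these. For example, take $\M=(\str\xrightarrow{\mathrm{id}}\str)$ in degrees $N-1,N$, with $\A$ acting through the augmentation $\A\to\A^0=\str$. It is acyclic, yet $\tau'\M=\str[-N]$.

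The brutal truncation $\M^{\geq N}$ has the same defect in degree $N$, and, as you noticed, the smart truncation $\tau^{\geq N}$ is not $\A$-stable. For the coconnective algebra $\A$, none of these truncations is both $\A$-linear and cohomologically correct. This is exactly the content of the paper's remark that the lemma does not follow from Lemma \ref{lem:F}. Bounded-below representability in $D^+(\A)$ is really a consequence of the Koszul equivalence: apply $G$ to a bounded-below model of $F(\M)$. But the proof of that equivalence relies on the present lemma, so you cannot assume representability here, and no naive chain-level truncation of $\M$ provides it.

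The paper never replaces $\M$ by another representative. For an arbitrary $\M$ it writes $F(\M)\cong\B^{\vee}\otimes^K\M$ and filters by the internal degree of $\B^{\vee}$. This makes $F(\M)$ the total complex of a double complex whose columns are $\B^{\vee}_{-n}\otimes_{\str}\M$, with the ordinary tensor differential, and whose horizontal maps are the Koszul differentials. It then bounds the cohomology of each column from below, using that the pieces of $\B^{\vee}$ are complexes of locally free sheaves with vanishing low cohomology together with $H^{<N}(\M)=0$. Finally it concludes $H^{<N}(F(\M))=0$ from the spectral sequence of this filtration. So the lower bound has to come from the coconnectivity of $\B^{\vee}$, on the $\B$-side of the Koszul tensor product, not from a bounded-below model of $\M$.
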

\begin{proof}
Let $\M \in D^+(\A)$. Then, as a complex of $\str$-modules, we have
\begin{align*}
F(\M) \cong \K_{\A} \otimes_{\A} \M \cong \B^{\vee} \otimes^K \M,
\end{align*}
and the complex $\B^{\vee} \otimes^K \M$ is a total complex of
\begin{align*}
\cdots \to \B_n^{\vee} \otimes_{\str} \M \to \B_{n-1}^{\vee} \otimes_{\str} \M \to \B_1^{\vee} \otimes_{\str} \M \to \B_0^{\vee} \otimes_{\str} \M \to 0,
\end{align*}
where the horizontal arrows denote the Koszul differentials. Then we have a convergent spectral sequence
\begin{align*}
E_1^{p,q}=H^q(\B_{-p}^{\vee} \otimes_{\str} \M) \Rightarrow H^{p+q}(F(\M)).
\end{align*}
Since $\M \in D^+(\A)$, there exists $N \in \mathbb{Z}$ satisfying $H^q(\M)=0$ for $q < N$. On the other hand, we have $H^i(\B_p^{\vee})=0$ for $i<-p$, and hence $H^q(\B_{-p}^{\vee} \otimes_{\str} \M) = 0 $ for $q < N-p$. From the spectral sequence above, we obtain $H^p(F(\M))=0$ for $p<N$. Thus $F(\M) \in D^+(\B)$.
\end{proof}
\begin{rem}
This lemma does not follows from lemma \ref{lem:F} since $\A$ is not connective.
\end{rem}
Now, we are ready to prove Theorem \ref{thm:main}.
\begin{proof}
We will only prove the ungraded case. The graded case can be proven similarly. First, consider a functor
\begin{align*}
F : \D(\A) \to \D(\B),\ \  \M \mapsto \K_{\A} \otimes_{\A} \M.
\end{align*}
Now, there is an equivalence of $\infty$-categories
\begin{align*}
\D(\A) \cong \lim_{U \in \operatorname{AffOp(X)}} \D(\A|_U),\\
\D(\B) \cong \lim_{U \in \operatorname{AffOp(X)}}\D(\B|_U),
\end{align*}
where $\operatorname{AffOp(X)}$ denotes the category of affine open sets of $X$, and $F$ is induced by
\begin{align*}
F_U:\D(\A|_U) \to \D(\B|_U), \M \mapsto \K_{\A}|_U \otimes_{\A|_U} \M.
\end{align*}
Hence it is enough to show that $F_U$ induces equivalences 
\begin{align*}
\D(\A|_U) &\overset{\sim} \to \operatorname{IndCoh}\B|_U, \\
\D^+(\A|_U) &\overset{\sim} \to \D^+(\B|_U),
\end{align*}
for any affine open sets $U \subset X$. Thus we may assume that $X$ is affine.

Consider the homomorphism induced by $F$
\begin{align*}
\A \cong \operatorname{End}_{\A}(\A) \to \operatorname{REnd}_{\B}(\K_{\A}).
\end{align*}
Now we have
\begin{align*}
\operatorname{REnd}_{\B}(\K_{\A}) &\cong \operatorname{REnd}_{\B}(\str)\\
&\cong \operatorname{Hom}_{\B}(\K_{\B},\str)\\
&\cong \A.
\end{align*}
Hence the above homomorphism is a quasi-isomorphism, and thus we have an equivalence
\begin{align*}
\operatorname{Perf} \A \overset{\sim}{\to} \operatorname{Coh} \B.
\end{align*}
Taking ind-completion, we obtain
\begin{align*}
\D(\A) \overset{\sim}{\to} \operatorname{IndCoh} \B.
\end{align*}
From this, we obtain a fully faithful functor $\D^+(\A) \to \operatorname{IndCoh}^+\B \cong \D^+(\B)$ by lemma \ref{lem:bound}. Hence it is enough to show that this functor is essentially surjective, but this immediately follows from Lemma \ref{lem:sur}.
\end{proof}

\section{Applications to Derived Loop Spaces}
In this section, we give an application of Theorem \ref{thm:main} to the derived category of derived loop spaces. Hereafter, we will always assume that the characteristic of $k$ is 0.
\subsection{Koszul Duality for Derived Loop Spaces}
The derived loop space $\mathcal{L}X$ of a derived scheme $X$ is a derived algebraic analogue of free loop spaces defined to be $\operatorname{Map}(S^1,X)$ where $S^1$ is a constant functor valued at $\Delta^1 \coprod_{\{0\} \coprod \{1\}} \Delta^1 \cong * \coprod_{* \coprod *}*$. By definition, this can be written as $\mathcal{L}X = X \times_{X \times X} X$. If $X=\operatorname{Spec} A$ is a smooth affine scheme, then $\mathcal{L}X = \operatorname{Spec} A \otimes_{A \otimes_k A}^{\mathbb{L}} A$, which is isomorphic to $\operatorname{Spec} \Sym (\Omega_{A/k}[1])$ by Hochschild-Kostant-Rosenberg isomophism \cite{HKR}. Ben-Zvi and Nadler \cite{BZ} generalized this isomorphism to general schemes. According to them, derived loop spaces can be described by the cotangent complex as follows:
\begin{prop} ( \cite[Proposition 4.4.]{BZ}) Let $X$ be a separated, quasi-compact scheme over $k$. Then 
\begin{align*}
\mathcal{L}X \cong \operatorname{Spec}_{\str}\Sym (\mathbb{L}_{X/k}[1]).
\end{align*}
\end{prop}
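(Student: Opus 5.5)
The plan is to reduce to the affine case and then compare the derived self-intersection of the diagonal with the relative spectrum of $\Sym(\mathbb{L}_{X/k}[1])$, using the Hochschild--Kostant--Rosenberg filtration in characteristic zero.

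\emph{Reduction to the affine case.} Since $X$ is separated, the diagonal $\Delta: X \to X\times X$ is a closed immersion. Hence $\mathcal{L}X = X\times_{X\times X}X$ is affine over $X$, namely $\mathcal{L}X \cong \operatorname{Spec}_{\str}(\str\otimes^{\mathbb{L}}_{\mathcal{O}_{X\times X}}\str)$, where $\str$ is regarded as an $\mathcal{O}_{X\times X}$-algebra via $\Delta$. Both sides of the claimed isomorphism are therefore relative spectra of sheaves of commutative dg-algebras over $X$, and their formation is compatible with restriction to affine opens. So it suffices to construct, for each affine open $U=\operatorname{Spec}A$, an equivalence of commutative dg-algebras
\begin{align*}
A\otimes^{\mathbb{L}}_{A\otimes_k A}A \simeq \Sym_A(\mathbb{L}_{A/k}[1])
\end{align*}
that is functorial in $A$, so that the local equivalences glue. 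Quasi-compactness of $X$ lets us work with a finite affine cover, and separatedness makes the intersections affine.

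\emph{Affine case.} Choose a simplicial (or cofibrant dg) resolution $P\to A$ by polynomial $k$-algebras, functorially in $A$. Then $\mathbb{L}_{A/k}\simeq \Omega_{P/k}\otimes_P A$, and $A\otimes^{\mathbb{L}}_{A\otimes_kA}A \simeq A\otimes_P (P\otimes^{\mathbb{L}}_{P\otimes_kP}P)$. For a polynomial algebra $P$ (smooth over $k$), the classical HKR isomorphism gives
\begin{align*}
P\otimes^{\mathbb{L}}_{P\otimes P}P\simeq \Sym_P(\Omega_{P/k}[1]).
\end{align*}
This can be realized explicitly by a Koszul resolution of the diagonal, and it is natural in $P$. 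Applying it levelwise and base-changing to $A$ gives $\Sym_A(\mathbb{L}_{A/k}[1])$. Here $\operatorname{char}k=0$ is used to identify derived symmetric powers with ordinary ones (cf.\ the Remark on $\Sym$ preserving quasi-isomorphisms), and so to pass between the simplicial and dg pictures.

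\emph{Main obstacle.} The HKR map is not canonically multiplicative at the level of a fixed model. The cleanest way to obtain a functorial algebra equivalence is to use the universal property instead of explicit resolutions. Viewing $\mathcal{L}X=\operatorname{Map}(S^1,X)$, the algebra $A\otimes^{\mathbb{L}}_{A\otimes A}A = A\otimes S^1$ is the free $S^1$-tensor. Compare this with $A\otimes B S^1_{\text{formal}}$, and use that in characteristic zero there is a natural equivalence $\mathcal{O}(B\widehat{\mathbb{G}}_a)$-type comparison identifying $A\otimes S^1$ with $\Sym_A(\mathbb{L}_A[1])$ via the map $d: A\to \mathbb{L}_A$ extended multiplicatively. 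Concretely, I would define a natural map of commutative $A$-algebras $\Sym_A(\mathbb{L}_{A}[1])\to A\otimes^{\mathbb{L}}_{A\otimes A}A$. It is adjoint to the $A$-module map $\mathbb{L}_A[1]\to A\otimes^{\mathbb{L}}_{A\otimes A}A$ arising from $I/I^2$, where $I$ is the ideal of the diagonal: indeed $\mathbb{L}_A$ is identified with the derived conormal of $\Delta$, giving $\mathbb{L}_A[1]\to \operatorname{Tor}$ in degree $-1$. Both sides are left Kan extended from polynomial algebras, by the sifted-colimit preservation of $\Sym$, of $\mathbb{L}$, and of derived tensor products. So it is enough to check that this map is an equivalence on polynomial algebras, which is the classical HKR theorem. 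Naturality then gives the gluing over $X$.
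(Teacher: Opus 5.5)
\textbf{There is a genuine gap: the natural comparison map is never constructed.} The paper gives no proof of this statement and simply quotes Ben-Zvi--Nadler. Their argument, roughly, uses that separatedness makes $\mathcal{L}X$ a nilpotent thickening of $X$. Hence $\mathcal{L}X$ agrees with its formal completion along constant loops, and that completion is compared with the odd tangent bundle via the affinization of the circle in characteristic zero. Your architecture is a legitimate alternative of To\"{e}n--Vezzosi type:
\begin{itemize}
\item reduce to affines, using $\mathcal{L}X|_U\simeq\mathcal{L}U$;
\item note that $A\mapsto HH(A):=A\otimes^{\mathbb{L}}_{A\otimes_kA}A=A\otimes S^1$ and $A\mapsto\Sym_A(\mathbb{L}_{A/k}[1])$ both preserve sifted colimits;
\item check on polynomial algebras by HKR.
\end{itemize}

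The missing map fails in two places. First, the Koszul resolution of the diagonal depends on a choice of variables. It is therefore not natural for arbitrary algebra maps between polynomial algebras, such as the face maps of a free simplicial resolution, so your levelwise argument cannot use it.

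Second, and more seriously, the derived conormal of the diagonal gives a map in the opposite direction to the one you need. Tensoring $I\to A\otimes A\to A$ with $A$ gives $\overline{HH}(A)\simeq (I\otimes^{\mathbb{L}}_{A\otimes A}A)[1]$. The canonical map $I\otimes^{\mathbb{L}}_{A\otimes A}A\to\mathbb{L}_{A/(A\otimes A)}[-1]\simeq\mathbb{L}_{A/k}$ then yields the projection $\overline{HH}(A)\to\mathbb{L}_{A/k}[1]$ onto indecomposables. This projection is an isomorphism only on $H^{-1}$. The identification $\operatorname{Tor}_1^{A\otimes A}(A,A)=I/I^2$ lives on homology. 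The universal property of $\Sym$, however, needs an honest ($\infty$-categorical) $A$-module section $\mathbb{L}_{A/k}[1]\to HH(A)$, natural in $A$.

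Producing such a natural splitting of the first stage of the HKR filtration, compatibly with multiplication, is exactly where characteristic zero enters essentially. In characteristic $p$ the HKR filtration does not split naturally in general. So characteristic zero is needed for more than identifying derived and ordinary symmetric powers.

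\textbf{How to close the gap.} The fix also removes your ``main obstacle''. For a commutative $k$-algebra $P$ with $\operatorname{char}k=0$, let $C_\bullet(P)$ be the Hochschild complex with its shuffle product, i.e.\ the normalized chains of the cyclic bar construction $P^{\otimes(\bullet+1)}$. Consider
\[
\pi: C_\bullet(P)\to\Omega^\bullet_P=\Sym_P(\Omega_{P/k}[1]),\qquad a_0\otimes a_1\otimes\cdots\otimes a_n\mapsto \tfrac{1}{n!}\,a_0\,da_1\cdots da_n .
\]
\begin{itemize}
\item $\pi$ is a strictly natural morphism of commutative dg-algebras: it kills the Hochschild boundary, and the factor $1/n!$ makes it multiplicative.
\item $\pi$ is a quasi-isomorphism for smooth $P$, by HKR.
\item For $P$ polynomial, $C_\bullet(P)$ is a functorial model of $P\otimes S^1$.
\end{itemize}
This gives a natural equivalence on polynomial algebras. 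It then extends to all $A$ by left Kan extension, as you intended, and glues over $X$.

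Alternatively, in characteristic zero the circle action on $A\otimes S^1$ gives a natural derivation $\mathbb{L}_{A/k}\to HH(A)[-1]$, namely Connes' operator $B$. Its adjoint $\Sym_A(\mathbb{L}_{A/k}[1])\to HH(A)$ is the HKR map on polynomial algebras.
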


Now, we will apply Theomrem \ref{thm:main} to the derived category of loop spaces $\D(\mathcal{L}X) = \D(\Sym (\mathbb{L}_{X/k}[1]))$. To do this, first we need to check that $\mathbb{L}_{X/k}[1]$ satisfies the assumptions of Theorem \ref{thm:main}, at least affine locally. To see this, we will use Lurie's criterion.
\begin{dfn}
Let $A$ be a connective commutative dg-algebra over $k$, and let $\operatorname{CAlg}_k^{\geq n}$ denote the $\infty$-category of connective commutative dg-algebras $B$ over $k$ satisfying $H^i(B)=0$ for $i < n$. Similarly, we define $\D(A)^{\geq n}$ to be the $\infty$-category of dg-$A$-modules $M$ satisfying $H^i(M)=0$ for $i < n$.
\begin{enumerate}
\item We say that $A$ is almost of finite presentation over $k$ if the truncation $\tau_{\geq n} A$ is a compact object of $\operatorname{CAlg}_k^{\geq n}$ (that is, $\operatorname{Map}(\tau_{\geq n}A,-)$ commutes with filtered colimits) for all $n \leq 0$, where the truncation $\tau_{\geq n} A$ is defined by
\begin{align*}
(\tau_{\geq n} A)^i=
\begin{cases}
A^i &(i>n),\\
\operatorname{Coker} d_A^{n-1} \ \ &(i=n),\\
0 &(i<n).
\end{cases}
\end{align*}
\item Let $M$ be a dg-$A$-module. We say that $M$ is almost perfect if $M$ satisfies the following conditions:
\begin{itemize}
\item $M$ is bounded above. Namely, $H^i(M)=0$ for sufficiently large $i$.
\item For any $n \in \mathbb{Z}_{<0}$, the truncation $\tau_{\geq n}M$ is a compact object of $\D(A)^{\geq n}$.
\end{itemize}
\end{enumerate}
\end{dfn}

\begin{thm} \label{thm:Lur}( \cite[Theorem 7.4.3.18.]{Lur})
Let $A$ be a connective commutative dg-algebra over $k$. If $A$ is almost of finite presentation over $k$, then $\mathbb{L}_{A/k}$ is almost perfect.
\end{thm}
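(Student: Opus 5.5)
The plan is to present $A$ by a cell decomposition with finitely many cells in each degree, compute the cotangent complex of each stage, and control how far each stage is from $A$ using connectivity estimates for the cotangent complex. Throughout I use the cohomological conventions of this paper: a map $B\to A$ is called $m$-connective if its cofiber has $H^i=0$ for $i>-m$.

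\textbf{Step 1: cell decomposition.} I will build a sequence of connective commutative dg-algebras
\begin{align*}
k=A(-1)\to A(0)\to A(1)\to\cdots
\end{align*}
with compatible maps $A(m)\to A$. Each $A(m+1)$ is obtained from $A(m)$ by a pushout along
\begin{align*}
\Sym(k^{r_m}[m])\to\Sym(\mathrm{Cone}(k^{r_m}[m])),
\end{align*}
with $r_m$ finite, i.e.\ by attaching finitely many cells in degree $-m$. The maps are to be chosen so that $A(m)\to A$ induces isomorphisms on $H^{-i}$ for $i<m$ and a surjection on $H^{-m}$. The induction is the usual one:
\begin{itemize}
\item at stage $m$, first attach cells to kill the kernel of $H^{-m}(A(m))\to H^{-m}(A)$;
\item then attach cells to hit generators of $H^{-m}(A)$.
\end{itemize}

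The finiteness of $r_m$ is where almost finite presentation enters. Compactness of $\tau_{\geq -m}A$ in $\operatorname{CAlg}_k^{\geq -m}$ implies that $H^0(A)$ is a finitely presented $k$-algebra. Combining compactness of $\tau_{\geq -m}A$ with the fact that $\tau_{\geq -m}A(m+1)$ is compact, a retract argument shows the required kernels and cokernels are finitely generated $H^0(A)$-modules. Concretely, one writes $\tau_{\geq -m}A$ as a filtered colimit of finite cell attachments to $\tau_{\geq -m}A(m)$ and uses compactness to factor the identity through a finite stage. I expect this step to be the main obstacle. I would first try to reduce it to the statement that a map of compact objects in $\operatorname{CAlg}_k^{\geq -m}$ has finitely generated homotopy in its fiber in the relevant degree.

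\textbf{Step 2: cotangent complexes of the stages.} For a free algebra, $\mathbb{L}_{\Sym V/k}\cong \Sym V\otimes_k V$. Since cotangent complexes turn pushouts of algebras into pushouts of modules, we get
\begin{align*}
\mathbb{L}_{A(m+1)/A(m)}\cong A(m+1)\otimes_k k^{r_m}[m+1],
\end{align*}
which is free of finite rank. The transitivity triangle
\begin{align*}
\mathbb{L}_{A(m)/k}\otimes_{A(m)}A(m+1)\to \mathbb{L}_{A(m+1)/k}\to \mathbb{L}_{A(m+1)/A(m)}
\end{align*}
then shows by induction that each $\mathbb{L}_{A(m)/k}$ is a perfect $A(m)$-module. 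Hence $\mathbb{L}_{A(m)/k}\otimes_{A(m)}A$ is a perfect $A$-module.

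\textbf{Step 3: comparison with $A$.} Apply the transitivity triangle to $k\to A(m)\to A$:
\begin{align*}
\mathbb{L}_{A(m)/k}\otimes_{A(m)}A\to \mathbb{L}_{A/k}\to \mathbb{L}_{A/A(m)}.
\end{align*}
By construction, $A(m)\to A$ is $m$-connective. The standard connectivity estimate for the cotangent complex then gives that $\mathbb{L}_{A/A(m)}$ has $H^i=0$ for $i>-m-1$; this is the algebraic Hurewicz statement, proved by reducing to the free case and using characteristic $0$ so that $\Sym$ is computed by coinvariants. Consequently
\begin{align*}
\tau_{\geq -m}\bigl(\mathbb{L}_{A(m)/k}\otimes_{A(m)}A\bigr)\to\tau_{\geq -m}\mathbb{L}_{A/k}
\end{align*}
is an equivalence.

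Truncations of perfect connective modules are compact in $\D(A)^{\geq -m}$, because $\operatorname{Map}(\tau_{\geq -m}P,N)=\operatorname{Map}(P,N)$ for $N\in\D(A)^{\geq -m}$. Hence $\tau_{\geq -m}\mathbb{L}_{A/k}$ is compact in $\D(A)^{\geq -m}$ for every $m$. Finally, $\mathbb{L}_{A/k}$ is connective, so it is bounded above. Together these show that $\mathbb{L}_{A/k}$ is almost perfect.
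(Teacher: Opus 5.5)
The paper gives no proof of this statement; it cites \cite[Theorem 7.4.3.18]{Lur}. There this implication is obtained directly from the universal property of the cotangent complex, with no cell model. For a connective $A$-module $M$ with $H^i(M)=0$ for $i<-m$, the space $\operatorname{Map}(\tau_{\geq -m}\mathbb{L}_{A/k},M)\simeq\operatorname{Map}(\mathbb{L}_{A/k},M)$ is the fiber over the identity of $\operatorname{Map}(\tau_{\geq -m}A,\tau_{\geq -m}A\oplus M)\to\operatorname{Map}(\tau_{\geq -m}A,\tau_{\geq -m}A)$ in $\operatorname{CAlg}_k^{\geq -m}$. Three facts then give compactness of $\tau_{\geq -m}\mathbb{L}_{A/k}$: $M\mapsto\tau_{\geq -m}A\oplus M$ preserves filtered colimits, $\tau_{\geq -m}A$ is compact, and filtered colimits of spaces commute with fibers.

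You instead first build a cell model of $A$ with finitely many cells in each degree, which is essentially the hard half of the characterization of almost finite presentation. You then read off $\mathbb{L}_{A/k}$ using the Hurewicz-type estimate; the direct argument needs neither tool. Your route does buy an explicit model. With $A_\infty=\operatorname{colim}_mA(m)\simeq A$, the module $A\otimes_{A_\infty}\mathbb{L}_{A_\infty/k}\simeq\mathbb{L}_{A/k}$ is semi-free with finitely many generators in each degree. For discrete $A$ this is exactly the conclusion of Corollary \ref{cor:cotan}, and in that case Step 1 is just Tate's construction over a Noetherian ring.

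As written, though, the proposal has two problems. First, Step 1 is the only place the hypothesis is used, and you leave it as ``the main obstacle''. Your retract idea does work, but it has to be set up carefully.
\begin{itemize}
\item The retraction must take place under the finite cell algebra $B$ already built. This is legitimate because $\tau_{\geq -n}B$ is also compact, so $\tau_{\geq -n}A$ is compact in the undercategory of $\tau_{\geq -n}B$, whose mapping spaces are fibers.
\item The relative cell structure must be controlled. Let $A'$ be $A(m)$ with the kernel on $H^{-m}$ killed. Build $A$ from $A'$ by first adjoining free generators in degree $-m-1$, i.e.\ pushouts along $k\to\Sym(k^s[m+1])$, which your single pushout shape does not include. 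Afterwards adjoin only generators of degree $\leq -m-2$.
\item With this choice, every finite subcomplex $A'_\beta$ has $H^{-m-1}(A'_\beta)$ generated by the image of $H^{-m-1}(A')$ and finitely many classes. Compactness gives, for some $\beta$, a section of $\tau_{\geq -m-1}A'_\beta\to\tau_{\geq -m-1}A$ under $A'$. Hence the cokernel of $H^{-m-1}(A')\to H^{-m-1}(A)$ is finitely generated.
\item The kernels need no compactness over a field: $H^0(A(m))$ is Noetherian and $H^{-m}(A(m))$ is a finitely generated module over it.
\end{itemize}

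Second, Step 3 is off by one. Suppose $A(m)\to A$ is bijective on $H^{-i}$ for $i<m$ and surjective on $H^{-m}$. Then $\mathbb{L}_{A/A(m)}$ vanishes in degrees $\geq -m$ but not necessarily in degree $-m-1$, so the triangle only gives a surjection on $H^{-m}$. For example, with $A=k[x]/(x^2)$ and $A(0)=k[x]$, the map $A\,dx\to\Omega_{A/k}=A\,dx/(2x\,dx)$ is not injective. Comparing with $A(m+1)$ instead gives $\tau_{\geq -m}(\mathbb{L}_{A(m+1)/k}\otimes_{A(m+1)}A)\simeq\tau_{\geq -m}\mathbb{L}_{A/k}$, and the rest of Step 3 then goes through.
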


\begin{cor} \label{cor:cotan}
Let $A$ be a $k$-algebra of finite type. Then the cotangent complex $\mathbb{L}_{A/k}$ is quasi-isomorphic to a connective complex consisting of free $A$-modules of finite rank.
\end{cor}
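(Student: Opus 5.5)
The plan is to deduce the corollary from Theorem \ref{thm:Lur} together with the standard fact that almost perfect modules over a connective ring with Noetherian $H^0$ admit resolutions by finitely generated free modules, built degree by degree.

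\textbf{Step 1: identify the hypotheses.} Regard $A$ as a connective commutative dg-algebra concentrated in degree $0$. Since $A$ is of finite type, it is finitely presented, say $A=k[t_1,\dots,t_r]/(f_1,\dots,f_s)$. A finitely presented discrete algebra is a compact object in $\operatorname{CAlg}_k^{\geq n}$ for every $n\le 0$, since it is a pushout of free algebras on finitely many generators. Concretely, I would check compactness of $\tau_{\geq n}A=A$ by writing it as the $n$-truncation of a finite cell complex. Hence $A$ is almost of finite presentation, and Theorem \ref{thm:Lur} shows that $\mathbb{L}_{A/k}$ is almost perfect. It is also connective, because it can be computed from a simplicial polynomial resolution.

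\textbf{Step 2: build the resolution inductively.} For a discrete Noetherian ring $A$, an almost perfect connective module $M$ has each $H^{i}(M)$ finitely generated. This follows from compactness of the truncations: $H^{i}(M)$ is compact in the heart, i.e.\ finitely presented, and one uses Noetherianity.

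I then construct free modules $P^{-i}$ of finite rank and a map $P\to M$ inductively.
\begin{itemize}
\item Choose finitely many generators of $H^0(M)$. This gives $P^0=A^{n_0}\to M$, surjective on $H^0$.
\item Given a finite free complex $P_{(i)}$ in degrees $[-i,0]$ with a map to $M$ that is an isomorphism on $H^{j}$ for $j>-i$ and surjective on $H^{-i}$, the cone $C$ is connective after shifting, and its lowest cohomology is finitely generated. This uses the long exact sequence together with finite generation of $H^{-i}(P_{(i)})$, which holds because $A$ is Noetherian and $P_{(i)}$ is a finite free complex.
\item Kill that lowest cohomology by adding a free module $A^{n_{i+1}}$ in degree $-(i+1)$.
\end{itemize}
Passing to the colimit yields a connective complex $P$ of finite free modules with a quasi-isomorphism $P\to M$, since at each degree the resolution stabilizes.

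\textbf{Main obstacle.} I expect the hardest step to be the finiteness statement: showing that the relevant cohomology groups are finitely generated at every stage, equivalently that almost perfectness gives finite generation of the $H^i$. I would first try to invoke the standard characterization in Lurie, \emph{Higher Algebra}, \S7.2.4 (over a left Noetherian connective ring, almost perfect is equivalent to every $H^{i}$ being finitely generated over $H^0$). If that is unavailable, I would alternatively argue directly. Using the polynomial simplicial resolution of $A$ with finitely many generators in each degree (possible since $A$ is Noetherian of finite type), $\mathbb{L}_{A/k}$ is computed by $\Omega$ of the resolution base-changed to $A$. This is already a connective complex of finite free $A$-modules via the normalized chain complex, which would prove the corollary directly.
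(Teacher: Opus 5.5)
Your proposal is correct and follows the paper's argument. The paper also shows that $A$ is almost of finite presentation, applies Theorem \ref{thm:Lur} to get $\mathbb{L}_{A/k}$ almost perfect, deduces that each $H^i(\mathbb{L}_{A/k})$ is finitely generated, and takes a finite free resolution; your inductive construction just spells out that last step.

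Two of your justifications are loose, and these are exactly the two facts the paper takes from Lurie's \emph{Higher Algebra}. First, compactness of $A$ in $\operatorname{CAlg}_k^{\geq n}$ needs Noetherianity, which is Proposition 7.2.4.31: a pushout of finitely generated free algebras recovers $A$ only after $0$-truncation, since in higher truncations it is the truncated Koszul complex. Second, finite generation of every $H^i$ is Proposition 7.2.4.17: only $H^0$ is directly compact in the heart.
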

\begin{proof}
Since $A$ is (ordinary) $k$-algebra of finite type, $A$ is almost of finite presentation over $k$ in the sense defined above (cf. \cite[Proposition 7.2.4.31]{Lur}), and hence $\mathbb{L}_{X/k}$ is almost perfect by Theorem \ref{thm:Lur}. Then each $H^i(\mathbb{L}_{A/k})$ is finitely generated $A$-module (cf. \cite[Proposition 7.2.4.17.]{Lur}). From this, we obtain the claim by taking a free resolution of $\mathbb{L}_{A/k}$.
\end{proof}

Before applying Theorem \ref{thm:main} to the derived category of derived loop spaces, we prepare some notations.

\begin{dfn}
Let $\A$ be a sheaf of graded dg-algebras on $X$. The category $D_{gr}^{\searrow}(\A)$ denotes the full subcategory of $D_{gr}(\A)$ consisting of $\M \in D_{gr}(\A)$ satisfying $H^i(\M)_j = 0$ for $i-2j \ll 0$.
\end{dfn}

Now, we provide applications of Theorem \ref{thm:main} to derived loop spaces.

\begin{thm}\label{thm:loop}
Let $X$ be a separated scheme of finite type over $k$. Then we have equivalences of $\infty$-categories
\begin{align*}
\operatorname{IndCoh}&\mathcal{L}X \cong \D(\Sym (\mathbb{T}_{X/k}[-2])),\\
\operatorname{IndCoh}_{gr}&\mathcal{L}X \cong \D_{gr}(\Sym (\mathbb{T}_{X/k}[-2])) \cong  \D_{gr}(\Sym \mathbb{T}_{X/k}).
\end{align*}
If we restrict to bounded-below subcategories, we obtain equivalences of triangulated categories
\begin{align*}
D^+(\mathcal{L}X) &= D^+(\Sym (\mathbb{L}_{X/k}[1])) \cong D^+(\Sym (\mathbb{T}_{X/k}[-2])), \\
D^+_{gr}(\mathcal{L}X) &=D_{gr}^+(\Sym (\mathbb{L}_{X/k}[1])) \cong D_{gr}^{\searrow}(\Sym \mathbb{T}_{X/k}),
\end{align*}
where $\Sym (\mathbb{L}_{X/k}[1])$ is regarded as a sheaf of graded dg-algebras by considering the sections of  $\mathbb{L}_{X/k}[1]$ to have internal degree $1$.
\end{thm}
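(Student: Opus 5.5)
We apply Theorem \ref{thm:main} with $\X$ a locally free model of $\mathbb{L}_{X/k}[1]$, reducing to affine opens and gluing.

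\textbf{Local reduction.} Both sides are sheaves of $\infty$-categories on the Zariski site: $\operatorname{IndCoh}\mathcal{L}X$, $\D(\Sym(\mathbb{T}_{X/k}[-2]))$ and their graded and bounded-below versions all satisfy descent along affine opens, and $\mathcal{L}U=\mathcal{L}X\times_X U$ for $U\subset X$ open affine. It therefore suffices to construct equivalences over each affine open $U=\operatorname{Spec}A$ that are compatible with restriction to smaller affine opens. The functor of Theorem \ref{thm:main} is $\M\mapsto \K_{\A}\otimes_{\A}\M$, which is built from sheaf data and is visibly compatible with restriction, so once its input is fixed globally (up to quasi-isomorphism) the local equivalences glue.

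\textbf{Choice of model.} By Corollary \ref{cor:cotan}, on $U$ we have $\mathbb{L}_{A/k}\simeq P=(\cdots\to P^{-1}\to P^0\to 0)$ with each $P^i$ finite free. Put $\X:=P[1]$. This is again a connective complex of finite free modules, now with $\X^0=0$, so $\E^0=0$ and the ungraded part of Theorem \ref{thm:main} applies. Since $\operatorname{char}k=0$, $\Sym$ preserves quasi-isomorphisms between such complexes, so by \cite[Proposition 4.4]{BZ}
\[
\mathcal{O}_{\mathcal{L}U}\simeq \Sym(P[1])=\B .
\]
The dual is $\X^{\vee}[-1]=P^{\vee}[-1][-1]=P^{\vee}[-2]$. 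Because $P$ is degreewise finite free, $P^{\vee}$ computes $\mathbb{R}\Hom(\mathbb{L}_{A/k},A)=\mathbb{T}_{A/k}$, and hence $\A=\Sym(\mathbb{T}_{A/k}[-2])$. Independence of the choice of $P$ would follow from the fact that two such resolutions are homotopy equivalent, which induces quasi-isomorphisms of both $\Sym$'s and of the Koszul kernels. I expect this, together with the gluing across affine opens, to be the main technical nuisance. If direct gluing fails, the first fallback is to choose a global resolution on $X$ by locally free sheaves. Since $X$ is separated of finite type, one may hope for enough locally frees (for instance when $X$ is quasi-projective); in general the fallback is to work only with the descent argument.

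\textbf{Conclusions.} With $\X$ fixed, Theorem \ref{thm:main} gives $\D(\A)\simeq\operatorname{IndCoh}\B$, where $\operatorname{IndCoh}\B=\operatorname{IndCoh}\mathcal{L}U$, and gives $D^+(\B)\simeq D^+(\A)$; the graded versions follow in the same way. It remains to prove the graded identification $\D_{gr}(\Sym(\mathbb{T}[-2]))\simeq\D_{gr}(\Sym\mathbb{T})$ and its bounded-below refinement. Here I would use a shearing (regrading) equivalence: send a bigraded module $\M^i_j$ to $\M'^{\,i+2j}_j$. Since generators of $\mathbb{T}[-2]$ have internal degree $-1$, shifting cohomological degree by $2j$ turns $\Sym(\mathbb{T}[-2])$ into $\Sym\mathbb{T}$. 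Because the shift is even, no Koszul signs change, so this is an isomorphism of categories of graded dg-modules that preserves quasi-isomorphisms. Under this regrading, the condition ``$H^i(\M)=0$ for $i\ll0$'' (uniformly in $j$) becomes ``$H^{i}(\M')_j=0$ for $i-2j\ll 0$'', which is precisely $D_{gr}^{\searrow}$. The only care needed is to match the convention in the definition of $D^{\searrow}_{gr}$ with the sign of the internal degree of $\mathbb{T}$ used there.
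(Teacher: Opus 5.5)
The local part of your argument agrees with the paper. The choice $\X=P[1]$ indeed gives $\E^0=0$, so the ungraded case of Theorem \ref{thm:main} applies. The identifications $\B\simeq\mathcal{O}_{\mathcal{L}U}$ and $\A=\Sym(\mathbb{T}_{A/k}[-2])$ are correct. Your regrading is exactly the paper's $\mu(\M)^i_j=\M^{i-2j}_j$, and your translation of ``bounded below'' into the $D^{\searrow}_{gr}$ condition is right.

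\textbf{The gap: gluing from affine opens to $X$.} You flag this as the main nuisance but leave it open, and the mechanism you suggest does not work as stated.
\begin{itemize}
\item The functor $\K_{\A}\otimes_{\A}-$ on $U$ depends on the chosen model $P$. On an overlap, two models are related only by a homotopy equivalence $f$.
\item The Koszul differential is built from the identity element $\sum_\alpha x_\alpha\otimes x_\alpha^*$ of $\X\otimes\X^{\vee}$, which is not functorial in a non-invertible $f$. So you do not even get a direct comparison map of kernels, only a zigzag through an auxiliary complex twisted by $f$.
\item More importantly, a functor $\lim_U\D(\A|_U)\to\lim_U\operatorname{IndCoh}\B|_U$ requires a natural transformation of $\operatorname{AffOp}(X)$-indexed diagrams of $\infty$-categories. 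That means coherent identifications on all iterated overlaps, and pairwise quasi-isomorphisms supply none of the higher data.
\item Your fallback of a global locally free resolution of $\mathbb{L}_{X/k}$ works when $X$ has the resolution property (e.g.\ $X$ quasi-projective), since Theorem \ref{thm:main} then applies directly on $X$. It is not available for an arbitrary separated scheme of finite type.
\item ``Work only with the descent argument'' does not say how the coherent functor is produced.
\end{itemize}

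\textbf{The missing idea: define the local functors intrinsically.} Then compatibility with restriction is automatic, and the resolution is used only to verify properties. The paper proceeds as follows.
\begin{itemize}
\item It considers the diagram $U\mapsto(\operatorname{IndCoh}\B|_U,\str|_U)$ in $(\operatorname{Pr}^L_k)_{\D(k)/}$, with pullback functors as transition maps. The pointing object $\str|_U$ involves no choices and is compatible with restriction.
\item It applies the adjunction $\theta:\operatorname{Alg}_k\rightleftarrows(\operatorname{Pr}^L_k)_{\D(k)/}:E$, where $\theta(A)=(\D(A),A)$ and $E(\mathcal{C},C)=\operatorname{REnd}_{\mathcal{C}}(C)$.
\item The counit is a natural transformation of diagrams. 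It therefore yields a functor $\lim_U\D(\operatorname{REnd}_{\B|_U}(\str|_U))\to\lim_U\operatorname{IndCoh}\B|_U$ with all coherences built in.
\item Only now is a local model chosen. It shows $\operatorname{REnd}_{\B|_U}(\str|_U)\simeq\Hom_{\B}(\K_{\B},\str)\cong\A|_U$.
\item The counit component is the colimit-preserving functor sending $\A|_U$ to $\str|_U\simeq\K_{\A|_U}$. Hence it is $\K_{\A|_U}\otimes_{\A|_U}-$, which is an equivalence by Theorem \ref{thm:main}.
\end{itemize}
Being an equivalence is a property of each component, not extra data. So the dependence on $P$ disappears, and your homotopy-equivalence-of-resolutions step is no longer needed.
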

\begin{proof}
We only prove the ungraded case. The graded case can be proven similarly. For simplicity, we set $\A:=\Sym (\mathbb{T}_{X/k}[-2])$ and $\B:=\Sym (\mathbb{L}_{X/k}[1])$. Note that we have equivalences of $\infty$-categories
\begin{align*}
\operatorname{IndCoh}(\B) &\cong \lim_{U \in \operatorname{AffOp}(X)} \operatorname{IndCoh}(\B|_U), \\
\D(\A) &\cong \lim_{U \in \operatorname{AffOp}(X)} \D(\A|_U).
\end{align*}
First, by \cite[Theorem 4.8.5.11]{Lur}, we have an adjunction of $\infty$-categories
\begin{align*}
\theta : \operatorname{Alg}_k 
\mathrel{\substack{
  \longrightarrow\\[-0.7ex]
  \perp\\[-0.7ex]
  \longleftarrow
}}
(\operatorname{Pr}^L_k)_{\D(k)/} : E
\end{align*}
where $\operatorname{Alg}_k$ denotes the $\infty$-category of dg-algebras over $k$, and $\operatorname{Pr}^L_k$ denotes the $\infty$-category of $k$-linear presentable $\infty$-categories with colimit-preserving functors, and $\theta$ and $E$ are given by  $\theta(A) :=(\D(A),A)$ and $E(\mathcal{C},C) := \operatorname{REnd}_{\mathcal{C}}(C)$, respectively.
Consider a diagram in $ (\operatorname{Pr}^L_k)_{\D(k)/}$ indexed by affine open sets of $X$
\begin{align*}
\operatorname{AffOp}(X) \ni U \longmapsto (\operatorname{IndCoh}\B|_U,\mathcal{O}_X|_U) \in (\operatorname{Pr}^L_k)_{\D(k)/}
\end{align*}
with pullback functors. Then, from the counit map of the above adjunction, we obtain a functor of $\infty$-categories
\begin{align}
\lim_{U \in \operatorname{AffOp}(X)} \D(\operatorname{REnd}_{\B|_U}(\mathcal{O}_X|_U)) \longrightarrow \lim_{U \in \operatorname{AffOp}(X)} \operatorname{IndCoh}\B|_U.
\end{align}
We will investigate the functor induced by the counit map
\begin{align*}
(\D(\operatorname{REnd}_{\B}(\mathcal{O}_X|_U)),\operatorname{REnd}_{\B}(\mathcal{O}_X|_U)) \to (\operatorname{IndCoh}(\B|_U),\mathcal{O}_X|_U) \cong (\operatorname{IndCoh}(\B|_U),\K_{\A|_U}).
\end{align*}
Here, by Corollary \ref{cor:cotan}, $\B|_U$ is quasi-isomorphic to a symmetric algebra of a connective complex consisting of locally free sheaves of finite rank, and hence we can define $\K_{\A|_U}$ and $\K_{\B|_U}$. Thus, by the calculations in the proof of Theorem \ref{thm:main}, we have 
\begin{align*}
(\D(\operatorname{REnd}_{\B}(\mathcal{O}_X|_U)),\operatorname{REnd}_{\B|_U}(\mathcal{O}_X|_U))\cong (D(\A|_U),\A|_U),
\end{align*}
and the functor $D(\A|_U) \to \operatorname{IndCoh}\B|_U$ induced by the counit map is given by $\M \mapsto \K_{\A|_U} \otimes_{\A|_U} \M$, since it is a colimit-preserving functor which sends $\A$ to $\K_{\A|_U}$. Hence this induces equivalences
\begin{align*}
\D(\A|_U) &\overset{\sim}\to \operatorname{IndCoh}\B|_U,\\
\D^+(\A|_U) &\overset{\sim}\to \operatorname{IndCoh}^+\B|_U \cong \D^+(\B|_U).
\end{align*}
by Theorem \ref{thm:main}. Hence the functor (4.1) gives an equivalence
\begin{align*}
\lim_{U \in \operatorname{AffOp}(X)} \D(\A|_U) \overset{\sim}\longrightarrow \lim_{U \in \operatorname{AffOp}(X)} \operatorname{IndCoh}\B|_U.
\end{align*}
Moreover, for affine open sets $V \subset U \subset X$, the functor $D(\A|_U) \to D(\A|_V)$ is given by the pullback functor. Hence we obtain an equivalence
\begin{align*}
\D(\A) \cong \lim_{U \in \operatorname{AffOp}(X)} \D(\A|_U) \longrightarrow \lim_{U \in \operatorname{AffOp}(X)} \operatorname{IndCoh}\B|_U \cong \operatorname{IndCoh}\B.
\end{align*}
Restricting to the bounded-below subcategories, we also have $\D^+(\A) \cong \D^+(\B)$.

The equivalences 
\begin{align*}
\operatorname{IndCoh}_{gr}\mathcal{L}X&\cong\D_{gr}(\Sym (\mathbb{T}_{X/k}[-2])) \cong  \D_{gr}(\Sym \mathbb{T}_{X/k}),\\
D_{gr}^+(\mathcal{L}X)&\cong D_{gr}^+(\Sym (\mathbb{T}_{X/k}[-2])) \cong D_{gr}^{\searrow}(\Sym \mathbb{T}_{X/k}),
\end{align*}
 immediately follow from an equivalence
\begin{align*}
C_{gr}(\Sym \mathbb{T}_{X/k}) \cong C_{gr}(\Sym (\mathbb{T}_{X/k}[-2])),
\end{align*}
which is given by $\mu:C_{gr}(\Sym (\mathbb{T}_{X/k}[-2])) \to C_{gr}(\Sym \mathbb{T}_{X/k})$, $\mu(\M)^i_j=\M^{i-2j}_j$.
\end{proof}

\subsection{Relation to Derived Fromal Stacks}
In this section, we give a geometrical interpretation of the equivalence we obtained in the previous section, by relating the dg-algebra $\Sym (\mathbb{T}_{X/k}[-2])$ and some derived formal stacks. First, we briefly recall the theory of derived formal stacks, developed by Lurie \cite{DAGX} and Hennion \cite{Henn}.

First, we prepare some notations.
\begin{dfn}
Let $A$ be a Noetherian ring over $k$.
\begin{enumerate}
\item Let $\mathcal{S}$ denotes the $\infty$-category of Kan complexes.
\item Let $\operatorname{CAlg}_A$ denote the $\infty$-category of connective commutative dg-algebras over $A$. Let $\operatorname{CAlg}^{sm}_A$ denote the full subcategory of $(\operatorname{CAlg}_A)_{/A}$ spanned by the trivial square-zero extensions $A \oplus M$, where $M$ is a finitely generated free dg-$A$-module concentrated in non-positive degree.
\item Let $\operatorname{dgLie}_A$ denote the $\infty$-category of dg-Lie-algebras over $A$. Let $\operatorname{dgLie}_A^*$ denote the subcategory spanned by free dg-Lie algebras generated by a free dg-$A$-module which is generated by finite number of positive elements. Namely, objects of $\operatorname{dgLie}_A^*$ are free dg-Lie algebras generated by a dg-$A$-module $\bigoplus_{i=1}^r A^{\oplus n_i}[-i]$ for some $r \in \mathbb{Z}_{>0}$ and $n_i \in \mathbb{Z}_{\geq 0}$ for $i=1,2,\cdots,r$.
\end{enumerate}
\end{dfn}
\begin{dfn}
Let $A$ be a Noetherian ring over $k$. A derived formal stack over $A$ is a functor
\begin{align*}
Y:\operatorname{CAlg}^{sm}_A \to \mathcal{S}
\end{align*}
satisfying the following conditions:
\begin{enumerate}
\item $Y$ preserves finite products.
\item For any $B \in \operatorname{CAlg}^{sm}_A$, there is a homotopy equivalence $Y(A \times_B A) \simeq * \times_{Y(B)} *$.
\end{enumerate}
Let $\operatorname{dSt}_A^f$ denote the $\infty$-category of derived formal stacks over $A$.
\end{dfn}
\begin{rem}
This definition is different from the one of formal moduli problems in \cite{DAGX}. However, it is proved that these two definitions are equivalent \cite[Proposition 1.5.10]{Henn}.
\end{rem}
We will extend this definition to arbitrary derived schemes. Let $A$ and $B$ be connective dg-algebras over $k$, and let $A \to B$ a homomorphism of dg-algebras. Then, we have a functor
\begin{align*}
(\operatorname{CAlg}^{sm}_A)^{op}\to (\operatorname{CAlg}^{sm}_B)^{op} \to \operatorname{dSt}_B^f,
\end{align*}
composing Yoneda embedding and scalar extensions. This functor admits a left Kan extension $\operatorname{dSt}^f_A \to \operatorname{dSt}^f_B$. Using this, we define derived formal stack over derived schemes.
\begin{dfn}
Let $X$ be a derived scheme over $k$. The $\infty$-category of derived formal stacks over $X$ is defined to be the limit
\begin{align*}
\operatorname{dSt}^f_X = \lim_{\operatorname{Spec}A \to X} \operatorname{dSt}^f_A.
\end{align*}
\end{dfn}
Derived formal stacks and dg-Lie algebras are related as follows. First we define the Chevalley-Eilenberg algebras of dg-Lie algebras.
\begin{dfn}
Let $A$ be a Noetherian ring over $k$, and let $M:=\bigoplus_{i=1}^r A^{\oplus n_i}[-i]$ be a free dg-$A$-module generated by positive elements. Consider a free dg-Lie algebra $L$ generated by $M$. The Chevalley-Eilenberg algebra of $L$ is given by $C_A(L) := A \oplus M^{\vee}[-1]$, the trivial square-zero extension of $A$ by $M^{\vee}[-1]$.
\end{dfn}
Using this, we obtain a functor $\operatorname{dgLie}^*_A \to \operatorname{dSt}_A^f$ which sends $L$ to $\operatorname{Map}(C_A(L),-)$. Since $\operatorname{dgLie}_A$ is the sifted completion of $\operatorname{dgLie}^*_A$ (cf. \cite[Proposition 1.2.2.]{Henn}), we have a functor 
\begin{align*}
\mathcal{F}_A:\operatorname{dgLie}_A \to \operatorname{dSt}_A^f
\end{align*}
as the left Kan extension. Moreover, the functor $\mathcal{F}_A$ also induces a functor
\begin{align*}
\mathcal{F}_X : \operatorname{dgLie}_X \to \operatorname{dSt}_X^f. 
\end{align*}
Through this functor, we can relate the derived category of derived formal stacks and the derived category of representations of corresponding dg-Lie algebras. To see this, we will first define the derived category of formal stacks.
\begin{dfn}
Let $A$ be a Noetherian ring, and let $Y$ be a derived formal stack over $A$. Then we define the $\infty$-categories
\begin{align*}
\D(Y) &= \lim_{\substack{\operatorname{Spec}B \to Y \\ B \in \operatorname{CAlg}_A^{sm}}} \D(B),\\
\operatorname{IndCoh}Y &= \lim_{\substack{\operatorname{Spec}B \to Y \\ B \in \operatorname{CAlg}_A^{sm}}} \operatorname{IndCoh}(B).
\end{align*}
\end{dfn}
\begin{rem}
Let $\mathfrak{g}$ be a dg-Lie algebra over $A$, and let $Y=\mathcal{F}_A(\mathfrak{g})$ be the corresponding derived formal stack. We can write $\mathfrak{g}$ as the following colimit
\begin{align*}
\mathfrak{g} = \operatorname*{colim}_{L \in K} L
\end{align*}
where $K$ is a sifted diagram in $\operatorname{dgLie_A^*}$. Then we have
\begin{align*}
\operatorname{IndCoh}Y = \lim_{L \in K} \operatorname{IndCoh}C_A(L).
\end{align*}
Note that for $L,L' \in K$ and $L \to L'$, the !-pullback functor induces a functor $\D^+(C_A(L)) \to \D^+(C_A(L'))$. Hence we also have
\begin{align*}
\D^+(Y) = \lim_{L \in K} \D^+(C_A(L)).
\end{align*}
If moreover $\mathfrak{g}$ is a graded dg-Lie algebra, we also have graded version of this
\begin{align*}
\operatorname{IndCoh}_{gr}Y &= \lim_{L \in K} \operatorname{IndCoh}_{gr}C_A(L),\\
\D^+_{gr}(Y)& = \lim_{L \in K} \D_{gr}^+(C_A(L)).
\end{align*}
since each $L$ can be taken as a graded dg-Lie algebra.
\end{rem}
\begin{dfn}
Let $X$ be a scheme of finite type over $k$, and let $Y$ be a derived formal stack over $X$. Then we define the $\infty$-categories
\begin{align*}
\D(Y) &= \lim_{U \in \operatorname{AffOp}(X)} \D(Y_U),\\
\operatorname{IndCoh}Y &= \lim_{U \in \operatorname{AffOp}(X)} \operatorname{IndCoh}(Y_U),
\end{align*}
where $Y_U$ denotes the pullback of $Y$ on $U$. 
\end{dfn}
Now, we will establish the relation between dg-Lie algebras and corresponding derived formal stacks. Let $A$ be a Noetherian ring over $k$, and let $M:=\bigoplus_{i=1}^r A^{\oplus n_i}[-i]$ be a free dg-$A$-module generated by positive elements. Consider a free dg-Lie algebra $L$ generated by $M$. Then, its universal enveloping algebra over $A$ is given by $U_A(L)=T_A(M)$, the tensor algebra of $M$ over $A$. In this situation, we have the following equivalence of categories:
\begin{prop}\label{prop:lie}
Let $A$ be a $k$-algebra of finite type, and let $L$ be a free dg-Lie algebra generated by $M:=\bigoplus_{i=1}^r A^{\oplus n_i}[-i]$. Then, there is an equivalence of $\infty$-categories
\begin{align*}
\D(U_A(L)) \cong \operatorname{IndCoh}C_A(L).
\end{align*}
Here, $\D(U_A(L))$ denotes the $\infty$-category of left dg-$U_A(L)$-modules. Moreover, this induces an equivalence
\begin{align*}
\D^+(U_A(L)) \cong \D^+(C_A(L)).
\end{align*}
If moreover $L$ is a graded dg-Lie algebra, we also have equivalences
\begin{align*}
\D_{gr}(U_A(L)) &\cong \operatorname{IndCoh}_{gr}C_A(L),\\
\D_{gr}^+(U_A(L)) &\cong \D_{gr}^+(C_A(L)).
\end{align*}
\end{prop}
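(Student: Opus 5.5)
We will deduce Proposition \ref{prop:lie} from the same Morita/Koszul argument used in the proof of Theorem \ref{thm:main}, now applied to the pair $(U_A(L),C_A(L))=(T_A(M),A\oplus M^{\vee}[-1])$. Here $C_A(L)$ plays the role of $\B$: it is connective, because $M$ sits in positive degrees and hence $M^{\vee}[-1]$ sits in non-positive degrees. It is also of finite type over $A$. The augmentation module $A$ is a coherent $C_A(L)$-module, so it lies in $\operatorname{Coh}C_A(L)$. By Lurie's theory of IndCoh over connective Noetherian dg-algebras, $\operatorname{Coh}C_A(L)$ is generated under finite colimits, shifts and retracts by $A$. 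This holds because every coherent module over the square-zero extension $A\oplus M^{\vee}[-1]$ has a finite filtration whose graded pieces are coherent $A$-modules, and $A$ is regular enough after base change. More precisely, coherent $A$-modules themselves are built from $A$ only up to perfect-complex issues; we therefore take as generator the image of $\operatorname{Coh}(A)$, or argue stalkwise as in the paper. Consequently $\operatorname{IndCoh}C_A(L)\simeq \D(\operatorname{REnd}_{C_A(L)}(A))$ via the functor $\M\mapsto \operatorname{RHom}(A,\M)$.

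The key computation is $\operatorname{REnd}_{C_A(L)}(A)\simeq T_A(M)=U_A(L)$ as $E_1$-algebras. The plan is to build an explicit K-projective resolution of $A$ over $C:=A\oplus V$, where $V:=M^{\vee}[-1]$. Take the bar-type (Koszul) complex $\K:=C\otimes_A T_A(V^{\vee}[-1])^{\vee}$ equipped with a Koszul differential, by analogy with $\K_{\B}$ of \S3.1. Since $V^{\vee}[-1]=M$ is free of finite rank in each degree, $T_A(M)$ is degreewise finite. The filtration by tensor length has successive quotients of the form $C\otimes_A(\text{free})$, so Lemma \ref{lem.spa} applies because $\K$ is bounded in the appropriate direction. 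The acyclicity argument mirrors Proposition \ref{qis1}: grade by tensor length, note that the square-zero multiplication makes each graded piece a cone of an identity, and conclude that the augmentation is a quasi-isomorphism. Then $\operatorname{Hom}_C(\K,A)\cong T_A(M)$ with zero Koszul contribution, and the Yoneda product identifies with concatenation in $T_A(M)$. This is the quadratic-dual statement for the trivial square-zero extension, for which the dual is the free (tensor) algebra.

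With this in hand, the functor $\D(U_A(L))\to \operatorname{IndCoh}C_A(L)$ is $N\mapsto \K^{\vee}\otimes_{U_A(L)}N$, in the spirit of $F$. It sends $U_A(L)$ to $A$ and is fully faithful on the compact generator by the computation above. It is therefore fully faithful, and essential surjectivity follows once $A$ compactly generates $\operatorname{IndCoh}C_A(L)$. The bounded-below statement follows as in Lemmas \ref{lem:bound} and \ref{lem:sur}. A spectral sequence along the tensor-length filtration shows that bounded-below modules go to bounded-below modules, since $M^{\vee}$ lies in negative degrees, and $\operatorname{IndCoh}^+C_A(L)\simeq\D^+(C_A(L))$. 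The graded versions are obtained by replacing every $\operatorname{Hom}$ with its graded counterpart, exactly as in the paper.

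The main obstacle is the generation claim, namely that $A$ (or $\operatorname{Coh}(A)$ pulled back) generates $\operatorname{IndCoh}C_A(L)$, when $A$ is merely of finite type and not regular. I would first try to prove it via the filtration argument plus the observation that everything is linear over $A$. The resulting equivalence would then be $A$-linear, identifying $\operatorname{Coh}C_A(L)$ with modules over $T_A(M)$ whose underlying $A$-complex is coherent. This would replace the clean ``perfect $\leftrightarrow$ coherent'' statement by a relative one over $A$.
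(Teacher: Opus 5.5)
Your strategy is the paper's. Your $\K=C_A(L)\otimes_A T_A(M)^{\vee}$ is its $T^{\vee}$, your functor $N\mapsto\K^{\vee}\otimes_{U_A(L)}N$ is its $F(N)=T\otimes_{U_A(L)}N$, and the computation $\operatorname{REnd}_{C_A(L)}(A)\simeq U_A(L)$ is the same. (Minor point: $\K$ is unbounded below. Lemma \ref{lem.spa} applies because the tensor-length filtration is cellularly split, with pieces $C_A(L)\otimes_A(T^i_AM)^{\vee}$ having zero differential, not because of boundedness.)

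The genuine gap is the one you flag. Essential surjectivity onto $\operatorname{IndCoh}C_A(L)$ needs $A$ to generate $\operatorname{Coh}C_A(L)$ as a thick subcategory. For non-regular $A$ this is false, so no repair is possible:
\begin{itemize}
\item The forgetful functor along the unit $A\to C_A(L)$ is exact and sends the $C_A(L)$-module $A$ to $A$. Hence it maps the thick closure of $A$ into $\operatorname{Perf}(A)$.
\item Suppose $A$ is singular at a maximal ideal $\mathfrak m$. Then $A/\mathfrak m$, viewed as a $C_A(L)$-module through the augmentation, lies in $\operatorname{Coh}C_A(L)$ but is not perfect over $A$.
\item So $A/\mathfrak m$ is not in the thick closure of $A$. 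Being compact in $\operatorname{IndCoh}C_A(L)$, it is not in the localizing subcategory generated by $A$ either.
\end{itemize}
Thus your functor is fully faithful, with essential image the ind-completion of the thick closure of $A$, but it is not an equivalence. For $M=0$ it is the usual embedding $\D(A)\to\operatorname{IndCoh}A$.

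Your justification ``regular enough after base change'' has no content. Your fallback of taking all of $\operatorname{Coh}(A)$ as generators changes the endomorphism algebra, so it no longer yields $\D(U_A(L))$. The paper does not supply this step either: after computing $\operatorname{REnd}_{C_A(L)}(T)\simeq U_A(L)$ it simply asserts $\operatorname{Perf}U_A(L)\simeq\operatorname{Coh}C_A(L)$. Your filtration argument (coherent $C_A(L)$-modules are finitely filtered by coherent $A$-modules) does give generation when $A$ is regular, since then coherent $A$-modules are perfect.

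What survives without any generation statement is the bounded-below equivalence. You should argue it so that it does not route through the $\operatorname{IndCoh}$ equivalence, as the paper in effect does:
\begin{itemize}
\item Full faithfulness of $\D(U_A(L))\to\operatorname{IndCoh}C_A(L)$ needs only that $A\in\operatorname{Coh}C_A(L)$ is compact with $\operatorname{REnd}(A)\simeq U_A(L)$.
\item The spectral sequence for $\operatorname{Tot}(M\otimes_A N\to N)$ (Lemma \ref{lem:bound2}) sends $\D^+(U_A(L))$ into $\operatorname{IndCoh}^+C_A(L)\simeq\D^+(C_A(L))$.
\item Essential surjectivity comes from the explicit Koszul inverse of Lemma \ref{lem:sur2}: $N\simeq T\otimes_A^K N\cong F(U_A(L)\otimes_A^K N)$ for bounded-below $N$.
\end{itemize}
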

To prove this, we first investigate resolutions of $A$ over $U_A(L)$ and $C_A(L)$. Consider an exact sequence of right dg-$U_A(L)$-modules
\begin{align*}
0 \to M \otimes_A U_A(L) \to U_A(L) \to A \to 0,
\end{align*}
where the first map $M \otimes_A U_A(L) \to U_A(L)$ is given by the left multiplication of $U_A(L) \cong T_A(M)$, and the second map is a natural surjection. Then, we define a right dg-$U_A(L)$-module $T$ as a total complex
\begin{align}
T:= \operatorname{Tot}(0 \to M \otimes_A U_A(L) \to U_A(L)\to 0),
\end{align}
where $U_A(L)$ is in the degree $0$ and $M \otimes_A U_A(L)$ is in the degree $-1$. More precisely, as a graded module,
\begin{align*}
T = A \otimes_A U_A(L) \oplus M[1]\otimes_A U_A(L) \cong (A \oplus M[1]) \otimes_A U_A(L),
\end{align*}
and for $a \in A$, $m \in M[1]$, and $t \in U_A(L)$, its differential is given by
\begin{align*}
d((a + m) \otimes t) = mt.
\end{align*}
Note that $T = (C_A(L))^{\vee} \otimes_A U_A(L)$ as a graded module and hence $T$ also has a structure of a dg-$C_A(L)$-module.
\begin{lem}\label{lem:reso}
The following assertions hold:
\begin{enumerate}
\item There is a quasi-isomorphism of dg-$U_A(L)$-modules $T \to A$.
\item The right dg-$U_A(L)$-module $T$ is K-projective.
\item There is a quasi-isomorphism of dg-$C_A(L)$-modules $A \to T^{\vee}$.
\item The dg-$C_A(L)$-module $T^{\vee}$ is K-projective.
\end{enumerate}
\end{lem}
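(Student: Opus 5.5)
Each of the four assertions is checked directly from the explicit form of $T$; the only structural input is that $U_A(L)\cong T_A(M)$ is a tensor algebra on a free module sitting in positive degrees.

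\textbf{(1) $T\to A$ is a quasi-isomorphism.} The map is the composite of the projection of $T$ onto its degree-$0$ column $U_A(L)$ with the augmentation $U_A(L)\to A$. Since $T$ is the total complex of the two-term complex $M\otimes_A U_A(L)\to U_A(L)$, it is quasi-isomorphic to the cone of the multiplication map. So it suffices that
\begin{align*}
0\to M\otimes_A U_A(L)\to U_A(L)\to A\to 0
\end{align*}
is a short exact sequence of dg-modules; the cone of an injective map is quasi-isomorphic to its cokernel. Exactness holds as graded modules because $T_A(M)=A\oplus M\otimes_A T_A(M)$. Left multiplication by $M$ commutes with the differential by the Leibniz rule, so the maps are dg-maps.

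\textbf{(2) $T$ is K-projective.} I would use Lemma \ref{lem.spa}. Filter $T$ by $F_0=U_A(L)$ (the column $A\otimes_A U_A(L)$) and $F_1=T$. $F_0$ is a dg-submodule, since the differential maps $M[1]\otimes U_A(L)$ into it and preserves it. Then $F_0\cong A\otimes U_A(L)$ and $F_1/F_0\cong M[1]\otimes_A U_A(L)$ with its induced differential. Because $M$ has zero differential (it is $\bigoplus_i A^{\oplus n_i}[-i]$), both quotients are free modules $U_A(L)\otimes_A K$ with $K$ free and zero differential. This gives a cellularly split filtration, so $T$ is K-projective. The base ring is $A$ rather than a field, but Lemma \ref{lem.spa} is stated over a general commutative ring $R$, so this is fine.

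\textbf{(3) $A\to T^{\vee}$ is a quasi-isomorphism.} Here $T^{\vee}=\operatorname{Hom}_A(T,A)$ is the graded dual. Since $T\cong C_A(L)^{\vee}\otimes_A U_A(L)$ and each graded piece is finite free, I expect $T^{\vee}\cong C_A(L)\otimes_A U_A(L)^{\vee}$, a Koszul-type complex analogous to $\K_{\B}$. The map $A\to T^{\vee}$ is the dual of $T\to A$. Each graded piece of $T$ is a finite free $A$-module, and $T$ is bounded in the appropriate direction: $M$ is in positive degrees, so $U_A(L)$ is nonnegatively graded with finite free pieces. Hence dualizing commutes with cohomology degreewise. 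Alternatively, $\operatorname{Hom}_A(-,A)$ preserves acyclicity of the cone of $T\to A$, because that cone is a complex of finite free modules bounded below with exact columns; a finite-rank Mittag-Leffler argument applies degreewise. So the dual map is a quasi-isomorphism.

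\textbf{(4) $T^{\vee}$ is K-projective over $C_A(L)$.} This is the main obstacle. $T^{\vee}$ is not bounded below in general, since $U_A(L)^{\vee}$ lives in nonpositive degrees, and $C_A(L)=A\oplus M^{\vee}[-1]$ sits in degrees $\le 0$. I would imitate the filtration argument for $\K_{\B}$ in Proposition \ref{prop:proj1}. Decompose
\begin{align*}
T^{\vee}=\bigoplus_{n\ge 0} C_A(L)\otimes_A (M^{\otimes n})^{\vee}
\end{align*}
as graded modules, and filter by tensor length: $F_n=\bigoplus_{j\le n}C_A(L)\otimes (M^{\otimes j})^{\vee}$. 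The differential dual to left multiplication lowers the tensor length by one, and $M$ has zero differential, so each $F_n$ is a dg-submodule. The quotients are $C_A(L)\otimes_A K_n$ with $K_n$ finite free with zero differential. This is a cellularly split filtration, and Lemma \ref{lem.spa} applies without any boundedness hypothesis. The point to verify carefully is that the internal differential of $C_A(L)$ together with the dual Koszul term really lands in lower filtration, and that no component raises tensor length.
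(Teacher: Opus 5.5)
Your arguments for (1), (2) and (4) are essentially the paper's. For (2) you use the same two-step filtration $U_A(L)\subset T$ with free quotient $M[1]\otimes_A U_A(L)$. For (4) you filter $T^{\vee}\cong C_A(L)\otimes_A U_A(L)^{\vee}$ by tensor length. Its quotients $C_A(L)\otimes_A (M^{\otimes n})^{\vee}$ have zero differential, so the cellular case of Lemma \ref{lem.spa} applies even though $T^{\vee}$ is unbounded below. The point you leave to be checked in (4) is what the paper verifies by computing $d(s\otimes\varphi)=(-1)^{|\varphi|}\sum_i s x_i^*\otimes x_i\varphi$. Since $C_A(L)$ and $M$ carry zero differential, this is the whole differential, and it lowers tensor length by one, as you expected.

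Step (3) is where you diverge, and your justification does not work as written. The claim that degreewise finite freeness plus boundedness below makes dualizing commute with cohomology is false. Already for $C=(\mathbb{Z}\xrightarrow{2}\mathbb{Z})$ in degrees $0,1$ one has $H^0(C^{\vee})\cong\mathbb{Z}/2$ while $H^0(C)^{\vee}=0$. Mittag-Leffler is also irrelevant, since each degree of $T^{\vee}$ involves only one degree of $T$.

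What you actually need is only that the dual of the quasi-isomorphism $T\to A$ is again a quasi-isomorphism. The paper gets this from K-projectivity of $T$ over $A$: $T\to A$ is then a quasi-isomorphism between K-projective dg-$A$-modules, hence a homotopy equivalence, and $\operatorname{Hom}_A(-,A)$ preserves homotopy equivalences. In your setup this is immediate. $U_A(L)$ has zero differential and $U_A(L)=A\oplus M\otimes_A U_A(L)$ as graded $A$-modules, so
\begin{align*}
T\cong A\oplus\operatorname{cone}(\mathrm{id}_{M\otimes_A U_A(L)})
\end{align*}
as complexes of $A$-modules.

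Alternatively, your ``exact columns'' remark becomes a correct argument once you say why dualizing preserves exactness. Each degreewise sequence $0\to(M\otimes_A U_A(L))^q\to U_A(L)^q\to A^q\to 0$ is a short exact sequence of finite free modules, hence split, so it stays exact after dualizing. The dual of the three-column double complex $M\otimes_A U_A(L)\to U_A(L)\to A$ therefore has exact rows and finitely many columns, so its total complex, which is a shift of the cone of $A\to T^{\vee}$, is acyclic. With this repair the proposal is complete.
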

\begin{proof}
(1) immediately follows from the construction. We will prove (2). Since $T \supset U_A(L)$ is a dg-submodule and $T/U_A(L) \cong M \otimes_A U_A(L)[1]$, it follows that $T$ is K-projective right dg-$U_A(L)$-module. For (3), since $T$ is K-projective as a dg-$A$-module, we obtain a quasi-isomorphism $A \cong A^{\vee} \to T^{\vee}.$

To prove (4), we first calculate the differential of $T^{\vee}$. As a graded $A$-module, we have
\begin{align*}
T \cong (C_A(L))^{\vee} \otimes_A U_A(L).
\end{align*}
Let $\{x_i\}$ be a basis of $M$ as a free dg-$A$-module. Then, for any $(a+m) \otimes t \in T \cong (C_A(L))^{\vee} \otimes U_A(L)$, where $a \in A$, $m \in M[1]$, and $t \in U_A(L)$, we have
\begin{align*}
d((a+m) \otimes t ) = 1 \otimes mt.
\end{align*}
Then, the differential of $T^{\vee} = C_A(L) \otimes_A (U_A(L))^{\vee}$ is given by
\begin{align*}
d(s \otimes \varphi) = (-1)^{|\varphi|} \sum_i sx_i^* \otimes x_i \varphi\ \ \ \cdots(\star)
\end{align*}
for a homogeneous element $s \otimes \varphi \in C_A(L) \otimes_A (U_A(L))^{\vee}$. Indeed, for any $(a+m) \otimes t \in T = (C_A(L))^{\vee} \otimes_A U_A(L)$ ,where $a \in A$, $m \in M$ and $t \in U_A(L)$, we have
\begin{align*}
&d_{T^{\vee}}(s \otimes \varphi)((a+m) \otimes t)\\
&=-(-1)^{|s|+|\varphi|} (s \otimes \varphi)(d_T((a+m) \otimes t))\\ 
&= -(-1)^{|s|+|\varphi|} (s \otimes \varphi) (1 \otimes mt)\\
&= -(-1)^{|s|+|\varphi|} s \otimes \varphi(mt)\\
&= -(-1)^{|s|+|\varphi|} s \otimes \varphi \left( \sum_i x_i^*(m)x_it \right)\\
&= -(-1)^{|s|+|\varphi|} \sum_i sx_i^*(m) \otimes (x_i\varphi)(t)\\
&= -(-1)^{|s|+|\varphi|} \left(\sum_i sx_i^* \otimes x_i\varphi \right)(m \otimes t).
\end{align*}
Here, if $s \in M^{\vee}$, then $sx_i^*=0$ and hence we obtain ($\star$).

Hence $T^{\vee} \cong C_A(L) \otimes_A (U_A(L))^{\vee}$ can be expressed as
\begin{align*}
0 \leftarrow C_A(L) \otimes_A (T^0_A(M))^{\vee} \leftarrow C_A(L) \otimes_A (T^1_A(M))^{\vee} \leftarrow \cdots \leftarrow C_A(L) \otimes_A (T^i_A(M))^{\vee}  \leftarrow \cdots
\end{align*}
where $T_A^i(M)$ denotes the $i$-th graded piece of $T_A(M)$. Thus, if we define $F^n:=\bigoplus_{i=0}^n C_A(L) \otimes_A (T^i_A(M))^{\vee}$, the filtration $\{F^n\}$ gives a cellularly split filtration of $T^{\vee}$. Hence $T^{\vee}$ is a K-projective dg-$C_A(L)$-module by Lemma \ref{lem.spa}.
\end{proof}
Consider a functor $F$ given by
\begin{align*}
F:\D(U_A(L)) \to \D(C_A(L)), N \mapsto T \otimes_{U_A(L)}N.
\end{align*}
By Lemma \ref{lem:reso}, $F$ is well-defined. We will show that this functor induces equivalences $\D(U_A(L)) \cong \operatorname{IndCoh}C_A(L)$ and $\D^+(U_A(L)) \cong \D^+(C_A(L))$. To see this, we prepare some lemmas. First, we will define Koszul differentials in this situation. For a right dg-$U_A(L)$-module $N$ and a dg-$C_A(L)$-module $P$, we define a dg-$A$-module $N \otimes_A^K P$ as follows. First, as a graded module, $N \otimes_A^KP:= N \otimes_AP$. Its differential is given by
\begin{align*}
d_{N \otimes_A^KP} := d_{N \otimes_A P} + d',
\end{align*}
where $d'$ is the Koszul differential defined by
\begin{align*}
d'(n \otimes p) := -(-1)^{|n|}\sum_i nx_i \otimes x_i^*p,
\end{align*}
for a homogeneous element $n \otimes p \in N \otimes_A P$.
\begin{lem}\label{lem:sur2}
Let $N$ be a dg-$C_A(L)$-module which is bounded below. Then we have a quasi-isomorphism of dg-$C_A(L)$-modules
\begin{align*}
N \overset{\sim}\to T \otimes_A^K N,
\end{align*}
where $C_A(L)$-action on $T \otimes_A^K N$ is given by the one on $T$.
\end{lem}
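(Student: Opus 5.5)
The plan is to copy the proof of Lemma \ref{lem:sur}, with $T$ in place of $\K_{\A}$. As a graded $A$-module,
\begin{align*}
T \otimes_A^K N = (A \oplus M[1]) \otimes_A U_A(L) \otimes_A N = \bigoplus_{i \geq 0} (A\oplus M[1]) \otimes_A T^i_A(M) \otimes_A N .
\end{align*}
The first step is to define the map $N \to T \otimes_A^K N$ by $n \mapsto 1 \otimes 1 \otimes n$, placed in the summand $i=0$ with the component $A$ of $A \oplus M[1]$. Then I check that this is a map of dg-$C_A(L)$-modules. The Koszul differential $d'$ applied to $1\otimes n$ gives $\sum_i x_i \otimes x_i^* n$. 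This should cancel against the internal differential of $T$, which sends the component $M[1]\otimes 1$ to $M \cdot 1 \subset U_A(L)$. If the signs do not work out as stated, I would instead take the unit map of the adjunction between $T\otimes_{U_A(L)}-$ and the corresponding Hom functor, exactly as $\varphi$ was defined in Lemma \ref{lem:sur}. Compatibility with the $C_A(L)$-action is immediate, since $M^{\vee}[-1]$ acts on $T$ and the $A$-part of the map is the unit.

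Next I construct a retraction $\psi : T\otimes_A^K N \to N$. It is the projection onto the summand $A \otimes T^0_A(M) \otimes_A N = N$, viewed as a map of complexes of $A$-modules. Then $\psi\circ\varphi = \mathrm{id}_N$, so it suffices to show that $\psi$ is a quasi-isomorphism. For this I filter by the tensor degree $i$ in $U_A(L) = T_A(M)$. The total complex comes from a double complex whose columns are
\begin{align*}
C_i := (A \oplus M[1])\otimes_A T^i_A(M) \otimes_A N ,
\end{align*}
carrying the ``vertical'' differentials of $T$ and $N$. The differential of $T$ maps $M[1]\otimes T^{i}$ to $T^{i+1}$, and $d'$ raises $i$ by one, since it multiplies $T$ on the right by $x_i$. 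So I should group terms so that the differential of $T$ lies inside one column. Concretely, set
\begin{align*}
G_j := T^j_A(M) \oplus M[1]\otimes_A T^{j-1}_A(M).
\end{align*}
Then the differential of $T$ preserves $G_j$, and $d'$ moves $G_j$ to $G_{j+1}$. This gives a double complex $G_0 \otimes N \to G_1\otimes N \to \cdots$ with horizontal maps $d'$.

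For every $j\ge1$, $G_j$ is the cone of the isomorphism $M\otimes T^{j-1} \to T^j$ given by multiplication. Hence $G_j$ is contractible, and $G_j \otimes_A N$ is acyclic, because everything is free over $A$. On the other hand, $G_0 = A$, so $G_0 \otimes_A N = N$. Since $N$ is bounded below and $M$ sits in positive degrees, each antidiagonal of the double complex is finite. So the spectral sequence of this double complex converges, as in Lemma \ref{lem:sur} (cf. \cite[\S 5.6]{Wei}), and its $E_1$ page collapses to $H(N)$ in column $0$. The resulting isomorphism is induced by the morphism of double complexes onto column $0$, which is exactly $\psi$. Therefore $\psi$ is a quasi-isomorphism, and hence so is $\varphi$.

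The main obstacle is the convergence and boundedness check. The horizontal direction is unbounded, so I need that $G_j\otimes N$ is concentrated in degrees $\geq j + \inf N$. This holds because the generators of $M$ have degree at least $1$, so the elements of $T^j$ have degree at least $j$, while $M[1]$ shifts the degree down by only one. Making this precise, and choosing the filtration so that the spectral sequence is that of a bounded-below, exhaustive filtration, is the step I expect to need the most care. The sign check in the first step is routine by comparison.
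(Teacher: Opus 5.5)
Your overall route is the paper's: take $\varphi$ to be the adjunction unit, retract onto column $0$ via $\psi$, and show $\psi$ is a quasi-isomorphism by the internal-degree spectral sequence of Lemma \ref{lem:sur}. However, your first step fails, and not because of signs.

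\textbf{The candidate $n\mapsto 1\otimes 1\otimes n$ is not a chain map.} Since $d_T(1\otimes 1)=0$, you get
\begin{align*}
d(1\otimes 1\otimes n)=1\otimes 1\otimes d_N n-\sum_i 1\otimes x_i\otimes x_i^*n .
\end{align*}
The cancellation you describe needs a component of the map in $M[1]\otimes_A T^0_A(M)\otimes_A N$, and this map has none.

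\textbf{It is also not $C_A(L)$-linear.} Because $C_A(L)$ is square-zero, any $\xi\in M^{\vee}[-1]$ annihilates the summand $A\subset C_A(L)^{\vee}$. So $\xi\cdot(1\otimes 1\otimes n)=0$, while the image of $\xi n$ is $1\otimes 1\otimes \xi n$, which is nonzero in general.

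\textbf{The fallback is therefore mandatory.} Written out, the adjunction $\operatorname{Hom}_{C_A(L)}(N,C_A(L)^{\vee}\otimes_A U_A(L)\otimes_A N)\cong\operatorname{Hom}_{U_A(L)}(U_A(L)\otimes_A N,U_A(L)\otimes_A N)$ sends the identity to
\begin{align*}
\varphi(n)=1\otimes 1\otimes n+\sum_i \pm\, x_i\otimes 1\otimes x_i^*n ,
\end{align*}
with $x_i\in M[1]$ in the first factor. This is the coevaluation element of $C_A(L)^{\vee}\otimes_A C_A(L)$ applied to $n$. It is also the paper's map: its displayed sum only works if read over a basis of $C_A(L)^{\vee}$ that contains $1$, which is how the paper's computation of $d'(\varphi(n))$ produces $\sum_i 1\otimes x_i\otimes x_i^*n$. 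With the correction term the required identities hold:
\begin{itemize}
\item the $d_T$-image of the correction term cancels the Koszul term above;
\item the Koszul differential of the correction term vanishes, because $x_j^*x_i^*=0$;
\item $C_A(L)$-linearity holds, because $\xi\cdot x_i=\pm\xi(x_i)\in A$.
\end{itemize}

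Since $\psi$ kills $M[1]\otimes_A T^0_A(M)\otimes_A N$, you still have $\psi\circ\varphi=\mathrm{id}_N$. The rest of your argument is correct and is exactly the ``similar argument as in Lemma \ref{lem:sur}'' that the paper invokes. Your $G_j$ is the internal-degree-$j$ part of $T$ (giving $M$ internal degree $1$), playing the role of $\K_{\A,p}$. Your observation that $G_j$ is the cone of the multiplication isomorphism $M\otimes_A T^{j-1}_A(M)\to T^j_A(M)$ makes it contractible, so $G_j\otimes_A N$ is acyclic with no flatness needed. This replaces the appeal to the graded acyclicity of $\K_{\A,p}$ used there.

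For convergence, the sharp bound is that $G_j\otimes_A N$ lives in degrees $\geq j-1+\inf N$, since $M[1]$ sits in degrees $\geq 0$. That bound is enough, because only finitely many columns meet each total degree.
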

\begin{proof}
First, we have an isomorphism
\begin{align*}
\operatorname{Hom}_{C_A(L)}(N, (C_A(L))^{\vee} \otimes U_A(L) \otimes_AN) &\cong \operatorname{Hom}_{U_A(L)}(U_A(L)\otimes_AN, U_A(L) \otimes_AN).
\end{align*}
Take a morphism $\varphi: N \to (C_A(L))^{\vee} \otimes U_A(L) \otimes_AN$ corresponding to $\operatorname{id}_{U_A(L) \otimes_A N}$. We will show that this gives a quasi-isomorphism $N \overset{\sim}\to T \otimes_A^K N$. Let us check that $\varphi$ commutes with differentials. To see this, it is enought to show that $(d_T+d') \circ \varphi=0$, where $d'$ is the Koszul differential of $T \otimes_A^KN$. First, $\varphi$ is given by
\begin{align*}
\varphi(n) = \sum_i (-1)^{|x_i||n|} x_i \otimes 1 \otimes x_i^*n
\end{align*}
for a homogeneous element $n \in N$. Then,
\begin{align*}
d_T (\varphi(n))= \sum_i (-1)^{|x_i||n|} 1 \otimes x_i \otimes x_i^*n.
\end{align*}
On the other hand, since $C_A(L) = A \oplus M^{\vee}[-1]$ is a square-zero extension, we have
\begin{align*}
d' (\varphi(n))&= -\sum_{i,j} (-1)^{|x_i||n|} x_i \otimes x_j \otimes x_j^*x_i^*n \\
&= -\sum_i (-1)^{|x_i||n|} 1 \otimes x_i \otimes x_i^*n 
\end{align*}
Hence we have $(d_T+d') \circ \varphi=0$, and thus $\varphi$ gives a homomorphism of dg-$C_A(L)$-modules $N \to T \otimes_A^K N$.

We will show that $\varphi$ is a quasi-isomorphism. Consider a homomorphism of dg-$A$-modules
\begin{align*}
\psi : T \otimes_A^K N \to N, f \otimes t \otimes n \mapsto f(1)tn,
\end{align*}
where $f \in (C_A(L))^{\vee}$, $t \in U_A(L)$, and $n \in N$. Then $\psi \circ \varphi = \operatorname{id}_N$ and hence it suffices to show that $\psi$ is a quasi-isomorphism, which can be proven by the similar argument as in the proof of Lemma \ref{lem:sur}.
\end{proof}

\begin{lem}\label{lem:bound2}
The functor $F:\D(U_A(L)) \to \D(C_A(L))$ preserves bounded-below complexes. Namely, for any $N \in \D^+(U_A(L))$, we have $F(N) \in \D^+(C_A(L))$.
\end{lem}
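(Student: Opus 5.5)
We follow the proof of Lemma \ref{lem:bound}. Since $T$ is K-projective as a right dg-$U_A(L)$-module (Lemma \ref{lem:reso}(2)), the functor $F$ needs no further resolution. Let $N\in \D^+(U_A(L))$. As a graded $A$-module,
\begin{align*}
F(N)=T\otimes_{U_A(L)}N \cong \big((C_A(L))^{\vee}\otimes_A U_A(L)\big)\otimes_{U_A(L)} N \cong (C_A(L))^{\vee}\otimes_A N = (A\oplus M[1])\otimes_A N .
\end{align*}
Tracing the differential $d((a+m)\otimes t)=mt$ of $T$ through this identification, we expect $F(N)$ to be the total complex of the two-term double complex
\begin{align*}
0\to M\otimes_A N \to N \to 0,
\end{align*}
with $N$ placed in column $0$ and $M\otimes_A N$ in column $-1$. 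The horizontal map is the action $m\otimes n\mapsto mn$ of $M\subset U_A(L)$ on $N$. The vertical differentials are the usual differentials of $N$ and of $M\otimes_A N$. So, as a complex of $A$-modules, $F(N)$ is the cone of the action map $M\otimes_A N\to N$. The $C_A(L)$-action is irrelevant for boundedness.

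From this description we get a distinguished triangle of complexes of $A$-modules
\begin{align*}
M\otimes_A N \to N \to F(N) \to (M\otimes_A N)[1].
\end{align*}
Here is the argument for boundedness. Choose $n_0$ with $H^q(N)=0$ for $q<n_0$. Since $M=\bigoplus_{i=1}^r A^{\oplus n_i}[-i]$ is a finite free $A$-module, we have $M\otimes_A N\cong\bigoplus_i N[-i]^{\oplus n_i}$. Its cohomology therefore vanishes in degrees $q<n_0+1$, and that of $(M\otimes_A N)[1]$ vanishes in degrees $q<n_0$. The long exact sequence then gives $H^q(F(N))=0$ for $q<n_0$. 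Hence $F(N)\in\D^+(C_A(L))$.

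Alternatively, one can phrase this with the spectral sequence of the two-column double complex, as in Lemma \ref{lem:bound}. That spectral sequence converges trivially because there are only two columns.

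The main obstacle is checking the identification of $T\otimes_{U_A(L)}N$ with the cone. One must verify that the differential induced from $T$ corresponds exactly to $\pm$ the action map, with the correct signs. One must also verify that replacing $N$ by a quasi-isomorphic bounded-below representative is harmless. Both follow from the K-projectivity of $T$ together with the explicit description of $d_T$. I would first check the identification on $N=U_A(L)$, where $F(U_A(L))=T$, and then extend it by naturality in $N$. Note that, as the Remark after Lemma \ref{lem:bound} warns for the analogous situation, mere degreewise boundedness of the underlying module is not enough in general, since $U_A(L)$ is unbounded above. The triangle argument avoids this issue because it only uses the cohomology of $N$.
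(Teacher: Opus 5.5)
Your proposal is correct and follows the paper's proof. The paper also identifies $F(N)=T\otimes_{U_A(L)}N$ with the total complex of the two-column double complex $0\to M\otimes_A N\to N\to 0$, i.e.\ the cone of the action map, and then invokes the spectral sequence argument of Lemma \ref{lem:bound}; your long exact sequence for the cone is exactly the two-column case of that argument, and your degree count ($M$ concentrated in degrees $1,\dots,r$, hence $H^q(F(N))=0$ for $q<n_0$) is what it needs. Since the cone description holds for arbitrary $N$ and only the cohomology of $N$ enters, you do not need to replace $N$ by a bounded-below representative or check the case $N=U_A(L)$ first.
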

\begin{proof}
Let $N \in \D^+(U_A(L))$. Then, as a graded module, $F(N) \cong (A \oplus M[-1]) \otimes_A N$, and its differential is given by
\begin{align*}
d((a+m)\otimes n) = mn,
\end{align*} 
where $a\in A$, $m \in M[1]$, and $n \in N$. Here, note that $m \in M \subset T_A(M) \cong U_A(L)$ acts on $N$. From this, $F(M)$ can be written as a total complex of the following double complex
\begin{align*}
0 \to M \otimes_A N \to N \to 0.
\end{align*}
Then, by the spectral sequence argument as in the proof of Lemma \ref{lem:bound}, we obtain the claim.
\end{proof}

Now, we are ready to prove Proposition \ref{prop:lie}.
\begin{proof}
We will prove only the ungraded case. The graded case can be proven similarly. Consider a functor
\begin{align*}
F:D(U_A(L)) \to D(C_A(L)), M \mapsto T \otimes_{U_A(L)} M.
\end{align*}
This functor induces a homomorphism
\begin{align*}
U_A(L) \cong \operatorname{End}_{U_A(L)}(U_A(L)) \to \operatorname{REnd}_{C_{A}(L)}(T),
\end{align*}
and we have
\begin{align*}
\operatorname{REnd}_{C_{A}(L)}(T) &\cong \operatorname{REnd}_{C_{A}(L)}(A) \\
&\cong \operatorname{Hom}_{C_A(L)} (T^{\vee},A)\\
&\cong \operatorname{Hom}_{C_A(L)} (C_A(L) \otimes_A (U_A(L))^{\vee},A)\\
&\cong \operatorname{Hom}_{A} ((U_A(L))^{\vee},A)\\
& \cong U_A(L).
\end{align*}
Hence the above homomorphism is a quasi-isomorphism. Thus we have an equivalence of categories
\begin{align*}
\operatorname{Perf}U_A(L) \overset{\sim}{\to} \operatorname{Coh} C_A(L).
\end{align*}
Taking ind-completions, we obtain
\begin{align*}
\D(U_A(L)) \overset{\sim}{\to} \operatorname{IndCoh} C_A(L).
\end{align*}
Restricting this functor to the bounded-below subcategory, we obtain a fully-faithful functor
\begin{align*}
\D^+(U_A(L)) \to \operatorname{IndCoh}^+ C_A(L) \cong \D^+(C_A(L)),
\end{align*}
since $F$ preserves bounded-below complexes by lemma \ref{lem:bound2}.

Then, to prove that $F$ gives an equivalence, it suffices to show that $F$ is essentially surjective. Let $N \in D^+(C_A(L))$. Then we have
\begin{align*}
F(U_A(L) \otimes_A^K N) &\cong T \otimes_{U_A(L)}(U_A(L) \otimes_A^K N)\\
& \cong T \otimes^K_A N.
\end{align*}
Since we have a quasi-isomorphism of dg-$C_A(L)$-modules $N \overset{\sim}\to T \otimes_A^K N$ by Lemma \ref{lem:sur2}, we obtain $F(U_A(L) \otimes_A^K N) \cong N$. This shows that $F:\D^+(U_A(L)) \to \D^+(C_A(L))$ is essentially surjective.
\end{proof}
Now, we give a relation of derived categories between dg-Lie algebras and corresponding derived formal stacks.
\begin{thm}\label{thm:afflie}
Let $X$ be an affine scheme of finite type over $k$, and let $\mathfrak{g}$ be a dg-Lie algebra over $X$. Let $Y=\mathcal{F}_X(\mathfrak{g})$ be a derived formal stack correspoinding to the dg-Lie algebra $\mathfrak{g}$. Then we have the following equivalences of categories:
\begin{align*}
\D(U_A(\mathfrak{g})) &\cong \operatorname{IndCoh}(Y), \\
\D^+(U_A(\mathfrak{g})) &\cong \D^+(Y).
\end{align*}
Moreover, if $\mathfrak{g}$ is a graded dg-Lie algebra, we also have
\begin{align*}
\D_{gr}(U_A(\mathfrak{g})) &\cong \operatorname{IndCoh}_{gr}(Y), \\
\D^+_{gr}(U_A(\mathfrak{g})) &\cong \D^+_{gr}(Y).
\end{align*}
\end{thm}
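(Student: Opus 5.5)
The plan is to reduce to Proposition \ref{prop:lie} by writing $\mathfrak{g}$ as a sifted colimit of free dg-Lie algebras and then passing to limits on both sides. Write $X=\operatorname{Spec}A$. Since $\operatorname{dgLie}_A$ is the sifted completion of $\operatorname{dgLie}^*_A$, we can write
\begin{align*}
\mathfrak{g} \simeq \operatorname*{colim}_{L \in K} L
\end{align*}
for a sifted diagram $K$ in $\operatorname{dgLie}^*_A$. By the remark preceding Proposition \ref{prop:lie}, for $Y=\mathcal{F}_A(\mathfrak{g})$ we have
\begin{align*}
\operatorname{IndCoh}Y \simeq \lim_{L\in K}\operatorname{IndCoh}C_A(L), \qquad \D^+(Y)\simeq \lim_{L\in K}\D^+(C_A(L)).
\end{align*}
These limits are taken along $!$-pullback functors. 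Proposition \ref{prop:lie} supplies equivalences $F_L:\D(U_A(L))\xrightarrow{\sim}\operatorname{IndCoh}C_A(L)$ and $F_L:\D^+(U_A(L))\xrightarrow{\sim}\D^+(C_A(L))$ for each $L$.

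The first key step is to show that the $F_L$ are compatible with the transition maps. For a morphism $L\to L'$ in $K$, the functor $\D(U_A(L))\to\D(U_A(L'))$ should be induction $U_A(L')\otimes_{U_A(L)}-$. Because this preserves colimits, the system of categories $\D(U_A(L))$ can also be viewed as a limit along the right adjoints, namely restriction. I would check that, under $F$, restriction $\D(U_A(L'))\to\D(U_A(L))$ corresponds to $!$-pullback along $C_A(L')\to C_A(L)$. To do this, compare the kernels $T_L$ and $T_{L'}$: both are Koszul-type resolutions of $A$, so $\operatorname{REnd}_{C_A(L)}(A)\simeq U_A(L)$ is functorial in $L$. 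I would make the comparison cleanly using the Morita-type adjunction $\theta\dashv E$ from \cite[Theorem 4.8.5.11]{Lur}, exactly as in the proof of Theorem \ref{thm:loop}. The pairs $(\operatorname{IndCoh}C_A(L),A)$ form a diagram in $(\operatorname{Pr}^L_k)_{\D(k)/}$. Applying $E$ gives $\operatorname{REnd}(A)\simeq U_A(L)$, and the counit gives a natural transformation of diagrams $\theta(U_A(L))\to(\operatorname{IndCoh}C_A(L),A)$ that is a pointwise equivalence. Its limit is therefore an equivalence, and we obtain
\begin{align*}
\lim_{L\in K}\D(U_A(L))\xrightarrow{\sim}\operatorname{IndCoh}Y.
\end{align*}

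The second step identifies $\lim_{L\in K}\D(U_A(L))$ with $\D(U_A(\mathfrak{g}))$. The universal enveloping algebra $U_A$ is a left adjoint from dg-Lie algebras to associative algebras, so $U_A(\mathfrak{g})\simeq\operatorname{colim}_K U_A(L)$. The functor $\D(-)$, sending an algebra to its module category with induction functors, preserves colimits in $\operatorname{Pr}^L$. A colimit in $\operatorname{Pr}^L$ is computed as the limit of the right adjoints. Hence $\D(U_A(\mathfrak{g}))\simeq\lim_K\D(U_A(L))$ along restrictions, and this gives the first equivalence.

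For the bounded-below statement, I would restrict the equivalence to bounded-below objects on both sides. Lemma \ref{lem:bound2} and Lemma \ref{lem:sur2} show that each $F_L$ matches $\D^+(U_A(L))$ with $\D^+(C_A(L))\simeq\operatorname{IndCoh}^+C_A(L)$. The remaining point is that a $U_A(\mathfrak{g})$-module is bounded below iff its restriction to each $U_A(L)$ is, which I would check through the underlying $A$-module, since restriction does not change it. The graded versions follow verbatim, because each $L$ in $K$ can be taken graded and all constructions respect the internal grading.

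I expect the main obstacle to be the compatibility of the transition functors, together with the commutation of $\D(-)$ and $\operatorname{IndCoh}$ with the sifted colimit and limit. Writing $\mathfrak{g}$ as a colimit in $\operatorname{dgLie}^*_A$ is only a homotopy-colimit statement, and the identification of restriction with $!$-pullback is where the argument is least formal. I would handle it with the $\theta\dashv E$ formalism rather than explicit kernels.
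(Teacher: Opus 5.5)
Your proposal follows the paper's proof. You write $\mathfrak{g}$ as a sifted colimit over $K\subset\operatorname{dgLie}^*_A$, identify $\D(U_A(\mathfrak{g}))$ and $\operatorname{IndCoh}Y$ with the limits of $\D(U_A(L))$ and $\operatorname{IndCoh}C_A(L)$, glue the equivalences of Proposition \ref{prop:lie} through the counit of $\theta\dashv E$, and then restrict to bounded-below objects. Your extra remarks only make explicit what the paper leaves implicit: the colimit in $\operatorname{Pr}^L$ is computed by the limit of right adjoints, $\operatorname{REnd}_{C_A(L)}(A)\simeq U_A(L)$ must be natural in $L$, and boundedness can be checked on underlying $A$-modules.

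One caveat applies equally to you and to the paper. Proposition \ref{prop:lie} asserts $\operatorname{Perf}U_A(L)\simeq\operatorname{Coh}C_A(L)$, i.e.\ that $A$ generates $\operatorname{Coh}C_A(L)$ as a thick subcategory, and this fails when $A$ is not regular. Restriction along $A\to C_A(L)$ sends the thick closure of $A$ into $\operatorname{Perf}A$, while a non-perfect coherent $A$-module is still in $\operatorname{Coh}C_A(L)$. Already $\mathfrak{g}=0$ would give $\D(A)\simeq\operatorname{IndCoh}(A)$. So for singular $A$ the $\operatorname{IndCoh}$ equivalences are false, and the bounded-below ones cannot be obtained just by restricting a global equivalence.
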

\begin{proof}
We only prove the ungraded case. First we write $\mathfrak{g}$ as the following colimit
\begin{align*}
\mathfrak{g} \cong \operatorname*{colim}_{L \in K} L,
\end{align*}
where $K$ is a sifted diagram in $\operatorname{dgLie}_A^*$. Then we have
\begin{align*}
\D(U_A(\mathfrak{g})) \cong \D(U_A(\operatorname*{colim}_{L\in K} L))\cong \lim_{L \in K} \D(U_A(L)).
\end{align*}
Here, for $L,L' \in K$ and $L \to L'$, the functor $\D(U_A(L)) \to \D(U_A(L'))$ is given by restriction of scalars. On the other hand, we have
\begin{align*}
\operatorname{IndCoh}Y \cong \lim_{L \in K} \operatorname{IndCoh}C_A(L),
\end{align*}
where for $L,L' \in K$ and $L \to L'$, $\operatorname{IndCoh}C_A(L) \to \operatorname{IndCoh}C_A(L')$ is given by !-pullback functor.
We will construct a functor $\lim_{L \in K} D(U_A(L)) \to \lim_{L \in K} \operatorname{IndCoh}C_A(L)$ by gluing the equivalences constructed in the proof of Proposition \ref{prop:lie}. Consider the following adjunction
\begin{align*}
\theta : \operatorname{Alg}_k 
\mathrel{\substack{
  \longrightarrow\\[-0.7ex]
  \perp\\[-0.7ex]
  \longleftarrow
}}
(\operatorname{Pr}^L_k)_{\D(k)/} : E
\end{align*}
used in the proof of Theorem \ref{thm:loop}. Consider a diagram in $(\operatorname{Pr}^L_k)_{\D(k)/}$ given by
\begin{align*}
K \ni L \longmapsto (\operatorname{IndCoh}C_A(L),A) \in (\operatorname{Pr}^L_k)_{\D(k)/},
\end{align*}
with pushforward functors, the left adjoints of !-pullback functors.

Now, we have $E(\operatorname{IndCoh}C_A(L),A) \cong E(\operatorname{IndCoh}C_A(L),T) \cong \operatorname{REnd}_{C_A(K)}(T) \cong U_A(L)$ and hence we obtain a colimit-preserving functor $\D(U_A(L)) \to \operatorname{IndCoh}C_A(L)$ which sends $U_A(L)$ to $T$. This functor coincides with the one in Proposition \ref{prop:lie}, and thus it induces an equivalence $\D(U_A(L)) \cong \operatorname{IndCoh}C_A(L)$.

Now, the counit map induces a natural transformation between diagrams in $(\operatorname{Pr}^L_k)_{\D(k)/}$
\[
\begin{tikzcd}
K \arrow[r, bend left=50, "\D(U_A(L))"{name=F,above}] \arrow[r, bend right=50, "\operatorname{IndCoh}C_A(L)" {name=G,below}] \arrow[shorten <=10pt,shorten >=10pt,Rightarrow, from=F, to=G, ""] &\ \ \  (\operatorname{Pr}^L_k)_{\D(k)/}.
\end{tikzcd}
\]
Then, taking right adjoint of this, we obtain a functor
\begin{align*}
\D(U_A(\mathfrak{g})) \cong \lim_{L\in K} \D(U_A(L)) \longrightarrow \lim_{L\in K} \operatorname{IndCoh}(C_A(L)) \cong \operatorname{IndCoh}Y,
\end{align*}
which is an equivalence. Restricting to the bounded-below subcategories, we also have
\begin{align*}
\D^+(U_A(\mathfrak{g})) \cong \D^+(Y).
\end{align*}
\end{proof}
Gluing these equivalences, we obtain the following result.
\begin{thm}\label{thm:main2}
Let $X$ be a separated scheme of finite type over $k$, and let $\mathfrak{g}$ be a dg-Lie algebra over $X$. Let $Y=\mathcal{F}_X(\mathfrak{g})$ be a derived formal stack correspoinding to the dg-Lie algebra $\mathfrak{g}$. Then we have the following equivalences of categories:
\begin{align*}
\D(U_X(\mathfrak{g})) &\cong \operatorname{IndCoh}(Y), \\
\D^+(U_X(\mathfrak{g})) &\cong \D^+(Y).
\end{align*}
Moreover, if $\mathfrak{g}$ is a graded dg-Lie algebra, we also have
\begin{align*}
\D_{gr}(U_X(\mathfrak{g})) &\cong \operatorname{IndCoh}_{gr}(Y),\\
\D^+_{gr}(U_X(\mathfrak{g})) &\cong \D^+_{gr}(Y).
\end{align*}
\end{thm}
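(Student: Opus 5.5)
The plan is to glue the affine equivalences of Theorem \ref{thm:afflie} over the affine open subsets of $X$, in the same way the local equivalences of Theorem \ref{thm:main} were glued in the proof of Theorem \ref{thm:loop}.

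By definition we have
\begin{align*}
\operatorname{IndCoh}Y = \lim_{U \in \operatorname{AffOp}(X)} \operatorname{IndCoh}(Y_U), \qquad \D(Y) = \lim_{U \in \operatorname{AffOp}(X)} \D(Y_U).
\end{align*}
Moreover $Y_U = \mathcal{F}_U(\mathfrak{g}|_U)$, since $\mathcal{F}_X$ is built from the $\mathcal{F}_A$ through the limit defining $\operatorname{dSt}^f_X$. On the algebra side, $U_X(\mathfrak{g})$ is a quasi-coherent sheaf of dg-algebras with $U_X(\mathfrak{g})|_U = U_A(\mathfrak{g}|_U)$ for $U=\operatorname{Spec}A$. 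Hence, as in the proof of Theorem \ref{thm:main}, Zariski descent gives
\begin{align*}
\D(U_X(\mathfrak{g})) \cong \lim_{U} \D(U_A(\mathfrak{g}|_U)).
\end{align*}

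For each affine $U$, Theorem \ref{thm:afflie} gives an equivalence
\begin{align*}
\Phi_U : \D(U_A(\mathfrak{g}|_U)) \to \operatorname{IndCoh}(Y_U).
\end{align*}
The key step is to show that the $\Phi_U$ form a natural transformation of $U$-indexed diagrams. To get coherences for free rather than checking them by hand, I would reuse the adjunction $\theta \dashv E$ between $\operatorname{Alg}_k$ and $(\operatorname{Pr}^L_k)_{\D(k)/}$. Consider the diagram
\begin{align*}
U \mapsto (\operatorname{IndCoh}(Y_U), \mathcal{O}_X|_U),
\end{align*}
with transition maps the left adjoints of restriction. Its endomorphism algebras are $E(\operatorname{IndCoh}(Y_U),\mathcal{O}_U) \simeq U_A(\mathfrak{g}|_U)$; this holds for the $L\in K$ by the computation $\operatorname{REnd}_{C_A(L)}(T)\simeq U_A(L)$ in the proof of Proposition \ref{prop:lie}, and passes to colimits. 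The counit of the adjunction then supplies a natural transformation $\D(U_A(\mathfrak{g}|_U)) \to \operatorname{IndCoh}(Y_U)$ whose components agree with the $\Phi_U$, since both are colimit-preserving and send the unit object to the same generator. Passing to right adjoints and taking limits yields
\begin{align*}
\D(U_X(\mathfrak{g})) \to \operatorname{IndCoh}(Y),
\end{align*}
which is an equivalence because it is componentwise an equivalence.

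For the bounded-below statement, each $\Phi_U$ restricts to $\D^+(U_A(\mathfrak{g}|_U)) \simeq \D^+(Y_U)$. These restrictions are compatible with the transition functors, because restriction to smaller affines is t-exact. A section of the limit is bounded below exactly when it is so on each affine, and finitely many affines suffice since $X$ is quasi-compact; so the limit of the $\D^+$'s is $\D^+(U_X(\mathfrak{g}))$, respectively $\D^+(Y)$. The graded statements are handled verbatim, using the graded versions in Theorem \ref{thm:afflie}.

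The main obstacle is the compatibility step. One must identify the transition functors on the Lie side (pullback of $U_X(\mathfrak{g})$-modules along $V\subset U$) with those on the stack side (pullback of $\operatorname{IndCoh}$ along $Y_V\to Y_U$) under the $\Phi_U$. I expect this to reduce, via the sifted presentation $\mathfrak{g}|_U=\operatorname{colim}_K L$, to the affine statement that base change $A\to A'$ carries $T_A(L)$ to $T_{A'}(L\otimes_A A')$. This is clear from the explicit formula for $T$, and separatedness ensures that intersections of affines are affine.
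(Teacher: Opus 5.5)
Your proposal is essentially the paper's proof: glue the affine equivalences of Theorem \ref{thm:afflie} via the counit of $\theta \dashv E$. The paper takes as pointed object $P_U := F_U(U_X(\mathfrak{g})|_U)$ (your $\mathcal{O}_X|_U$), so that $\operatorname{End}(P_U)\simeq U_X(\mathfrak{g})|_U$ follows at once from Theorem \ref{thm:afflie} rather than by passing to colimits. It then checks $P_U|_V \simeq P_V$ via the sifted presentation and the base change $T_L|_V \simeq T_{L|_V}$, just as you anticipate. One correction: the transition maps in $(\operatorname{Pr}^L_k)_{\D(k)/}$ are the restriction functors themselves (already colimit-preserving and carrying $P_U$ to $P_V$), not left adjoints of restriction, so the paper takes the limit of the counit transformation directly, without passing to right adjoints.
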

\begin{proof}
We only prove the ungraded case. We will construct a functor
\begin{align*}
\D(U_X(\mathfrak{g}))\cong \lim_{U \in \operatorname{AffOp}(X)} \D(U_X(\mathfrak{g})|_U) \longrightarrow \lim_{U \in \operatorname{AffOp}(X)} \operatorname{IndCoh}Y_U \cong \operatorname{IndCoh}Y
\end{align*}
by gluing the equivalences constructed in the proof of Theorem \ref{thm:afflie}. Again, consider the following adjunction
\begin{align*}
\theta : \operatorname{Alg}_k 
\mathrel{\substack{
  \longrightarrow\\[-0.7ex]
  \perp\\[-0.7ex]
  \longleftarrow
}}
(\operatorname{Pr}^L_k)_{\D(k)/} : E
\end{align*}
used in the proof of Theorem \ref{thm:loop}. Let $F_U:\D(U_X(\mathfrak{g})|_U) \to \operatorname{IndCoh}Y_U$ be the functor which is constructed in the proof of Theorem \ref{thm:afflie}, and we define $P_U := F_U(U_X(\mathfrak{g})|_U) \in \operatorname{IndCoh}Y_U$. Then $\operatorname{End}(P_U) \cong U_X(\mathfrak{g})|_U$, and hence $E(\operatorname{IndCoh}Y_U, P_U) \cong U_X(\mathfrak{g})|_U$. Moreover, 
\begin{align*}
\operatorname{AffOp}(X) \ni U \longmapsto (\operatorname{IndCoh}Y_U , P_U) \in (\operatorname{Pr}^L_k)_{\D(k)/}
\end{align*}
gives a diagram in $(\operatorname{Pr}^L_k)_{\D(k)/}$. Indeed, for affine open sets $V \subset U \subset X$, $U \cong \operatorname{Spec} A$, we have
\begin{align*}
\operatorname{IndCoh} Y_U &\cong \lim_{L \in K} \operatorname{IndCoh} C_A(L),\\
\operatorname{IndCoh} Y_V &\cong \lim_{L \in K} \operatorname{IndCoh} C_A(L)|_V,
\end{align*}
where $K$ is a sifted diagram in $\operatorname{dgLie}_A^*$. The object $P_U$ is mapped to $T_L \otimes_{U_A(L)} U_X(\mathfrak{g})|_U$ by the functor $\operatorname{IndCoh} Y_U \to \operatorname{IndCoh} C_A(L)$ and
\begin{align*}
(T_L \otimes_{U_A(L)} U_X(\mathfrak{g})|_U)|_V \cong T_{L|_V} \otimes_{U_A(L|_V)} (U_X(\mathfrak{g})|_V),
\end{align*}
where $T_L$ denotes the bimodule defined in (4.2). Thus we have $P_U|_V \cong P_V$. Here, note that !-pullback and ordinary pullback along open immersion coincide through the functor $\operatorname{IndCoh}C_A(L) \to \D(C_A(L))$.

Hence, using the counit map, we can construct a funtor
\begin{align}
\lim_{U\in \operatorname{AffOp}(X)} \D(U_X(\mathfrak{g})|_U)) \cong \lim_{U\in \operatorname{AffOp}(X)} \D(\operatorname{End}(P_U)) \longrightarrow \lim_{U\in \operatorname{AffOp}(X)} \operatorname{IndCoh} Y_U.
\end{align}
Since the functor $\D(U_X(\mathfrak{g})|_U) \to \operatorname{IndCoh}Y_U$ induced by the counit map is a colimit-preserving functor which sends $U_X(\mathfrak{g})|_U$ to $P_U$, this functor coincides with the functor constructed in the proof of Theorem \ref{thm:afflie},
and hence the functor (4.3) gives an equivalence. Thus we obtain an equivalence
\begin{align*}
\D(U_X(\mathfrak{g})) \overset{\sim}{\to} \operatorname{IndCoh}Y.
\end{align*}
 Restricting this to bounded-below subcategories, we also have an equivalence
\begin{align*}
\D^+(U_X(\mathfrak{g})) \overset{\sim}\to \operatorname{IndCoh}^+ Y \cong \D^+(Y).
\end{align*}
\end{proof}
Combining Theorem \ref{thm:loop} and Theorem \ref{thm:main2}, we can relate derived loop spaces and derived formal stacks as follows.
\begin{cor}\label{main3}
Let $X$ be a separated scheme of finite type over $k$. Let $Y=\mathcal{F}_X(\mathbb{T}_{X/k}[-2])$ be a derived formal stack correspoinding to the commutative dg-Lie algebra $\mathbb{T}_{X/k}[-2]$ over $X$. Then we have the following equivalences of $\infty$-categories:
\begin{align*}
\operatorname{IndCoh}\mathcal{L}X &\cong \operatorname{IndCoh}Y, \\
\operatorname{IndCoh}_{gr}\mathcal{L}X &\cong \operatorname{IndCoh}_{gr}Y,\\
\D^+(\mathcal{L}X) &\cong \D^+(Y),\\
\D^+_{gr}(\mathcal{L}X) &\cong \D^+_{gr}(Y).
\end{align*}
Moreover, if we define $Y'=\mathcal{F}_X(\mathbb{T}_{X/k})$ by regarding $\mathbb{T}_{X/k}$ as a commutative dg-Lie algebra over $X$, then we also have the following equivalence:
\begin{align*}
\operatorname{IndCoh}_{gr}\mathcal{L}X &\cong \operatorname{IndCoh}_{gr}Y'.
\end{align*}
\end{cor}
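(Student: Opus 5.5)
The plan is to combine Theorem \ref{thm:loop} with Theorem \ref{thm:main2}, applied to the commutative dg-Lie algebra $\mathfrak{g}=\mathbb{T}_{X/k}[-2]$ (and later $\mathfrak{g}=\mathbb{T}_{X/k}$).

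\textbf{Step 1: enveloping algebras.} The key identification is that for an abelian dg-Lie algebra $\mathfrak{g}$ over $X$, the bracket vanishes. Hence the universal enveloping algebra is $U_X(\mathfrak{g})=T(\mathfrak{g})/(x\otimes y-(-1)^{|x||y|}y\otimes x)=\Sym\mathfrak{g}$. This gives
\begin{align*}
U_X(\mathbb{T}_{X/k}[-2])\cong \Sym(\mathbb{T}_{X/k}[-2]),\qquad U_X(\mathbb{T}_{X/k})\cong \Sym \mathbb{T}_{X/k}.
\end{align*}
I would check this affine locally, where $\mathbb{T}_{X/k}$ is represented via Corollary \ref{cor:cotan} by the dual of a connective complex of finite free modules. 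To use Theorem \ref{thm:main2} we need $\mathfrak{g}$ as an object of $\operatorname{dgLie}_X$; on affines $\operatorname{Spec}A$ this means $\mathfrak{g}|_U$ is a sifted colimit of objects of $\operatorname{dgLie}^*_A$. The dual complex $\X^{\vee}[-1]$, with $\X$ a free resolution of $\mathbb{L}_{A/k}[1]$, is concentrated in positive degrees $\geq 1$, consistent with generators in $\operatorname{dgLie}^*_A$. In characteristic $0$, $U$ commutes with sifted colimits and is compatible with restriction to smaller affines, so the identification glues.

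\textbf{Step 2: the four ungraded/graded equivalences.} Theorem \ref{thm:main2} gives
\begin{align*}
\D(\Sym(\mathbb{T}_{X/k}[-2]))\cong\operatorname{IndCoh}Y,\qquad \D^+(\Sym(\mathbb{T}_{X/k}[-2]))\cong \D^+(Y).
\end{align*}
Theorem \ref{thm:loop} gives
\begin{align*}
\operatorname{IndCoh}\mathcal{L}X\cong \D(\Sym(\mathbb{T}_{X/k}[-2])),\qquad \D^+(\mathcal{L}X)\cong \D^+(\Sym(\mathbb{T}_{X/k}[-2])).
\end{align*}
Composing these yields the first and third equivalences. For the graded versions, give $\mathbb{T}_{X/k}[-2]$ internal degree $-1$, making it a graded dg-Lie algebra. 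Then $U_X$ is the graded symmetric algebra, and the graded parts of both theorems compose in the same way.

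\textbf{Step 3: the statement for $Y'$.} Apply the graded part of Theorem \ref{thm:main2} to $\mathfrak{g}=\mathbb{T}_{X/k}$, with internal degree $-1$, to get $\D_{gr}(\Sym\mathbb{T}_{X/k})\cong\operatorname{IndCoh}_{gr}Y'$. Composing with $\operatorname{IndCoh}_{gr}\mathcal{L}X\cong\D_{gr}(\Sym\mathbb{T}_{X/k})$ from Theorem \ref{thm:loop} finishes the proof. The bounded-below version is not claimed for $Y'$ because the regrading $\mu$ does not preserve cohomological boundedness; it only matches $D^+_{gr}$ with $D^{\searrow}_{gr}$.

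\textbf{Main obstacle.} I expect the hardest point to be Theorem \ref{thm:main2}'s input hypotheses for $\mathbb{T}_{X/k}$ as a sheaf of dg-Lie algebras. Specifically, one must check that the sifted-colimit presentation by free Lie algebras on positively generated free modules can be chosen compatibly with the abelian structure. One must also check that the equivalences of Theorem \ref{thm:loop} and Theorem \ref{thm:main2} are compatible under the gluing over $\operatorname{AffOp}(X)$.

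My first approach would be to note that both equivalences are produced by the same counit construction for the adjunction $(\theta,E)$. In each case the distinguished generator is $\str|_U$, and the endomorphism algebras agree: $\operatorname{REnd}(\str|_U)\cong \Sym(\mathbb{T}[-2])|_U$ on both sides. Consequently the composite is itself induced by a morphism of diagrams in $(\operatorname{Pr}^L_k)_{\D(k)/}$, and the limits match.
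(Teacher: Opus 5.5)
Your proposal is correct and is essentially the paper's proof: identify $U_X(\mathfrak{g})=\Sym\mathfrak{g}$ for the abelian dg-Lie algebras $\mathbb{T}_{X/k}[-2]$ and $\mathbb{T}_{X/k}$, then compose the equivalences of Theorem~\ref{thm:loop} with those of Theorem~\ref{thm:main2}, using the graded versions for the graded statements and for $Y'$. The gluing-compatibility and presentation issues you flag as the main obstacle do not actually need to be checked here. The corollary only asks for a composite of two global equivalences, and Theorem~\ref{thm:main2} is stated for an arbitrary dg-Lie algebra over $X$.
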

\begin{proof}
If we regard $\mathbb{T}_{X/k}[-2]$ as a commutative dg-Lie algebra, we have $U_X(\mathbb{T}_{X/k}[-2]) = \Sym(\mathbb{T}_{X/k}[-2])$. Then we obtain
\begin{align*}
\operatorname{IndCoh}\mathcal{L}X \cong \D(\Sym(\mathbb{T}_{X/k}[-2])) \cong \D(U_X(\mathbb{T}_{X/k}[-2])) \cong \operatorname{IndCoh}Y,
\end{align*}
by Theorem \ref{thm:loop} and Theorem \ref{thm:main2}. Similarly, we also have
\begin{align*}
\operatorname{IndCoh}_{gr}\mathcal{L}X \cong \D_{gr}(\Sym (\mathbb{T}_{X/k}[-2])) \cong \D_{gr}(\Sym \mathbb{T}_{X/k}) \cong \D_{gr}(U_X(\mathbb{T}_{X/k})) \cong \operatorname{IndCoh}_{gr}Y'.
\end{align*}
\end{proof}

\end{document}